\documentclass[11pt,reqno]{amsart}

\usepackage{amsmath}
\usepackage{amsthm}
\usepackage{graphicx}
\usepackage{amsfonts}
\usepackage{amssymb}
\usepackage{hyperref}
\usepackage{bm}
\usepackage[margin=1.25in]{geometry}
\usepackage{enumerate}
\parindent=.25in

\numberwithin{equation}{section}

\newtheorem{theorem}{Theorem}[section]
\newtheorem{lemma}[theorem]{Lemma}
\newtheorem{proposition}[theorem]{Proposition}
\newtheorem{corollary}[theorem]{Corollary}

\theoremstyle{definition}
\newtheorem{example}[theorem]{Example}
\newtheorem{definition}[theorem]{Definition}
\newtheorem{remark}[theorem]{Remark}

\def\E{{\mathbb E}}

\def\R{{\mathbb R}}
\def\N{{\mathbb N}}
\def\PP{{\mathbb P}}
\def\FF{{\mathbb F}}
\def\P{{\mathcal P}}
\def\B{{\mathcal B}}

\def\W{{\mathcal W}}

\def\F{{\mathcal F}}
\def\tr{{\mathrm{Tr}}}

\def\C{{\mathcal C}}
\def\transpconst{\gamma}
\def\tailfn{\mathrm{S}}

\title[Hierarchies, entropy, and propagation of chaos]{Hierarchies, entropy, and quantitative propagation of chaos for mean field diffusions}
\author{Daniel Lacker}
\address{Department of Industrial Engineering \& Operations Research, Columbia University}
\email{daniel.lacker@columbia.edu}
\thanks{This work was partially supported by the Air Force Office of Scientific Research Grant FA9550-19-1-0291.}

\setcounter{tocdepth}{1}

\begin{document} 

\begin{abstract}
This paper develops a non-asymptotic, \emph{local} approach to quantitative propagation of chaos for a wide class of mean field diffusive dynamics. For a system of $n$ interacting particles, the relative entropy between the marginal law of $k$ particles and its limiting product measure is shown to be $O((k/n)^2)$ at each time, as long as the same is true at time zero. A simple Gaussian example shows that this rate is optimal. The main assumption is that the limiting measure obeys a certain functional inequality, which is shown to encompass many potentially irregular but not too singular finite-range interactions, as well as some infinite-range interactions. This unifies the previously disparate cases of Lipschitz versus bounded measurable interactions, improving the best prior bounds of $O(k/n)$ which were deduced from \emph{global} estimates involving all $n$ particles. We also cover a class of models for which qualitative propagation of chaos and even well-posedness of the McKean-Vlasov equation were previously unknown. At the center of a new approach is a differential inequality, derived from a form of the BBGKY hierarchy, which bounds the $k$-particle entropy in terms of the $(k+1)$-particle entropy.
\end{abstract}

\maketitle


\section{Introduction}

Consider the interacting stochastic dynamics 
\begin{align}
dX^{n,i}_t = \bigg(b_0(X^{n,i}_t) + \frac{1}{n-1}\sum_{j \neq i}b(X^{n,i}_t,X^{n,j}_t)\bigg)dt + dW^i_t, \qquad i=1,\ldots,n, \label{def:introSDE}
\end{align}
where $W^1,\ldots,W^n$ are independent $d$-dimensional Brownian motions. Precise assumptions on the confinement and interaction functions $b_0 : \R^d \to \R^d$ and $b : \R^d \times \R^d \to \R^d$ will be given in Section \ref{se:intro-dyn}, and we also treat higher-order (non-pairwise) interactions in Section \ref{se:intro-infrange}. In this introduction, assume for simplicity that all relevant stochastic differential equations (SDEs) admit unique solutions.

The $n\to\infty$ behavior of the SDE systems \eqref{def:introSDE} is typically described in terms of the McKean-Vlasov equation
\begin{align}
dX_t =  \big(b_0(X_t) + \langle \mu_t, b(X_t,\cdot)\rangle \big)\,dt +  dW_t, \qquad \mu_t = \mathrm{Law}(X_t), \label{def:introMV}
\end{align}
or by the corresponding nonlinear Fokker-Planck equation
\begin{align*}
\partial_t\mu_t(x) = -\mathrm{div}\big( \big(b_0(x)+  \langle \mu_t, b(x,\cdot)\rangle\big)\,\mu_t(x)\big) + \tfrac12 \Delta \mu_t(x). 
\end{align*}

The precise connection between the $n$-particle system \eqref{def:introSDE} and the McKean-Vlasov equation \eqref{def:introMV} was first studied in the context of interacting diffusions by McKean \cite{mckean1967propagation}, using Kac's formalism of (molecular) \emph{chaos} \cite{kac1956foundations}, also called \emph{the Boltzmann property}.
One expects to have \emph{propagation of chaos}, in the following sense.
Write $\P(E)$ for the set of probability measures on a Polish space $E$, always equipped with the topology of weak convergence. 
Let $P^{(n,k)}_t \in \P((\R^d)^k)$ denote the law of $(X^{n,1}_t,\ldots,X^{n,k}_t)$, for $1 \le k \le n$ and $t \ge 0$, where $(X^{n,1},\ldots,X^{n,n})$ solves \eqref{def:introSDE}.
 Suppose the initial distribution $P^{(n,n)}_0$ is exchangeable and \emph{$\mu_0$-chaotic}, in the sense that
\begin{align}
P^{(n,k)}_0 \to \mu^{\otimes k}_0 \text{ as } n\to\infty, \quad \text{for each } k \in \N,\label{eq:intro-propagation-0}
\end{align}
where $^{\otimes k}$ denotes the $k$-fold tensor product.
Then the same is true at any later time:
\begin{align}
P^{(n,k)}_t  \to \mu^{\otimes k}_t \text{ as } n\to\infty, \quad \text{for each } k \in \N, \ \ t > 0. \label{eq:intro-propagation}
\end{align}
This notion of chaos admits a well known reformulation in terms of the empirical measures $L^n_t := \frac{1}{n}\sum_{i=1}^n\delta_{X^{n,i}_t}$. For any $t \ge 0$, the following are equivalent \cite[Proposition 2.2]{sznitman1991topics}:
\begin{enumerate}[{\ \ }(a)]
\item $P^{(n,k)}_t \to \mu^{\otimes k}_t$ as $n\to\infty$, for each $k \in \N$.
\item $L^n_t \to \mu_t$ in probability as $n\to\infty$.
\end{enumerate}

Interacting SDE systems of the form \eqref{def:introSDE} and their $n\to\infty$ behavior arise in a wide range of theoretical and applied contexts, reaching far beyond their origins in kinetic theory. 
A classic reference is \cite{sznitman1991topics}, and see \cite{jabinwang-review} for a survey of more recent developments with an eye toward models arising in the physical sciences.
This short introduction can hardly do justice to the vast literature, but we mention two notable and active recent directions beyond those discussed in \cite{sznitman1991topics,jabinwang-review}:
McKean-Vlasov models feature prominently in recent work on neural networks, as scaling limits of stochastic gradient descent algorithms \cite{mei2018mean,rotskoff2018neural,sirignano2018mean}, and the recent literature on mean field game theory \cite{lasry-lions,huang2006large,carmona-delarue} has inspired new theoretical developments and applications in engineering and economics.

\emph{Qualitative} propagation of chaos, i.e., the implication \eqref{eq:intro-propagation-0} $\Rightarrow$ \eqref{eq:intro-propagation}, has been established for many different kinds of dynamics. The main contribution of this paper is to develop a new, non-asymptotic approach to \emph{quantitative} propagation of chaos, for which the literature is more limited.  
The key assumptions and the method are summarized in Sections \ref{se:intro:mainresults} and \ref{se:ideas} below, with precise definitions and statements given in Section \ref{se:mainresults}.
Informally, we show that
\begin{align}
H(P^{(n,k)}_0 \,|\, \mu^{\otimes k}_0) = O((k/n)^2) \ \ \Longrightarrow \ \ H(P^{(n,k)}_t \,|\, \mu^{\otimes k}_t) = O((k/n)^2) \ \ \ \forall t > 0, \label{eq:intro-entropy-new}
\end{align}
where $H$ denotes relative entropy.
By Pinsker's inequality, this implies a convergence rate in a genuine metric,
\begin{align}
\|P^{(n,k)}_t  - \mu^{\otimes k}_t\|_{\mathrm{TV}} = O(k/n), \label{eq:intro-TV-new}
\end{align}
where $\|\cdot\|_{\mathrm{TV}}$ denotes total variation.
This is contrary to an apparently widely held belief that $\|P^{(n,k)}_t  - \mu^{\otimes k}_t\|_{\mathrm{TV}} = O(\sqrt{k/n})$ is optimal \cite[Remark 4]{jabin-wang-W1inf}.
An explicitly solvable Gaussian case, Example \ref{ex:gaussian-dyn}, shows that the rate $(k/n)^2$ in \eqref{eq:intro-entropy-new} cannot be improved, even if $P^{(n,n)}_0=\mu_0^{\otimes n}$.
In fact, our main results are stronger, stated in terms of relative entropy between measures on path space, but we avoid introducing the requisite additional notation here.

First, we stress that the new method is purely \emph{local}, working directly with the distance between $P^{(n,k)}_t$ and $\mu^{\otimes k}_t$. No non-trivial global estimates are needed.
The equivalence (a) $\Leftrightarrow$ (b) mentioned above is \emph{qualitative} and does not appear to admit a natural \emph{quantitative} version, even once one fixes a preferred metric with which to quantify the distance between $P^{(n,k)}_t$ and $\mu^{\otimes k}_t$ in (a), or between $L^n_t$ and $\mu_t$ in (b). For this reason, we will distinguish (a) as \emph{local chaos}, as it pertains to finitely many coordinates at a time, versus (b) as \emph{global chaos}, as it involves the full configuration. 
In contrast to our purely local approach, prior work has
deduced \emph{local} estimates from \emph{global} ones, relying on estimating either a distance between the full configurations $P^{(n,n)}_t$ and $\mu^{\otimes n}_t$, or expected distances between $L^n_t$ and $\mu_t$. But, as we will see, local estimates deduced from global ones are often suboptimal.

For example, for Lipschitz coefficients $(b_0,b)$, the popular (synchronous) coupling method introduced by Sznitman \cite{sznitman1991topics} (used also in \cite{meleard1996asymptotic,malrieu2001logarithmic,
malrieu2003convergence} and many other works) allows one to prove a global estimate of the form $\W_2(P^{(n,n)}_t,\mu^{\otimes n}_t) = O(1)$, as long as the same is true at time $t=0$, where $\W_2$ denotes the quadratic Wasserstein distance (defined in Section \ref{se:mainresults}).
The same coupling argument also yields the local estimate
\begin{align}
\W_2^2(P^{(n,k)}_t,\mu^{\otimes k}_t) = O(k/n). \label{ineq:intro:W2}
\end{align}
Alternatively, this follows from the aforementioned global estimate and a well known subadditivity inequality \cite[Proposition 2.6(i)]{hauray2014kac}
\begin{align}
\W_2^2(P^{(n,k)}_t,\mu^{\otimes k}_t) \le \frac{2k}{n} \W_2^2(P^{(n,n)}_t,\mu^{\otimes n}_t). \label{def:subadditivity-W}
\end{align}
Although our main result \eqref{eq:intro-entropy-new} is for relative entropy, we will see that it can be used along with transport inequalities to improve the Wasserstein estimate \eqref{ineq:intro:W2} to $O((k/n)^2)$ in the case of Lipschitz coefficients.

In a different direction, for bounded coefficients $(b_0,b)$, the (techniques of the) more recent papers \cite{benarous-zeitouni,jabin-wang-bounded,jabir2019rate} establish  global relative entropy estimates of the form
\begin{align}
H(P^{(n,n)}_t \,|\, \mu^{\otimes n}_t) = O(1), \label{def:intro:entropy-global}
\end{align}
as long as the same is true at $t=0$.
See also \cite[Section 5.2.1]{malrieu2001logarithmic} for a derivation of \eqref{def:intro:entropy-global} in the Lipschitz case.
From the well known subadditivity inequality  \cite[Lemma 3.9]{del2001genealogies},
\begin{align}
H(P^{(n,k)}_t\,|\,\mu^{\otimes k}_t) \le \frac{2k}{n} H(P^{(n,n)}_t\,|\,\mu^{\otimes n}_t), \label{def:subadditivity-H}
\end{align}
and Pinsker's inequality, one deduces the local estimates
\begin{align}
H(P^{(n,k)}_t \,|\, \mu^{\otimes k}_t) = O(k/n), \qquad \|P^{(n,k)}_t - \mu^{\otimes k}_t \|_{\mathrm{TV}} = O(\sqrt{k/n}), \label{eq:intro-H/TV-old}
\end{align}
which are of course worse than \eqref{eq:intro-entropy-new} and \eqref{eq:intro-TV-new}.

Notably, these quantitative estimates all reveal that the distance between $P^{(n,k)}_t$ and $\mu^{\otimes k}_t$ vanishes even when $k$ increases with $n$, as long as $k=o(n)$. This is sometimes called \emph{increasing propagation of chaos} \cite{benarous-zeitouni,del2001genealogies} or the \emph{size of chaos} \cite{paul2019size}. Further discussion of related literature on quantitative propagation of chaos is postponed to Section \ref{se:intro:literature}.

\subsection{Discussion of main results} \label{se:intro:mainresults}

The key ingredient in our method is an a priori functional inequality for the solution $\mu$ of the McKean-Vlasov equation \eqref{def:introMV}:
\begin{align}
\exists \transpconst < \infty \ \ \text{s.t.} \ \ \sup_{x \in \R^d}\big|\langle \nu-\mu_t,b(x,\cdot)\rangle\big|^2 \le \transpconst H(\nu\,|\,\mu_t), \quad \forall t \in (0,T), \ \nu \in \P(\R^d). \label{intro:transport}
\end{align}
This is equivalent (see Lemma \ref{le:integ-transp-equiv}) to the exponential integrability condition
\begin{align}
\exists c > 0 \ \text{ s.t. } \sup_{(t,x) \in (0,T) \times \R^d}\int_{\R^d} \exp\left( c|b(x,y)-\langle\mu_t, b(x,\cdot)\rangle|^2\right)\,\mu_t(dy) < \infty. \label{intro:expintegrability}
\end{align}
Our main Theorem \ref{th:intro-dynamic} imposes \eqref{intro:transport} as an assumption, and various  tractable sufficient conditions are given in Section \ref{se:intro-dyn}, including cases for which not even qualitative propagation of chaos was previously known (see Theorem \ref{th:sublinear}).

Kantorovich duality gives rise to an enlightening sufficient condition for \eqref{intro:transport}, which is to assume that there exists a metric $\rho$ on $\R^d$ for which the following two properties hold:
\begin{enumerate}[(i)]
\item There exists $\transpconst < \infty$ such that $\mu$ satisfies the transport inequality $\W_{1,\rho}^2(\cdot,\mu_t) \le \transpconst H(\cdot\,|\,\mu_t)$ for all $t$, where the 1-Wasserstein distance $\W_{1,\rho}$ is defined using $\rho$.
\item $b(x,\cdot)$ is Lipschitz with respect to $\rho$, uniformly in $x$.
\end{enumerate}
For coefficients $(b_0,b)$ which are Lipschitz with respect to the usual Euclidean metric  $\rho(x,y)=|x-y|$, it can be shown that $\mu$ satisfies a \emph{quadratic} transport inequality, which is stronger than (i). This leads to Corollary \ref{co:Lipschitz-dyn}.
Moreover, the $k$-tensorized form of this quadratic transport inequality combines with \eqref{eq:intro-entropy-new} to yield 
\begin{align}
\W_2^2(P^{(n,k)}_t,\mu^{\otimes k}_t) = O((k/n)^2), \label{intro:rate:W2}
\end{align}
which improves \eqref{ineq:intro:W2}.
Alternatively, if $b$ is bounded, then it is Lipschitz with respect to the trivial metric $\rho(x,y)=1_{x \neq y}$. In this case. $\W_{1,\rho}$ becomes the total variation distance, and the transport inequality (i) is just Pinsker's inequality. See Corollary \ref{co:bounded}.

This new technique seems versatile enough to adapt to a variety of situations.
In Section \ref{se:rev} we illustrate similar estimates on the \emph{reversed} entropy $H(\mu^{\otimes k}_t\,|\,P^{(n,k)}_t)$, which admits a simpler proof but under more restrictive assumptions. Section \ref{se:intro-infrange} adapts the approach to  settings with more general interactions, not necessarily pairwise. We discuss in Remarks \ref{re:vlasov1} and \ref{re:vlasov2} how to handle \emph{kinetic} Fokker-Planck equations, with no additional effort.
In the companion paper \cite{lacker-static}, we develop similar optimal rates of local chaos for the invariant (Gibbs) measures of diffusions like \eqref{def:introSDE}.
It is challenging  but possible to adapt the method to obtain estimates which are \emph{uniform in time}, and will be the subject of future work.

The optimal rate in \eqref{eq:intro-entropy-new} is interesting from a theoretical standpoint, and it also has the concrete implication of leading to an optimal rate of convergence for the law of large numbers for linear statistics. Indeed, Pinsker's inequality and \eqref{eq:intro-entropy-new} imply $\|P^{(n,2)}_t - \mu^{\otimes 2}_t \|_{\mathrm{TV}} = O(1/n)$, and one can then easily show that, for bounded measurable functions $f$,
\begin{align*}
\int\bigg(\frac{1}{n}\sum_{i=1}^n f(x_i) - \langle \mu_t,f\rangle\bigg)^2 P^{(n,n)}_t(dx) &= O(1/n).
\end{align*}
The weaker estimate $\|P^{(n,2)}_t - \mu^{\otimes 2}_t \|_{\mathrm{TV}} = O(1/\sqrt{n})$ would merely yield $O(1/\sqrt{n})$ here.

\subsection{Outline of the proof} \label{se:ideas}

The starting point of the new technique is the (finite) BBGKY hierarchy. The marginal flow $(P^{(n,k)}_t)_{t \ge 0}$ can be characterized as the solution of its own Fokker-Planck equation, or alternatively as the law of a solution of a $k$-particle SDE system, for each $1 \le k \le n$. The drift $\widehat{b}^k=(\widehat{b}^k_1,\ldots,\widehat{b}^k_k)$ of this $k$-particle SDE is obtained as a conditional expectation,
\begin{align*}
\widehat{b}^k_i(t,x_1,\ldots,x_k) &= \E\bigg[b_0(X^{n,i}_t) +  \frac{1}{n-1}\sum_{j \neq i} b(X^{n,i}_t,X^{n,j}_t)\,\Big|\,X^{n,1}_t=x_1,\ldots,X^{n,k}_t=x_k\bigg] \\
	&= b_0(x_i) + \frac{1}{n-1}\sum_{j \le k, \, j \neq i}b(x_i,x_j) + \frac{n-k}{n-1} \langle P^{(k+1|k)}_{t,(x_1,\ldots,x_k)}, b(x_i,\cdot)\rangle,
\end{align*}
where $P^{(k+1|k)}_{t,x}$ denotes the conditional law of $X^{k+1}_t$ given $(X^1_t,\ldots,X^k_t)=x$, and the simplification in the last step owes to exchangeability. This is a \emph{hierarchy} in the sense that the equation for of $P^{(n,k)}_t$ depends on $P^{(n,k+1)}_t$ for each $k$.
The drift $\widehat{b}^k$ can be derived via PDE computations, or from a stochastic perspective using the \emph{mimicking theorem} for It\^o processes \cite{gyongy1986mimicking,brunick2013mimicking}. 

We then employ a well known relative entropy estimate for diffusions related to Girsanov's theorem, used in many prior studies \cite{malrieu2001logarithmic,jabin-wang-bounded,jabin-wang-W1inf,jabir2019rate,lim2020quantitative}  to estimate the \emph{global} quantity $H(P^{(n,n)}_t\,|\,\mu^{\otimes n}_t)$ but apparently not the \emph{local} quantities $H^k_t := H (P^{(n,k)}_t \,|\, \mu^{\otimes k}_t )$. The entropy estimate yields essentially
\begin{align*}
\frac{d}{dt}H^k_t &\le \frac12\sum_{i=1}^k \int_{(\R^d)^k} \left| \widehat{b}^k_i(t,x) - b_0(x_i) - \langle \mu_t,b(x_i,\cdot)\rangle\right|^2\,P^{(n,k)}_t(dx).
\end{align*}
(In fact, this  becomes equality when one works with measures on path space.)
Simple manipulations using exchangeability lead to 
\begin{align}
\frac{d}{dt}H^k_t &\le \sum_{i=1}^k \int_{(\R^d)^k} \bigg| \frac{1}{n-1}\sum_{j \le k, \, j \neq i} \big(b(x_i,x_j) - \langle \mu_t,b(x_i,\cdot)\rangle \big)\bigg|^2\,P^{(n,k)}_t(dx) \label{intro:term1} \\
	&\quad +   \sum_{i=1}^k \int_{(\R^d)^k} \left|\frac{n-k}{n-1}\langle P^{(k+1|k)}_{t,x} - \mu_t, b(x_i,\cdot)\rangle \right|^2 \,P^{(n,k)}_t(dx) \nonumber \\
	&\le \frac{k(k-1)^2}{(n-1)^2}M + k \int_{(\R^d)^k} \left|\langle P^{(k+1|k)}_{t,x} - \mu_t, b(x_1,\cdot)\rangle \right|^2 \, P^{(n,k)}_t(dx), \label{intro:term2}
\end{align}
where we make the assumption that
\begin{align*}
M := \sup_{n,t}\int_{(\R^d)^2}|b(x_1,x_2) - \langle\mu_t, b(x_1,\cdot)\rangle|^2\,P^{(n,2)}_t(dx_1,dx_2) < \infty.
\end{align*}
Using the key assumption \eqref{intro:transport}, the last term in \eqref{intro:term2} is bounded by
\begin{align*}
\transpconst k \int_{(\R^d)^k} H\big( P^{(k+1|k)}_{t,x} \,|\, \mu_t\big) \, P^{(n,k)}_t(dx) = \transpconst k \big(H^{k+1}_t - H^k_t\big),
\end{align*}
with the last step following from the chain rule for relative entropy. Combining the last two estimates yields the following differential inequality, which is the heart of the argument:
\begin{align}
\frac{d}{dt}H^k_t &\le \frac{k(k-1)^2}{(n-1)^2}M + \transpconst k \big(H^{k+1}_t - H^k_t\big). \label{idea:diffeq-entropy}
\end{align}
Applying Gronwall's inequality, this implies
\begin{align*}
H^k_t &\le e^{- \transpconst k t}H^k_0 + \int_0^t e^{-\transpconst k (t - s)} \left( \frac{k(k-1)^2}{(n-1)^2}M + \transpconst k  H^{k+1}_s \right) \, ds. 
\end{align*}
The rest of the proof is a careful iteration of this inequality in the spirit of Picard, bounding the final term with the crude global estimate $H^n_t \le H^n_0 + nMt$. The resulting estimate of $H^k_t$ involves nested exponential integrals, which require some care to estimate effectively.

Actually, this argument at first yields only the suboptimal $H^k_t=O(k^3/n^2)$, because of the suboptimal factor $k(k-1)^2$ in  the variance-like $M$ term. The $(k-1)^2$ comes from bounding both the diagonal and cross-terms crudely by $O(1)$ upon expanding the square in  \eqref{intro:term1}, ignoring any cancellations. However, once we know that $H^3_t=O(1/n^2)$, we can show that the cross-terms are each $O(1/n)$, and we ultimately improve the factor in front of $M$ to $O(k^2/n^2)$. Repeating the rest of the argument leads to the optimal $O(k^2/n^2)$. This is where there is an advantage in working with the \emph{reversed} entropy $H(\mu^{\otimes k}_t\,|\,P^{(n,k)}_t)$, discussed in Section \ref{se:rev}: The argument is mostly the same, except that the integrals are with respect to $\mu^{\otimes k}_t$ instead of $P^{(n,k)}_t$, and thus the cross-terms vanish immediately by independence.

\subsection{Related literature} \label{se:intro:literature}

Quantitative propagation of chaos has advanced significantly in recent years. The classical coupling approach of \cite{sznitman1991topics} discussed above still sees useful refinements, such as the more sophisticated \emph{reflection coupling} technique of \cite{durmus2020elementary} which yields a form of \eqref{ineq:intro:W2} which is uniform in time. Coupling methods have an advantage of easily accommodating higher-order interactions (even in the diffusion coefficients), which our approach does not yet handle as seamlessly.

Relative entropy methods  appear to be more recent  in the analysis of mean field limits.
As discussed above, prior work  \cite{jabin-wang-bounded,jabin-wang-W1inf,jabir2019rate,lacker2018strong,lim2020quantitative}
has focused on establishing \emph{global} entropy estimates like \eqref{def:intro:entropy-global}, with local estimates like \eqref{eq:intro-H/TV-old} deduced from subadditivity.
The main novelty of our approach is to develop \emph{local} relative entropy estimates, which allow us to improve \eqref{eq:intro-H/TV-old} to \eqref{eq:intro-entropy-new}. It is worth highlighting that the aforementioned papers all involve establishing some form of a priori exponential integrability similar in spirit to \eqref{intro:expintegrability}, some of which also incorporate a tradeoff between integrability of the interaction $b$ and of derivatives of the solution $\mu_t$ of the McKean-Vlasov equation.
Most notably, the breakthrough work \cite{jabin-wang-W1inf} deepened the global entropy approach to cover a broad class of singular interactions appearing in important physical models, using an exponential integrability result that takes full advantage of the smoothing effects of the noise. See \cite{bresch2019mean} for further developments of these ideas, as well as \cite{jabir2018mean,tomasevic2020propagation} which use an interesting \emph{local} Girsanov argument to prove \emph{qualitative} propagation of chaos for certain singular interactions.
All but the most mild singularities are apparently out of reach of our main results because of the nature of our exponential integrability condition \eqref{intro:expintegrability}.
It is natural to wonder, though, if our general strategy of comparing the $k$-particle ``energy" to the $(k+1)$-particle ``energy" via the BBGKY hierarchy could be adapted for different kinds of dynamics and perhaps better-tailored notions of ``energies," rather than entropy, such as the modulated (free) energy used in \cite{serfaty2020mean,duerinckx2016mean,bresch2019mean}.

Yet another powerful approach to quantitative propagation of chaos emerged recently, based on semigroup analysis, stemming from the important recent work \cite{mischler2013kac} (see also \cite{kolokoltsov2010nonlinear} for related ideas). The rough idea is to compare the evolutions of the $n$-particle empirical measure $(L^n_t)_{t \ge 0}$ and its candidate limit $(\mu_t)_{t \ge 0}$ using stability properties of the associated (nonlinear) semigroups, which act on functionals of $\P(\R^d)$.
This approach is extremely versatile, applying to a wide variety of dynamics.
Closest to our diffusion setting is \cite[Section 6]{mischler2015new}, which gives rates of convergence for quantities like $\langle P^{(n,k)}_t-\mu^{\otimes k}_t, f\rangle$ for sufficiently smooth test functions $f$. However, the estimate is limited by the mean rate of convergence of i.i.d.\ empirical measures in Wasserstein distance, which is well known (see \cite{fournier2015rate}) to deteriorate with the dimension $d$.

Using arguments which are similar in spirit to the semigroup approach but more tailored to the diffusive setting, a few recent papers have identified the optimal $O(1/n)$ rate of so-called \emph{weak error} in the case that $k$ is fixed as $n\to\infty$.
For sufficiently smooth coefficients, it is shown in \cite[Theorem 1.2]{bencheikh2019bias} that $|\langle P^{(n,1)}_t-\mu_t, f\rangle|=O(1/n)$ for $C^2_b$ test functions $f$.
For Lipschitz coefficients, this is extended in \cite[Theorem 2.17]{chassagneux2019weak} to $|\E[F(L^n_t)]-F(\mu_t)| = O(1/n)$ for nonlinear but sufficiently smooth functionals $F : \P(\R^d) \to \R$, by taking advantage of an appropriate differential calculus on $\P(\R^d)$;
the very recent paper \cite{delarue-tse} deepens these ideas to derive estimates which are uniform in time.
Closest to our setting, with merely H\"older-continuous coefficients, \cite[Theorem 3.5]{frikha2019backward} obtains $\|P^{(n,1)}_t - \mu_t\|_{\mathrm{TV}} = O(1/n)$, in fact with pointwise estimates between the densities, by exploiting the regularizing effect of noise on the backward Kolmogorov equation associated with $\mu$.
Compared to these works, the present paper has the advantages of obtaining the optimal rate of chaos not only in $n$ but also in $k$, using the stronger ``distance" relative entropy,  applying to more irregular coefficients (except that there can be no interaction in the diffusion coefficient), and being arguably simpler to implement.

\subsubsection{From global to local}
Aside from subadditivity estimates like \eqref{def:subadditivity-W} and \eqref{def:subadditivity-H}, there is an alternative approach to deducing local estimates from global ones employed in \cite{mischler2013kac,hauray2014kac,mischler2015new}, which also appears to fall inevitably short of our optimal local estimates. The idea, instead of comparing $P^{(n,k)}_t$ directly to $\mu^{\otimes k}_t$, is to instead compare  $P^{(n,k)}_t$ to the measure $\widetilde{P}^{(n,k)}_t$ defined as the mean of the $k$-fold product of the empirical measure:
\begin{align}
\widetilde{P}^{(n,k)}_t := \int_{(\R^d)^n} L_n^{\otimes k} \,dP^{(n,n)}_t, \qquad L_n(x) := \frac{1}{n}\sum_{i=1}^n\delta_{x_i}. \label{intro:Ln}
\end{align}
One may interpret $\widetilde{P}^{(n,k)}_t$ (resp.\  $P^{(n,k)}_t$) as the joint law of $k$ particles drawn at random from the $n$ \emph{with} (resp.\ \emph{without}) replacement.
A combinatorial argument due to Diaconis-Freedman \cite{diaconis1980finite} shows that $\|P^{(n,k)}_t-\widetilde{P}^{(n,k)}_t\|_{\mathrm{TV}} \le k(k-1)/n$, which is sharp in general but fails to achieve our desired result of $O(k/n)$.
Our result \eqref{eq:intro-TV-new} thus suggests that $P^{(n,k)}_t$ is closer to $\mu^{\otimes k}_t$ than it is to $\widetilde{P}^{(n,k)}_t$, at least for large $k$.
More importantly, applying the triangle inequality, one must still estimate the distance between $\widetilde{P}^{(n,k)}_t$ and $\mu^{\otimes k}_t$, which is even more costly. The natural approach of \cite[Theorem 2.4]{hauray2014kac} works with the first-order Wasserstein distance $\W_1$, using convexity of $\W_1(\cdot,\mu_t)$ and a tensorization identity:
\begin{align*}
\W_1(\widetilde{P}^{(n,k)}_t,\mu^{\otimes k}_t) &\le \int_{(\R^d)^n} \W_1(L_n^{\otimes k},\mu^{\otimes k}_t)\,dP^{(n,n)}_t = k \int_{(\R^d)^n} \W_1(L_n,\mu_t)\,dP^{(n,n)}_t.
\end{align*}
The right-hand side in general is no better than $O(k/n^{1/(d \vee 2)})$ by \cite{fournier2015rate}, which is worse than our estimate of $O(k/n)$ in \eqref{intro:rate:W2}.
Similar ideas appear in \cite{mischler2013kac,mischler2015new}, with an additional approximation between the generators of the $n$-particle and limiting dynamics.

It is worth emphasizing here that our main result \eqref{eq:intro-entropy-new} is \emph{dimension-free}, in the sense that the dimension of the underlying state space $\R^d$ does not appear in the exponent, but only potentially in the constant hidden by the big-O notation $O((k/n)^2)$. The hidden constant is typically $O(d)$, as can be seen in Corollaries \ref{co:bounded} and \ref{co:Lipschitz-dyn}.
Global estimates of the expected rate of convergence of empirical measures invariably deteriorate with $d$ as mentioned in the previous paragraph (unless perhaps one is more careful about the choice of metric \cite{han2021class}), and for this reason it seems unlikely that our optimal local estimates could be deduced from estimates on empirical measures.

\subsubsection{The BBGKY hierarchy}
The BBGKY hierarchy, described in Section \ref{se:ideas} and Remark \ref{re:BBGKY} below, has been the basis for \emph{qualitative} studies of propagation of chaos, in certain contexts, but apparently not many \emph{quantitative} studies.
It does not appear to be well-documented for diffusive models, but we refer to \cite[Section 1.5]{golse2016dynamics} and \cite{golse2013empirical} for thorough and lucid introduction to the noiseless setting. 
Using the BBGKY hierarchy for even a qualitative proof of propagation of chaos is no trivial task; see \cite{spohn1981vlasov} for the noiseless setting and \cite{lanford1975time} for Lanford's celebrated derivation of the Boltzmann equation.
Beyond this paragraph, we will make no attempt to summarize the extensive literature on propagation of chaos for Boltzmann-type dynamics, wherein hierarchical methods feature more prominently.
We highlight the recent works \cite{pulvirenti2017boltzmann,paul2019size}, which derive quantitative local propagation of chaos using a hierarchical approach based on the so-called \emph{correlation errors}, also known as \emph{v-functions} \cite{demasi2006mathematical}, which are the signed measures defined by $E^k_t := \int  (L_n -\mu_t)^{\otimes k} \,dP^{(n,n)}_t$ with $L_n$ as in \eqref{intro:Ln}. 
This approach bears a philosophical resemblance to ours, in the sense that both involve iterating a BBGKY-type hierarchy to estimate a quantity which controls the $k$-particle correlations.

\subsection{Outline of the paper}

Section \ref{se:mainresults} presents all of the main results in detail. Section \ref{se:gaussian} gives an explicit Gaussian example, which shows that the rate $(k/n)^2$ is optimal. The remaining Sections \ref{se:proofs-dyn}--\ref{se:proofs-inf} and appendices give the proofs.

\section{Main results} \label{se:mainresults}

This section states the general results of the paper in full detail. We begin in Section \ref{se:intro-dyn} with pairwise interactions, and Section \ref{se:intro-infrange} extends the ideas to more general interactions.

\subsection{Notation}
Recall that a probability measure on a product space $E^n$ is said to be \emph{exchangeable} if it is invariant with respect to permutations of the $n$ coordinates.
We use the usual notation $\langle \nu,f\rangle = \int f\,d\nu$ for integration of a $\nu$-integrable function $f$ with respect to a measure $\nu$. 
For a real-valued function $f$ on any set $E$ we write $\|f\|_\infty := \sup_{x \in E}|f(x)|$.

\subsubsection{Distances of probability measures}
The  relative entropy between two probability measures $(\nu,\nu')$ on a common measurable space is defined as usual by
\begin{align*}
H(\nu\,|\,\nu') = \int \frac{d\nu}{d\nu'}\log\frac{d\nu}{d\nu'}\,d\nu', \ \ \text{ if } \nu \ll \nu', \ \text{ and } \ \  H(\nu\,|\,\nu')= \infty \ \ \text{otherwise}.
\end{align*}
For a metric space $(E,\rho)$ and $q \ge 1$, define the Wasserstein distance $\W_q$ by
\begin{align*}
\W_{q,\rho}^q(\nu,\nu') := \inf_\pi \int_{E \times E} \rho^q(x,y)\,\pi(dx,dy) ,
\end{align*}
where the infimum is over all $\pi \in \P(E \times E)$ with marginals $\nu$ and $\nu'$.
We typically abbreviate $\W_q=\W_{q,\rho}$, as the metric will be understood from context.
Note that there is no normalization by the dimension in our definitions of $H$ and $\W_p$, in contrast to \cite{jabin-wang-bounded,jabin-wang-W1inf,lim2020quantitative,hauray2014kac}.

\subsubsection{Path space} \label{se:pathspacenotation}
We fix throughout the paper a dimension $d \in \N$ and time horizon $T > 0$. 
We will allow time-dependent and path-dependent coefficients, which does not complicate the arguments or weaken the results but requires some additional notation. 
For $t \ge 0$  let $\C_t^d:=C([0,t];\R^d)$ denote the continuous path space equipped with the uniform norm
\begin{align*}
\|x\|_t := \sup_{s \in [0,t]}|x_s|.
\end{align*}
We also view $\|\cdot\|_t$ as a seminorm on $\C^d_T$ for $t < T$.
Identify $\C_0^d$ with $\R^d$.
Here $|\cdot|$ always denote the Euclidean ($\ell_2$) norm on $\R^k$ for any $k$, or on $(\R^d)^k\cong \R^{dk}$. Similarly, $(\C_T^d)^k$ is equipped with the sup-norm induced by the identification $(\C_T^d)^k\cong \C_T^{dk}$.
For any $k \in \N$, $Q \in \P(\C_T^k)$, and $t \in [0,T]$,  denote by   $Q_t \in \P(\R^k)$ denote the time-$t$ marginal, i.e., the pushforward by the evaluation map $x \mapsto x_t$.

For $k \in \N$ and a topological space $E$, we say that a function $\overline{b} : [0,T] \times \C_T^k \to E$ is \emph{progressively measurable} if it is Borel measurable and non-anticipative, in the sense that $\overline{b}(t,x)=\overline{b}(t,x')$ for every $t \in [0,T]$ and $x,x' \in C_T^k$ satisfying $x|_{[0,t]}=x'|_{[0,t]}$.
For a function $\overline{b} : [0,T] \times \C_T^d  \to \R^d$ we define $\overline{b}^{\otimes k} : [0,T] \times (\C_T^d)^k \to (\R^d)^k$ by
\begin{align}
\overline{b}^{\otimes k}(t,x^1,\ldots,x^k) = (\overline{b}(t,x^1),\ldots,\overline{b}(t,x^k)). \label{def:tensor}
\end{align}

\subsection{Pairwise interactions} \label{se:intro-dyn}

For the rest of this section, fix progressively measurable functions $b_0 : [0,T] \times \C_T^d \to \R^d$ and $b : [0,T] \times \C_T^d \times \C_T^d \to \R^d$.
Consider the SDE system
\begin{align}
dX^i_t = \bigg(b_0(t,X^i) + \frac{1}{n-1}\sum_{j \neq i}b(t,X^i,X^j)\bigg)dt +  dW^i_t, \quad i=1,\ldots,n, \label{def:introSDE-nonMarkov}
\end{align}
where $W^1,\ldots,W^n$ are independent $d$-dimensional Brownian motions.
In stating that $(X^1,\ldots,X^n)$ is a solution of the SDE system \eqref{def:introSDE-nonMarkov}, we implicitly require that 
\begin{align*}
 \int_0^T \bigg(|b_0(t,X^i)| + \frac{1}{n-1}\sum_{j \neq i}|b(t,X^i,X^j)|\bigg) dt < \infty, \ \ \ a.s., \text{ for } i=1,\ldots,n.
\end{align*}
which ensures that the equation \eqref{def:introSDE-nonMarkov} makes sense.
We write $P^{(n)} \in \P((\C_T^d)^n)$ for the law of a weak solution $(X^1,\ldots,X^n)$ of \eqref{def:introSDE-nonMarkov}, and let $P^{(k)} \in \P((\C_T^d)^k)$ denote the marginal law of the first $k$ coordinates.
As the results in this section are non-asymptotic, we consider the size $n$ of the particle system as fixed, and we lighten the notation compared to the introduction by writing simply $P^{(k)}$ instead of $P^{(n,k)}$, and $X^k$ in place of $X^{n,k}$.

Consider a weak solution of the McKean-Vlasov equation
\begin{align}
dX_t = \left(b_0(t,X) + \langle \mu, b(t,X,\cdot)\rangle\right)dt +  dW_t, \quad \mu = \mathrm{Law}(X), \label{def:introMV-nonMarkov}
\end{align}
where $W$ is a $d$-dimensional Brownian motion. We
implicitly require that a solution satisfies
\begin{align*}
 \int_0^T \big(|b_0(t,X)| + \langle \mu, |b(t,X,\cdot)|\rangle \big) dt < \infty, \ \ \ a.s.,
\end{align*}
which ensures that the equation \eqref{def:introMV-nonMarkov} makes sense.

Our main theorems assume the existence of weak solutions of the equations \eqref{def:introSDE-nonMarkov} or \eqref{def:introMV-nonMarkov}, with the law $P^{(n)}$ of the former being exchangeable. Note that exchangeability of $P^{(n)}$  is automatic if  the SDE is unique in law and the initial distribution is exchangeable.
We do not need uniqueness for \eqref{def:introSDE-nonMarkov} or \eqref{def:introMV-nonMarkov}, but we do require that the \emph{linearized} version of \eqref{def:introMV-nonMarkov} to well-posed in a somewhat specialized sense in order to justify our entropy estimate in Lemma \ref{le:entropy-diffusions}; see Remarks \ref{re:assumptions} and \ref{re:wellposedness} for further discussion.

\begin{definition} \label{def:SDEwellposed}
Let $\ell \in \N$, and let $\overline{b} : [0,T] \times \C_T^\ell \to \R^\ell$ be progressively measurable. We say that the \emph{SDE$(\overline{b})$ is well-posed} if, for every $(t,z) \in [0,T) \times \C_T^\ell$, there exists  a unique in law weak solution of the SDE
\begin{align*}
dX_s &= \overline{b}(s,X)\,ds + dW_s, \text{ for } s \in (t,T], \quad \text{ and } \quad X_s = z_s, \text{ for } s \in [0,t],
\end{align*}
where $W$ is an $\ell$-dimensional Brownian motion.
\end{definition}

The following is the main general result of the paper.
Let $x_+=\max(x,0)$ denote the positive part of a real number $x$, and recall the notation $\overline{b}^{\otimes k}$ from \eqref{def:tensor}.

\begin{theorem} \label{th:intro-dynamic}
Assume there exists a weak solution of the SDE  \eqref{def:introSDE-nonMarkov} whose law $P^{(n)} \in \P((\C_T^d)^n)$ is exchangeable, and assume there exists a weak solution of the McKean-Vlasov equation \eqref{def:introMV-nonMarkov} with law $\mu \in \P(\C_T^d)$.
Assume the following:
\begin{enumerate}[(1)]
\item Well-posedness of the linearized McKean-Vlasov equation: For each $k =1,\ldots,n$,  SDE$(\overline{b}^{\otimes k})$ is well-posed in the sense of Definition \ref{def:SDEwellposed}, where $\overline{b}(t,x) := b_0(t,x) + \langle \mu,b(t,x,\cdot)\rangle$.
\item Square-integrability:
\begin{align}
M := \mathrm{ess\,sup}_{t \in [0,T]}\int_{\C_T^d \times \C_T^d} \left|  b(t,x,y) - \langle \mu,b(t,x,\cdot)\rangle \right|^2 \, P^{(2)}(dx,dy) < \infty. \label{asmp:dyn-moment}
\end{align}
\item Transport-type inequality: There exists $0 < \transpconst < \infty$ such that
\begin{align}
\begin{split}
&|\langle \mu - \nu, b(t,x,\cdot)\rangle|^2 \le  \transpconst H(\nu\,|\,\mu), \\
\text{for all }& t \in (0,T), \ x \in \C_T^d, \ \nu \in \P(\C_T^d) \text{ s.t. } b(t,x,\cdot) \in L^1(\nu).
\end{split} \label{asmp:dyn-transp}
\end{align}
\item Chaotic initial conditions: There exists $C_0 < \infty$ such that
\begin{align}
H(P^{(k)}_0\,|\,\mu^{\otimes k}_0) &\le C_0 k^2/n^2, \quad \text{for } k=1,\ldots,n. \label{asmp:dyn-init}
\end{align}
\end{enumerate}
If $n \ge 6 e^{\transpconst T }$, then for integers $k \in [1,n]$ we have 
\begin{align}
H(P^{(k)} \,|\,\mu^{\otimes k}) &\le 2C\frac{k^2}{n^2} + C\exp\Big(-2n\Big(e^{-\transpconst T} - \frac{k}{n}\Big)_+^2\Big), \label{eq:dyn-result}
\end{align}
where the constant $C$ is defined by
\begin{align}
C := 8\left( C_0 + (1+ \transpconst ) M T  \right)e^{6 \transpconst  T  } . \label{def:dyn-constC}
\end{align}
\end{theorem}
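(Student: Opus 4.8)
The plan is to follow exactly the roadmap sketched in Section \ref{se:ideas}. First, I would make rigorous the representation of the marginal law $P^{(k)}$ as the law of a $k$-particle SDE with drift $\widehat b^k$ given by the conditional expectation formula, using the mimicking theorem for It\^o processes together with assumption (1) to guarantee that this drift is well-defined and the resulting SDE admits the law $P^{(k)}$ as its unique solution. Assumption (1) is precisely what licenses the Girsanov-based relative entropy inequality (Lemma \ref{le:entropy-diffusions}), which in path-space form gives
\begin{align*}
H(P^{(k)}\,|\,\mu^{\otimes k}) = \frac12\int_0^T\!\!\int_{(\C_T^d)^k}\bigl|\widehat b^k(t,x)-\overline b^{\otimes k}(t,x)\bigr|^2\,P^{(k)}(dx)\,dt,
\end{align*}
and more usefully a differential version for $H^k_t := H(P^{(k)}|_{[0,t]}\,|\,\mu^{\otimes k}|_{[0,t]})$. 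Expanding $\widehat b^k_i - \overline b_i = \frac{1}{n-1}\sum_{j\le k,\,j\neq i}(b(t,x^i,x^j)-\langle\mu,b(t,x^i,\cdot)\rangle) + \frac{n-k}{n-1}\langle P^{(k+1|k)}_{t,x}-\mu,b(t,x^i,\cdot)\rangle$, using $(a+b)^2\le 2a^2+2b^2$, exchangeability, assumption (2) to bound the first (variance-like) group by $\tfrac{k(k-1)^2}{(n-1)^2}M$, assumption (3) to bound each term in the second group by $\transpconst H(P^{(k+1|k)}_{t,x}\,|\,\mu_t)$, and the chain rule for relative entropy $\int H(P^{(k+1|k)}_{t,x}\,|\,\mu_t)\,P^{(k)}_t(dx) = H^{k+1}_t - H^k_t$, I arrive at the central differential inequality
\begin{align*}
\frac{d}{dt}H^k_t \le \frac{k(k-1)^2}{(n-1)^2}M + \transpconst k\,(H^{k+1}_t - H^k_t), \qquad k=1,\ldots,n-1,
\end{align*}
with the boundary case $H^n_t\le H^n_0 + nMt$ (here the $(k+1)$-term is absent since $P^{(n+1|n)}$ would be degenerate; one bounds the single remaining term trivially).

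Next I would integrate this via Gronwall to get $H^k_t \le e^{-\transpconst k t}H^k_0 + \int_0^t e^{-\transpconst k(t-s)}\bigl(\tfrac{k(k-1)^2}{(n-1)^2}M + \transpconst k H^{k+1}_s\bigr)ds$, and then iterate this bound, substituting the estimate for $H^{k+1}$, then $H^{k+2}$, and so on up to the crude global bound on $H^n$. The resulting expression is a sum over $m$ from $k$ to $n$ of nested exponential integrals; the combinatorial/analytic heart of the argument is to estimate these nested integrals cleanly. I expect the key bound to be of the form: the weight attached to the $m$-th term decays like a product $\prod$ of factors $\transpconst j$ against iterated integrals of exponentials, which telescopes to something controllable by a Poisson-type tail. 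This is where the hypothesis $n\ge 6e^{\transpconst T}$ enters, ensuring the geometric-type series converges with the stated constant, and where the $\exp(-2n(e^{-\transpconst T}-k/n)_+^2)$ term is produced — essentially a large-deviation/Chernoff estimate for the tail $\sum_{m>n e^{-\transpconst T}}$ of the nested sum.

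Finally, there is the refinement from the suboptimal $O(k^3/n^2)$ to the optimal $O(k^2/n^2)$. Having run the above to obtain $H^3_t = O(1/n^2)$ (hence by Pinsker $\|P^{(3)}_t-\mu^{\otimes 3}_t\|_{\mathrm{TV}}=O(1/n)$), I would revisit the expansion of the first (variance-like) group in \eqref{intro:term1}: the diagonal terms $j$ contribute $O(1/(n-1)^2)$ each (there are $k-1$ of them, total $O(k/n^2)$), while each cross-term $\langle b(x^i,x^j)-\langle\mu,b(x^i,\cdot)\rangle,\,b(x^i,x^{j'})-\langle\mu,b(x^i,\cdot)\rangle\rangle$ with $j\neq j'$ integrates against $P^{(k)}_t$; since under $\mu^{\otimes k}_t$ these cross-terms vanish by independence, the error is controlled by $\|P^{(3)}_t-\mu^{\otimes 3}_t\|_{\mathrm{TV}}$ times a bound on the integrand, giving $O(1/n)$ per cross-term and $O(k^2/n^3)\cdot\ldots$ — more precisely the factor multiplying $M$ improves from $\tfrac{k(k-1)^2}{(n-1)^2}$ to $O(k^2/n^2)$. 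Re-running the Gronwall iteration with this improved inhomogeneity then yields \eqref{eq:dyn-result} with the constant \eqref{def:dyn-constC}. I anticipate the main obstacle to be the bookkeeping in the nested-integral estimate in the second step: getting the explicit constant $8(C_0+(1+\transpconst)MT)e^{6\transpconst T}$ and the clean tail term requires carefully tracking how the factors $\transpconst k$ and the shrinking time intervals interact across the $n-k$ levels of iteration, rather than any single conceptual difficulty.
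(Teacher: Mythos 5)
Your plan matches the paper's proof in broad structure: derive the BBGKY-type representation of $P^{(k)}$ via a projection/mimicking lemma, apply the Girsanov entropy formula to get a differential inequality for $H^k_t$ in terms of $H^{k+1}_t$, iterate via Gronwall, estimate the nested exponential integrals, and refine the variance term in a second pass. However, there is a genuine gap in the self-improvement step, and it is precisely where the theorem's generality is at stake.

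You propose to control the cross-terms
\begin{align*}
\E_{P^{(3)}}\!\left[\bigl(b(t,X^1,X^2)-\langle\mu,b(t,X^1,\cdot)\rangle\bigr)\cdot\bigl(b(t,X^1,X^3)-\langle\mu,b(t,X^1,\cdot)\rangle\bigr)\right]
\end{align*}
by transferring the integral from $P^{(3)}$ to $\mu^{\otimes 3}$ (where it vanishes by independence) using the TV distance, which requires an $L^\infty$ bound on the integrand. But $b$ is not assumed bounded here; only the square-integrability (2) and the transport-type inequality (3) are imposed. Your argument would therefore establish the claim only in the bounded case (already contained in Corollary~\ref{co:bounded}), not the general theorem, where $b$ may have linear growth as in Theorem~\ref{th:sublinear}. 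The paper sidesteps this: conditioning on $(X^1,X^2)$ rewrites the cross-term as $\E[(b(t,X^1,X^2)-\langle\mu,b(t,X^1,\cdot)\rangle)\cdot\langle P^{(3|2)}_{X^1,X^2}-\mu,b(t,X^1,\cdot)\rangle]$, and then Cauchy--Schwarz combined with assumption (2) on the first factor and the transport inequality (3) on the second gives a bound $M^{1/2}\bigl(\transpconst\,\E\,H(P^{(3|2)}_{X^1,X^2}\,|\,\mu)\bigr)^{1/2}\le M^{1/2}(\transpconst H^3_T)^{1/2}$. This is the essential device that makes the $k^3\to k^2$ improvement work under assumptions (2)--(3) alone.

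Two smaller points. First, assumption (1) is about well-posedness of the \emph{reference} SDE with drift $\overline b^{\otimes k}$ (i.e., the product measure $\mu^{\otimes k}$'s SDE), which is what Lemma~\ref{le:entropy-diffusions} needs; the mimicking/projection step for $P^{(k)}$ does not require uniqueness for $P^{(k)}$'s own SDE — your first sentence attributes the wrong role to (1). Second, the estimate of the iterated exponential integrals is not just bookkeeping: the paper identifies $A_k^\ell(t)$ as the CDF of a sum of independent exponentials with rates $\transpconst k, \transpconst(k+1),\ldots$, transforms it to a Beta$(k,\ell+1)$ tail probability, and invokes a sub-Gaussian concentration bound for Beta laws to extract the sharp $\exp(-2n(e^{-\transpconst T}-k/n)_+^2)$ term. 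The hypothesis $n\ge 6e^{\transpconst T}$ is then used in the self-improvement step to make that tail contribution to $H^3_T$ of size $O(1/n^2)$, not to ensure convergence of the iteration series.
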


The proof is given in Section \ref{se:proofs-dyn}.
The bound \eqref{eq:dyn-result} is non-asymptotic but easily yields the asymptotic bound of $O((k/n)^2)$ when $n\to\infty$ with $k=o(n)$, as long as the stated assumptions hold with constants $(M,\transpconst,C_0)$ not depending on $n$.
The constant $C$ is not optimal, but the rate $(k/n)^2$ is, as Example \ref{ex:gaussian-dyn} will show.

Before stating corollaries, we first discuss the assumptions of Theorem \ref{th:intro-dynamic}. 
The most notable assumption in Theorem \ref{th:intro-dynamic} is the inequality \eqref{asmp:dyn-transp}. It is equivalent to a square-exponential integrability:

\begin{lemma} \label{le:integ-transp-equiv}
For any $\mu \in \P(\C_T^d)$ satisfying $\langle \mu, |b(t,x,\cdot)|\rangle < \infty$ for all $(t,x) \in [0,T] \times \C_T^d$, the following are equivalent:
\begin{enumerate}[(a)]
\item There exists $\transpconst  > 0$ such that \eqref{asmp:dyn-transp} holds. 
\item There exist $\kappa,R > 0$ such that
\begin{align*}
\sup_{(t,x) \in (0,T) \times \C_T^d} \log\int_{\C_T^d} \exp\left( \kappa |b(t,x,y) - \langle \mu,b(t,x,\cdot)\rangle|^2\right) \, \mu(dy) \le R.
\end{align*}
\end{enumerate}
If (b) holds, then (a) holds with $\transpconst = 2(1+R)/\kappa$.
\end{lemma}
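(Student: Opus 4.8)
The plan is to prove the two implications separately, and in both directions the main tool is the Donsker--Varadhan variational formula for relative entropy: for any measurable $g : \C_T^d \to \R$ and any $\nu \ll \mu$,
\[
\langle \nu, g\rangle \le \log\langle \mu, e^{g}\rangle + H(\nu\,|\,\mu).
\]
For the implication (b)$\Rightarrow$(a), fix $(t,x)$ and abbreviate $g(y) := b(t,x,y) - \langle\mu,b(t,x,\cdot)\rangle$, which is mean-zero under $\mu$. For any unit vector $e \in \R^d$ and any $\lambda \in \R$, applying the inequality above with $\lambda\, e\cdot g$ in place of $g$ gives
\[
\lambda\, e\cdot\langle\nu-\mu, b(t,x,\cdot)\rangle = \langle \nu, \lambda\, e\cdot g\rangle \le \log\langle\mu, e^{\lambda\, e\cdot g}\rangle + H(\nu\,|\,\mu).
\]
The hard (but routine) part is to bound the log-moment generating function $\log\langle\mu, e^{\lambda\, e\cdot g}\rangle$ by something like $C\lambda^2$ using only (b). Since $|e\cdot g|\le|g|$, one has $e^{\lambda e\cdot g}\le e^{|\lambda||g|}\le \exp\big(\tfrac{\lambda^2}{2\kappa} + \tfrac{\kappa}{2}|g|^2\big)$ by Young's inequality $ab\le a^2/2+b^2/2$ applied to $|\lambda||g| = (|\lambda|/\sqrt\kappa)(\sqrt\kappa|g|)$, wait — more precisely $|\lambda|\,|g| \le \tfrac{\lambda^2}{2\kappa}+\tfrac{\kappa}{2}|g|^2$, so that $\langle\mu,e^{\lambda e\cdot g}\rangle \le e^{\lambda^2/(2\kappa)}\langle\mu, e^{(\kappa/2)|g|^2}\rangle \le e^{\lambda^2/(2\kappa)}\cdot e^{R}$ by (b) (using $\kappa/2 \le \kappa$ and that the exponential integral bound is monotone, or simply re-deriving with $\kappa/2$; either way $\langle\mu,e^{(\kappa/2)|g|^2}\rangle \le (\langle\mu,e^{\kappa|g|^2}\rangle)^{1/2}\le e^{R/2}$ by Jensen). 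Hence $\log\langle\mu,e^{\lambda e\cdot g}\rangle \le \lambda^2/(2\kappa) + R/2$, and optimizing $\lambda\, u \le \lambda^2/(2\kappa)+R/2+H$ over $\lambda$ (choosing $\lambda = \kappa u$, where $u := e\cdot\langle\nu-\mu,b(t,x,\cdot)\rangle$) yields $\kappa u^2/2 \le R/2 + H(\nu\,|\,\mu)$, i.e. $u^2 \le (R + 2H(\nu\,|\,\mu))/\kappa$. Taking the supremum over unit vectors $e$ replaces $u^2$ by $|\langle\nu-\mu,b(t,x,\cdot)\rangle|^2$; and since $H(\mu\,|\,\mu)=0$ forces us to also absorb the constant $R/\kappa$ — we bound $R/\kappa \le (R/\kappa)\cdot$ nothing, so instead we use the cleaner route: $u^2\le (R+2H)/\kappa \le (2+2R)/\kappa \cdot \max(1,H)$... the honest fix is that \eqref{asmp:dyn-transp} as stated has no additive constant, so one should instead argue: if $H(\nu\,|\,\mu)\ge 1$ then $u^2\le (R+2H)/\kappa \le (R+2)H/\kappa$, while if $H(\nu\,|\,\mu)<1$ one uses the sharper Chernoff bound with the quadratic exponential moment directly. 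In fact the standard clean statement is: a centered random variable $Z$ with $\langle\mu,e^{\kappa Z^2}\rangle\le e^R$ satisfies $\log\langle\mu,e^{\lambda Z}\rangle \le \transpconst'\lambda^2$ for a suitable $\transpconst'$, with no additive constant, because centering kills the $O(\lambda)$ term; this is a classical sub-Gaussian lemma and gives precisely $\transpconst = 2(1+R)/\kappa$ after the Legendre transform. I would cite or reprove this sub-Gaussian fact as the crux.

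For the reverse implication (a)$\Rightarrow$(b), the strategy is to feed the tilted measures $\nu = \nu_\lambda$ with $d\nu_\lambda/d\mu \propto e^{\lambda e\cdot g}$ into \eqref{asmp:dyn-transp}. For such $\nu_\lambda$ one computes $H(\nu_\lambda\,|\,\mu) = \lambda\langle\nu_\lambda, e\cdot g\rangle - \log\langle\mu,e^{\lambda e\cdot g}\rangle$, and simultaneously $\langle\nu_\lambda-\mu, e\cdot g\rangle = \Lambda'(\lambda)$ where $\Lambda(\lambda) := \log\langle\mu, e^{\lambda e\cdot g}\rangle$. Since $\langle\mu, e\cdot g\rangle = 0$ we get $\langle\nu_\lambda, e\cdot g\rangle = \Lambda'(\lambda)$ and $H(\nu_\lambda\,|\,\mu) = \lambda\Lambda'(\lambda) - \Lambda(\lambda)$. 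Plugging into \eqref{asmp:dyn-transp} gives the differential inequality $\Lambda'(\lambda)^2 \le \transpconst(\lambda\Lambda'(\lambda) - \Lambda(\lambda))$; integrating this (with $\Lambda(0)=\Lambda'(0)=0$, $\Lambda$ convex) yields the Gaussian-type bound $\Lambda(\lambda)\le \transpconst\lambda^2/2$ (up to constants), i.e. $\langle\mu, e^{\lambda e\cdot g}\rangle \le e^{\transpconst\lambda^2/2}$ uniformly in $(t,x)$ and the unit vector $e$. From such uniform exponential moments of all one-dimensional projections of $g$, one recovers a square-exponential moment of $|g|$ itself by a standard argument — e.g. cover the sphere, or use the bound $e^{\kappa|g|^2} \le$ (average over a Gaussian $\xi$ of $e^{c\,\xi\cdot g}$) via the identity $\mathbb E_\xi e^{c\xi\cdot g} = e^{c^2|g|^2/2}$, then Fubini — giving (b) with an appropriate $\kappa$ and $R$ depending only on $\transpconst$ and $d$.

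The main obstacle, as flagged, is purely technical bookkeeping around the two Legendre-transform computations and making sure no stray additive constant spoils the clean form of \eqref{asmp:dyn-transp}; the centering $\langle\mu, b(t,x,\cdot)\rangle$ is exactly what makes this work, since a centered random variable with a quadratic-exponential moment is sub-Gaussian with a constant proportional to that moment and \emph{no} additive term, and conversely. The passage between exponential moments of projections $e\cdot g$ and of the norm $|g|$ is standard in finite dimension $d$ and contributes only a dimensional constant; I would relegate it to a one-line Gaussian-integral trick. The uniformity in $(t,x)$ is free throughout, since every estimate is pointwise in $(t,x)$ with constants coming only from $\transpconst$ (resp. $\kappa, R$).
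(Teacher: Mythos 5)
Your proposal takes genuinely different routes from the paper in both directions, and the (a)$\Rightarrow$(b) direction has a real gap.

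For (b)$\Rightarrow$(a), the paper simply applies the \emph{weighted Pinsker inequality} of Bolley--Villani (quoted in \eqref{ineq:weightedPinsker}) with $f = \kappa^{1/2}\,(b(t,x,\cdot) - \langle\mu,b(t,x,\cdot)\rangle)$, which gives $\transpconst = 2(1+R)/\kappa$ in one line. Your Donsker--Varadhan computation is essentially a proof sketch of that same inequality, and your self-correction mid-argument identifies the right missing ingredient (a centered variable with a square-exponential moment has a quadratic log-MGF with no additive term). But as written you don't close that loop: you end up with $u^2 \le (R+2H)/\kappa$, which only reduces to a constant times $H$ when $H\ge 1$, and the small-$H$ case is waved away. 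The weighted Pinsker inequality packages exactly this and gives the exact constant; you should invoke it rather than re-derive it.

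For (a)$\Rightarrow$(b), your tilted-measure argument has a genuine obstruction you don't address: to define $\nu_\lambda$ with $d\nu_\lambda/d\mu \propto e^{\lambda\, e\cdot g}$, you need $\langle\mu, e^{\lambda e\cdot g}\rangle < \infty$ for some $\lambda\ne 0$, but the only a priori information is $\langle\mu,|b(t,x,\cdot)|\rangle < \infty$. Finiteness of exponential moments is precisely the conclusion, not something you may assume. Truncating $g$ does not cleanly repair this, since \eqref{asmp:dyn-transp} is an assumption about $b$, not its truncation, and the mismatch between $\langle\nu_{\lambda,N},e\cdot g\rangle$ and $\Lambda_N'(\lambda)$ introduces an error you would then need to control — by essentially the moment bound you are trying to establish. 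The paper sidesteps this with Marton's argument: apply (a) with $\nu$ the normalized restriction of $\mu$ to a level set $\{f_i\ge r\}$, whose entropy is $-\log\mu(f_i\ge r)$, to read off a Gaussian tail bound coordinate-by-coordinate; a union bound and integration then give (b). This requires no tilting and no a priori integrability beyond what the lemma assumes. Your Gaussian-averaging trick to pass from one-dimensional projections to $|g|$ is fine and standard, but it is only the final, unproblematic step.

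The uniformity in $(t,x)$ is indeed free in both treatments, as you note.
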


The proof is given in Appendix \ref{ap:transp-proof}.  This is similar in spirit to the well known integral criteria for transport inequalities; see \cite[Section 6]{gozlan-leonard} and references therein. The implication (b) $\Rightarrow$ (a) is a consequence of the weighted Pinsker inequality of  \cite{bolleyvillani}. The constants $\kappa$ and $R$ in (b) could be estimated explicitly from the constant $\transpconst$ in (a), though not as simply, and this direction is not as important for our purposes.
We comment further on the assumptions of Theorem \ref{th:intro-dynamic}, before giving concrete examples below.

\begin{remark} \label{re:assumptions}
{\ } 
\begin{enumerate}[(i)]
\item Suppose the coefficients are Markovian, in the sense that $b_0(t,x)=\widetilde{b}_0(t,x_t)$ and  $b(t,x,y)=\widetilde{b}_0(t,x_t,y_t)$ for functions $\widetilde{b}_0$ and $\widetilde{b}$ defined on $[0,T] \times \R^d$ and $[0,T] \times \R^d \times \R^d$, respectively. Then a sufficient condition for (3) is that  there exists $\transpconst$ such that 
\begin{align*}
|\langle \mu_t - \nu, \widetilde{b}(t,x,\cdot)\rangle|^2 \le  \transpconst H(\nu\,|\,\mu_t), \ \ \forall t \in (0,T), \ x \in \R^d, \ \nu \in \P(\R^d).
\end{align*}
Indeed, the inequality \eqref{asmp:dyn-transp} follows from $H(\nu_t\,|\,\mu_t) \le H(\nu\,|\,\mu)$, valid for $\nu \in \P(\C_T^d)$.
\item The assumption (4) of chaotic initial conditions is natural if we are to expect \eqref{eq:dyn-result}. It is automatic if the initial distribution is i.i.d.\ in the sense that $P^{(n)}_0=\mu^{\otimes n}_0$. It also holds for a broad class of Gibbs measures, as shown in the companion paper \cite{lacker-static}. For instance, suppose $P^{(n)}_0 \in \P((\R^d)^n)$ is of the form
\begin{align*}
P^{(n)}_0(dx) = \frac{1}{Z_n}\exp\bigg( -  \sum_{i=1}^nU(x_i) - \frac{1}{ n-1 }\sum_{1 \le i < j \le n}V(x_i-x_j)  \bigg)dx, \ \ \ Z_n > 0,
\end{align*}
where $U$ and $V$ are convex even functions with $\nabla^2U \ge \kappa I$ and $0 \le \nabla^2 V \le LI$ in semidefinite order for some constants $\kappa, L > 0$. Then, if $L < \kappa$, there exists a unique $\mu_0 \in \P(\R^d)$ satisfying
\begin{align*}
\mu_0(dx) &= \frac{1}{Z}\exp\big( -  U(x) - V *\mu_0(x) \big)dx, \ \ \ Z > 0,
\end{align*}
and it holds that $H(P^{(k)}_0\,|\,\mu^{\otimes k}_0) \le C_0(k/n)^2$ for all $1\le k \le n$, where the constant $C_0 > 0$ depends only on $(L,\kappa,d)$ and not on $(n,k)$; see \cite[Corollary 2.7]{lacker-static}.
\item There is no assumption on $b_0$, other than progressive measurability and implicit restrictions imposed by the hypothesis that the SDEs \eqref{def:introSDE-nonMarkov} and \eqref{def:introMV-nonMarkov} admit solutions.
 Conditions for well-posedness and square-integrability of McKean-Vlasov equations of the form \eqref{def:introMV-nonMarkov}, and especially for standard SDEs of the form \eqref{def:introSDE-nonMarkov}, are well known and plentiful in the literature. This is why we prefer to state the assumptions (1) and (2) of Theorems \ref{th:intro-dynamic} in abstract terms. 
\item The assumptions (1) and (3) actually imply a form of uniqueness of the McKean-Vlasov equation, though we do not need this. See Remark \ref{re:MVuniqueness}.
\item Even if $P^{(n)}$ is not exchangeable, one could still apply Theorem \ref{th:intro-dynamic} to the symmetrized measure obtained by randomly permuting the coordinates.
\item The well-posedness assumption (1) is somewhat awkward. One might suspect that the well-posedness for general $k \in \N$ would follow from the well-posedness for the $k=1$ case. But this is not obvious, unless we assume pathwise uniqueness for $k=1$ instead of uniqueness in law.
See Remark \ref{re:wellposedness} for further discussion of relaxing (1).
\end{enumerate}
\end{remark}

The rest of this section presents corollaries of Theorem \ref{th:intro-dynamic} under more concrete assumptions, which are proven in Section \ref{se:proofs:corollaries}. The first two corollaries descend from a general principle mentioned in the introduction:

\begin{example}[Transport inequality] \label{ex:transport}
Let $L,\transpconst' < \infty$.
Suppose for each $t \in [0,T]$ that $\mu$ satisfies the transport inequality $\W_{1,\rho}^2(\cdot,\mu) \le \transpconst' H(\cdot\,|\,\mu)$, where $\W_{1,\rho}$ is defined relative to some Borel measurable pseudo-metric $\rho: \C_T^d \times \C_T^d \to [0,\infty)$ with respect to which $b(t,x,\cdot)$ is $L$-Lipschitz for all $(t,x) \in (0,T) \times \C_T^d$. Then \eqref{asmp:dyn-transp} holds with $\transpconst := L^2\transpconst'$.
\end{example}

The case of bounded $b$ fits into Example \ref{ex:transport}, using the discrete metric, for which the transport inequality reduces to Pinsker's inequality. 

\begin{corollary} \label{co:bounded}
Suppose $b$ is uniformly bounded.
Assume the SDE$(b_0^{\otimes k})$ is well-posed in the sense of Definition \ref{def:SDEwellposed}, for each $k =1,\ldots,n$.
Then the SDE \eqref{def:introSDE-nonMarkov} and the  McKean-Vlasov SDE \eqref{def:introMV-nonMarkov} admit unique in law weak solutions from any initial distribution.
Let $P^{(n)}$ and $\mu$, respectively, denote solutions whose initial distributions satisfy condition (4) of Theorem \ref{th:intro-dynamic}, with $P^{(n)}_0$ exchangeable.
Then the conclusion \eqref{eq:dyn-result} of Theorem \ref{th:intro-dynamic} holds, with $\transpconst := 2\||b|^2\|_\infty$ and with
\begin{align}
C \le 8\left( C_0 + (1+ \transpconst ) \transpconst T  \right)e^{6 \transpconst  T  } . \label{def:dyn-constC-bd}
\end{align}
\end{corollary}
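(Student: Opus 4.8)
\textbf{Proof proposal for Corollary \ref{co:bounded}.}

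The plan is to verify each of the four hypotheses of Theorem \ref{th:intro-dynamic} under the boundedness assumption on $b$, after first establishing the claimed well-posedness of the two SDE systems. For the well-posedness, I would invoke standard results (Girsanov / Veretennikov-type theorems, or the references on SDEs with bounded measurable drift and nondegenerate constant diffusion): since $b$ is bounded and the diffusion coefficient is the identity, adding the bounded interaction term to $b_0$ preserves whatever well-posedness one has for SDE$(b_0^{\otimes k})$. Concretely, for the $n$-particle system \eqref{def:introSDE-nonMarkov}, the drift is $b_0^{\otimes n}$ perturbed by a bounded progressively measurable term, so unique-in-law weak solutions exist from any initial law by Girsanov (absolute continuity with respect to the law of the SDE$(b_0^{\otimes n})$ solution, which is well-posed by hypothesis). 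For the McKean-Vlasov equation \eqref{def:introMV-nonMarkov}, existence follows by a fixed-point / compactness argument using boundedness of $b$, and uniqueness in law follows because $b(t,x,\cdot)$ is trivially Lipschitz in the total-variation (equivalently $\W_{1,\rho}$ with $\rho = 1_{x\neq y}$) metric on $\P(\C_T^d)$, so the map $\mu \mapsto \langle\mu, b(t,x,\cdot)\rangle$ is stable; alternatively this is exactly the content of Remark \ref{re:assumptions}(iv) once assumptions (1) and (3) are in place. This also gives assumption (1): SDE$(\overline{b}^{\otimes k})$ with $\overline{b}(t,x) = b_0(t,x) + \langle\mu, b(t,x,\cdot)\rangle$ is SDE$(b_0^{\otimes k})$ perturbed by the bounded measurable term $x \mapsto \langle\mu,b(t,x^i,\cdot)\rangle$ in each block, hence well-posed by Girsanov against the well-posed SDE$(b_0^{\otimes k})$.

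For assumption (2), boundedness gives $|b(t,x,y) - \langle\mu,b(t,x,\cdot)\rangle| \le 2\|\,|b|\,\|_\infty$ pointwise, so the integral in \eqref{asmp:dyn-moment} is at most $4\|\,|b|\,\|_\infty^2 = 2\transpconst$ with $\transpconst := 2\|\,|b|^2\,\|_\infty \ge 2\|\,|b|\,\|_\infty^2$; thus $M \le 2\transpconst$, and in particular $M \le \transpconst$ is false in general but $M \le 2\transpconst$ suffices — I should double-check the constant bookkeeping here, since the claimed bound \eqref{def:dyn-constC-bd} replaces $M$ by $\transpconst$ in \eqref{def:dyn-constC}. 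Re-examining: $|b(t,x,y) - \langle\mu,b(t,x,\cdot)\rangle|^2 \le (2\|\,|b|\,\|_\infty)^2 = 4\|\,|b|\,\|_\infty^2$. Since $\|\,|b|^2\,\|_\infty = \|\,|b|\,\|_\infty^2$, we get $M \le 4\|\,|b|^2\,\|_\infty = 2\transpconst$. Hmm, that gives $M \le 2\transpconst$, not $M \le \transpconst$. So to land exactly on \eqref{def:dyn-constC-bd} I would instead note the cruder but cleaner bound: by Jensen, $\mathrm{Var}$ of a bounded random variable taking values in a ball of radius $\|\,|b|\,\|_\infty$ around... actually the sharp bound is $M \le \||b|^2\|_\infty = \transpconst/2 \le \transpconst$, using that for any random vector $Z$ with $|Z|\le r$ one has $\E|Z - \E Z|^2 = \E|Z|^2 - |\E Z|^2 \le \E|Z|^2 \le r^2 = \|\,|b|^2\,\|_\infty$. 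This gives $M \le \|\,|b|^2\,\|_\infty = \transpconst/2 \le \transpconst$, which is what feeds \eqref{def:dyn-constC-bd}.

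For assumption (3), I would apply Example \ref{ex:transport} with the discrete pseudo-metric $\rho(x,y) = 1_{x \neq y}$ on $\C_T^d$: then $\W_{1,\rho}(\cdot,\mu) = \|\cdot - \mu\|_{\mathrm{TV}}$, and Pinsker's inequality gives $\W_{1,\rho}^2(\nu,\mu) = \|\nu-\mu\|_{\mathrm{TV}}^2 \le 2 H(\nu\,|\,\mu)$ (with the convention $\|\cdot\|_{\mathrm{TV}} \in [0,1]$; constants depend on normalization), so $\transpconst' = 2$. And $b(t,x,\cdot)$ is $L$-Lipschitz with respect to $\rho$ with $L = \|\,|b|\,\|_\infty = \|\,|b|^2\,\|_\infty^{1/2}$, since $|b(t,x,y) - b(t,x,y')| \le 2\|\,|b|\,\|_\infty \cdot 1_{y\neq y'}$ — here I should be careful about the factor $2$: one takes $L = 2\|\,|b|\,\|_\infty$ to be safe, or notes that the relevant quantity $|\langle\mu - \nu, b(t,x,\cdot)\rangle|$ is directly bounded by $2\|\,|b|\,\|_\infty\|\mu-\nu\|_{\mathrm{TV}}$, yielding $\transpconst = L^2\transpconst'$ with the constants combining to $\transpconst = 2\|\,|b|^2\,\|_\infty$ as claimed. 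Assumption (4) is exactly the stated hypothesis on the initial distributions. Finally, plugging $M \le \transpconst$ into \eqref{def:dyn-constC} yields \eqref{def:dyn-constC-bd}, and the conclusion \eqref{eq:dyn-result} follows verbatim from Theorem \ref{th:intro-dynamic}.

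\textbf{Main obstacle.} The routine part is large but the genuinely delicate point is the constant bookkeeping: pinning down the precise relationship between $\|\,|b|\,\|_\infty$, the Lipschitz constant $L$ of $b(t,x,\cdot)$ in the discrete metric, the transport constant $\transpconst'$ in Pinsker's inequality (which is sensitive to the $[0,1]$ vs $[0,2]$ normalization of total variation used elsewhere in the paper), and the square-integrability constant $M$, so that everything collapses to $\transpconst = 2\|\,|b|^2\,\|_\infty$ and $M \le \transpconst$ exactly as in \eqref{def:dyn-constC-bd}. The well-posedness claims, while invoking standard machinery, also require a small amount of care to state the Girsanov argument cleanly given the path-dependent coefficients allowed here.
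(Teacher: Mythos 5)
Your overall route is the same as the paper's: establish well-posedness of \eqref{def:introSDE-nonMarkov} by Girsanov against the well-posed SDE$(b_0^{\otimes n})$, well-posedness of \eqref{def:introMV-nonMarkov} by a Picard/fixed-point argument in total variation (this is exactly Proposition \ref{pr:MVwellposed-bounded}, which iterates the entropy estimate of Lemma \ref{le:entropy-diffusions} together with Pinsker), and then verify hypotheses (1)--(3) of Theorem \ref{th:intro-dynamic} and read off the conclusion. For (3) the paper does not pass through Example \ref{ex:transport} and Kantorovich duality at all: it applies the vector-valued Pinsker inequality \eqref{ineq:Pinsker} directly to $f=b(t,x,\cdot)$, which gives $\transpconst = 2\||b|^2\|_\infty$ immediately and avoids the normalization worries about the discrete metric that occupy you; your eventual ``direct bound'' lands in the same place, so this is only a stylistic detour.

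The one step that is actually wrong is your ``sharp bound'' $M \le \||b|^2\|_\infty$. The identity $\E|Z-\E Z|^2=\E|Z|^2-|\E Z|^2$ is not applicable here, because in \eqref{asmp:dyn-moment} the integration is with respect to $P^{(2)}(dx,dy)$, whereas the centering $\langle\mu,b(t,x,\cdot)\rangle$ is the mean of $b(t,x,\cdot)$ under $\mu$, not under the conditional law of $y$ given $x$ under $P^{(2)}$; the cross term therefore does not cancel, and nothing forces it to have a favorable sign (think of $b(t,x,y)$ equal to $\pm\||b|\|_\infty$ with $\mu$ and the conditional law charging opposite signs). The honest elementary bound is the one you first wrote down, $M \le 4\||b|^2\|_\infty = 2\transpconst$. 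This does not threaten the conclusion: \eqref{eq:dyn-result} still follows from Theorem \ref{th:intro-dynamic} with the same $\transpconst$ and the same $(k/n)^2$ rate, only with $\transpconst T$ replaced by $2\transpconst T$ inside the constant, i.e.\ a slightly larger $C$ than \eqref{def:dyn-constC-bd}. Be aware that the paper's own proof simply asserts ``clearly $M \le 2\||b|^2\|_\infty$'' (i.e.\ $M\le\transpconst$) without detail, so the exact constant in \eqref{def:dyn-constC-bd} rests on precisely this piece of bookkeeping; but your attempt to justify it via the variance identity, which you yourself singled out as the delicate point, does not work as written and should either be replaced by the crude $2\transpconst$ bound (accepting a worse constant) or by an argument that genuinely exploits the relation between $P^{(2)}$ and $\mu$.
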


The case of Lipschitz coefficients $(b_0,b)$ fits into Example \ref{ex:transport}, using the usual Euclidean metric. The associated transport inequality requires justification, though, especially to determine the constant $\transpconst$.
Recall that $(\C_t^d)^k \cong \C_t^{dk}$ is equipped with the supremum norm $\|\cdot\|_t$ associated with the usual Euclidean norm on $\R^{dk}$, which informs the definitions of the Wasserstein distances appearing in the following.

\begin{corollary} \label{co:Lipschitz-dyn}
Suppose $(b_0,b)$ is Lipschitz, in the sense that there exists $L < \infty$ such that
\begin{align}
|b_0(t,x)-b_0(t,x') + b(t,x,y)-b(t,x',y')| \le L(\|x-x'\|_t + \|y-y'\|_t), \label{asmp:Lipschitz-dyn}
\end{align}
for all $t \in [0,T]$ and $x,x',y,y' \in \C^d_T$. Assume also that
\begin{align}
M_0 := \int_0^T|b_0(t,0) + b(t,0,0)|\, dt < \infty. \label{asmp:Lipschitz-dyn2}
\end{align}
Then the SDE \eqref{def:introSDE-nonMarkov} admits a unique in law weak solution from any initial distribution, and the  McKean-Vlasov SDE \eqref{def:introMV-nonMarkov} admits a unique in law weak solution from starting from any initial distribution with finite first moment.
Let $P^{(n)}$ and $\mu$, respectively, denote solutions whose initial distributions satisfy condition (4) of Theorem \ref{th:intro-dynamic}, with $P^{(n)}_0$ exchangeable. Suppose also that $\mu_0$ has finite second moment and satisfies the quadratic transport inequality
\begin{align}
\W_2^2(\nu,\mu_0) \le \eta_0 H(\nu\,|\,\mu_0), \qquad \forall \nu \in \P(\R^d), \label{def:dyn-init-T2H}
\end{align}
for some $0 < \eta_0 < \infty$.
If $n \ge 6 e^{\transpconst T}$, then for integers $k \in [1,n]$ we have 
\begin{align}
\frac{L^2}{\transpconst}\W_2^2(P^{(k)}, \mu^{\otimes k}) &\le H(P^{(k)} \,|\,\mu^{\otimes k}) \label{ineq:Lipschitz} \\
	&\le 2C\frac{k^2}{n^2} +  C\exp\bigg(-2n\Big(e^{-\transpconst T} - \frac{k}{n}\Big)_+^2\bigg). \nonumber
\end{align}
where $C$ is as in \eqref{def:dyn-constC}, but with the constants $(M,\transpconst)$ defined by
\begin{align*}
M &= 8L^2e^{16TL^2}\left( \int_{\R^d} |x|^2\,\mu_0(dx) + \int_{\R^d} |x|^2\,P^{(1)}_0(dx) + 2M_0^2 + 8dT\right), \\
\transpconst &= 3L^2(\eta_0 \vee 2T)e^{3TL^2}.
\end{align*}
\end{corollary}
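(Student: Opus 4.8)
The plan is to derive Corollary~\ref{co:Lipschitz-dyn} from Theorem~\ref{th:intro-dynamic} and Example~\ref{ex:transport}, so the work splits into three essentially independent tasks: (a) verifying the structural hypotheses (existence/uniqueness of solutions and well-posedness of the linearized McKean--Vlasov SDE, i.e. assumption (1)), (b) producing the quadratic transport inequality for $\mu_t$ that, via Example~\ref{ex:transport} with $\rho(x,y)=\|x-y\|_T$, yields assumption (3) together with a workable constant $\transpconst$, and (c) bounding the square-integrability constant $M$ of assumption (2). Task (a) is routine: under the global Lipschitz bound \eqref{asmp:Lipschitz-dyn} and the affine-growth consequence \eqref{asmp:Lipschitz-dyn2}, the $n$-particle SDE \eqref{def:introSDE-nonMarkov} has a unique strong solution by standard fixed-point arguments on $\C_T^{dn}$, the McKean--Vlasov SDE \eqref{def:introMV-nonMarkov} has a unique solution starting from any integrable law by the classical Sznitman-type contraction in $\W_1$ on $\P(\C_T^d)$, and the linearized drift $\overline b(t,x)=b_0(t,x)+\langle\mu,b(t,x,\cdot)\rangle$ is again globally Lipschitz in the path (with the same constant, using that $\langle\mu,b(t,\cdot,\cdot)\rangle$ inherits Lipschitzness in its first argument), hence $\mathrm{SDE}(\overline b^{\otimes k})$ is well-posed in the sense of Definition~\ref{def:SDEwellposed} for every $k$; this is where I would also record the first/second moment bounds on $\mu$ and $P^{(1)}$ needed later.

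The heart of the argument is task (b). I would show that the law $\mu_t$ of the McKean--Vlasov solution at each time $t\in[0,T]$, and more importantly the path-law $\mu$ on $\C_T^d$, satisfies a quadratic ($T_2$) transport inequality $\W_2^2(\nu,\mu)\le\eta\,H(\nu\,|\,\mu)$ with $\eta = (\eta_0\vee 2T)e^{3TL^2}$ (up to the exact constants claimed). The mechanism is the standard one: $T_2$ is stable under pushforward by Lipschitz maps and tensorizes, and the solution map that sends the driving Brownian path plus initial condition to the solution path of \eqref{def:introMV-nonMarkov} is Lipschitz from $\R^d\times\C_T^d$ (with the Cameron--Martin-weighted structure) into $\C_T^d$, with Lipschitz constant controlled by $e^{cTL^2}$ via Gronwall. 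Concretely: Wiener measure on $\C_T^d$ satisfies $T_2$ with constant (on the order of) $T$ by Talagrand/Feyel--Üstünel, the initial law $\mu_0$ satisfies $T_2$ with constant $\eta_0$ by hypothesis \eqref{def:dyn-init-T2H}, so the product $\mu_0\otimes(\text{Wiener})$ satisfies $T_2$ with constant $\eta_0\vee 2T$ (or a harmless multiple), and pushing forward by the $e^{cTL^2}$-Lipschitz solution map multiplies the constant by $e^{2cTL^2}$; taking $c$ to match gives $\eta=3L^2(\eta_0\vee 2T)e^{3TL^2}$ after also absorbing the factor $L^2$ from the Lipschitz dependence of $b(t,x,\cdot)$ in Example~\ref{ex:transport}, i.e. $\transpconst=L^2\eta$. (I would be careful that the same $T_2$ constant for the path-law $\mu$ is what enters \eqref{asmp:dyn-transp}, since $b$ is path-dependent; Example~\ref{ex:transport} is stated exactly for this.) For the left inequality in \eqref{ineq:Lipschitz}, the $k$-fold tensorization of $\W_{1,\rho}^2\le\transpconst' H$-type inequalities gives $\W_2^2(P^{(k)},\mu^{\otimes k})\le (\transpconst/L^2) H(P^{(k)}\,|\,\mu^{\otimes k})$ directly from additivity of $H$ over product reference measures and subadditivity of $\W_2^2$ over products.

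Task (c), the bound on $M$, is a moment estimate: using \eqref{asmp:Lipschitz-dyn}--\eqref{asmp:Lipschitz-dyn2} one gets $|b(t,x,y)-\langle\mu,b(t,x,\cdot)\rangle|\le L\,\langle\mu,\|y-\cdot\|_T\rangle\le L(\|y\|_T+\langle\mu,\|\cdot\|_T\rangle)$, so $M\le 2L^2\big(\sup_t\E\|X^1\|_T^2 + \E_\mu\|\cdot\|_T^2\big)$ up to constants; a Gronwall estimate on \eqref{def:introSDE-nonMarkov} and \eqref{def:introMV-nonMarkov} bounds the running second moments by $e^{cTL^2}$ times $\int|x|^2\,P^{(1)}_0(dx)+\int|x|^2\,\mu_0(dx)+M_0^2+dT$, using Doob's $L^2$-inequality for the Brownian term, which produces the claimed formula for $M$ with the factor $8L^2 e^{16TL^2}$ and the additive $8dT$. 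Once (a), (b), (c) are in hand, Theorem~\ref{th:intro-dynamic} applies verbatim with these $(M,\transpconst,C_0)$ and yields \eqref{eq:dyn-result}, which is the right-hand inequality of \eqref{ineq:Lipschitz}; combining with the transport step of task (b) gives the left-hand inequality.

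The main obstacle I anticipate is task (b): pinning down the $T_2$ inequality for the \emph{path-space} law $\mu$ with an explicit, clean constant. The stability-under-Lipschitz-pushforward argument is conceptually routine, but making the constants come out as stated requires care with (i) the correct $T_2$ constant of Wiener measure on $\C_T^d$ under the uniform norm (as opposed to $L^2([0,T])$), (ii) the precise Lipschitz constant of the Itô solution map, which involves a Gronwall argument that must absorb both the drift's path-dependence and the $\langle\mu,\cdot\rangle$ averaging without the constant blowing up, and (iii) the bookkeeping that combines the initial-data constant $\eta_0$ additively with the Wiener constant before the exponential amplification. A secondary subtlety is checking assumption (1) carefully — that uniqueness in law (not pathwise) suffices for $\mathrm{SDE}(\overline b^{\otimes k})$ started from an arbitrary frozen path $z$ on $[0,t]$ — but under global Lipschitz drift this is immediate from strong well-posedness, so it is not a real difficulty here.
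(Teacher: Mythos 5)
Your proposal is correct in outline and decomposes the work in essentially the same three pieces as the paper (well-posedness, transport inequality, $M$-bound). Tasks (a) and (c) coincide with the paper's argument. The interesting divergence is in task (b), the quadratic transport inequality for the path law $\mu$. You propose to obtain it by pushing forward the product measure $\mu_0\otimes(\text{Wiener})$ through the Lipschitz It\^o solution map, appealing to (i) the $T_2$ inequality for Wiener measure in the sup-norm, (ii) tensorization of $T_2$, and (iii) Lipschitz stability of $T_2$. The paper instead proves a self-contained Proposition~\ref{pr:T2-randominit}: given any $Q\ll P$ on path space, it constructs an explicit coupling of $(Q,P)$ by writing $dQ/dP$ via martingale representation, coupling the initial laws optimally using \eqref{def:dyn-init-T2H}, solving the SDE for the $P$-marginal from the coupled start using the $Q$-noise, and closing with Gronwall plus the Girsanov entropy identity of Lemma~\ref{le:entropy-diffusions}(ii). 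These are genuinely different routes: your factorization through Wiener measure is modular and conceptually cleaner, but it leans on an external result ($T_2$ for Wiener measure in the uniform norm, due to Pal/\"Ust\"unel) that is itself proved by exactly the kind of Girsanov argument the paper runs directly — so there is no net saving of machinery, only a repackaging. There is also a quantitative consequence worth noting: applying Gronwall to the (unsquared) path distance and then squaring yields a solution-map Lipschitz constant of order $e^{LT}$, hence a $T_2$ constant for $\mu$ of order $(\eta_0\vee 2T)\,e^{2LT}$, whereas the paper applies Gronwall to the squared distance and lands on $3(\eta_0\vee 2T)\,e^{3TL^2}$; neither dominates the other for all $L$, so your route does not reproduce the corollary's stated constant $\transpconst$ verbatim (for $L<2/3$ it is slightly worse, for $L>2/3$ slightly better). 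That is a bookkeeping discrepancy rather than a mathematical gap — the $O((k/n)^2)$ conclusion is unaffected — but it does mean the corollary as literally stated would need either the paper's proposition or a matching reparametrization. One small slip in your write-up: you write $\eta=3L^2(\eta_0\vee 2T)e^{3TL^2}$ and then $\transpconst=L^2\eta$, which would give an extra $L^2$; the paper's $\eta$ has no $L^2$ factor, with the $L^2$ entering only once, via Kantorovich duality, when passing from the $T_2$ for $\mu$ to the functional inequality \eqref{asmp:dyn-transp}.
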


The proof, given in Section \ref{se:proofs:Lipschitz}, uses a known method based on Girsanov's theorem \cite{djellout2004transportation,ustunel2012transportation,pal2012concentration} to deduce that $\mu$ satisfies a quadratic transport inequality  with respect to the sup-norm; see also \cite{bartl2020functional,bahlali2020quadratic} for recent results on quadratic transport inequalities for SDEs with irregular coefficients, which may be useful in deriving additional corollaries.
Interestingly, the (tensorized) quadratic transport inequality is useful not only for checking assumption (3) of Theorem \ref{th:intro-dynamic}, but also for transferring the main estimate from entropy to Wasserstein distance, yielding the first inequality \eqref{ineq:Lipschitz}. 
It is natural to wonder if the same rate $\W_2^2(P^{(k)}, \mu^{\otimes k}) = O((k/n)^2)$ can be proven without assuming entropic chaos \eqref{asmp:dyn-init} for the initial condition, but rather its analogue in transport distance.
This is an open problem.

\begin{example}[Gaussian case] \label{ex:gaussian-dyn}
Suppose $b_0(t,x)=-ax_t$ and $b(t,x,y)=-by_t$, where $a,b > 0$ are constants. 
Consider the initial conditions $P^{(n)}_0=\delta_0$ and $\mu_0 = \delta_0$.
This clearly fits the assumptions of Corollary \ref{co:Lipschitz-dyn}.
In Section \ref{se:gaussian} we show that, if $n\to\infty$ and $k/n\to 0$, then 
\begin{align}
\liminf_{n\to\infty} \, (n/k)^2 \W_2^2(P^{(k)} \,|\,\mu^{\otimes k}) > 0, \label{ex:Gaussian-claim}
\end{align}
for each $t > 0$.
Thus, the rate $(k/n)^2$ obtained in Corollary \ref{co:Lipschitz-dyn} and Theorem \ref{th:intro-dynamic} cannot be improved in general, even when the initial positions are i.i.d.
\end{example}

\begin{remark} \label{re:transport-rate}
In Corollary \ref{co:Lipschitz-dyn}, the estimate \eqref{ineq:Lipschitz} comes from the \emph{dimension-free} tensorization property of the quadratic transport inequality.
However, even in the setting of a (weaker) first-order transport inequality as in Example \ref{ex:transport}, a dimension-dependent tensorization is possible: From the metric $\rho$ in Example \ref{ex:transport} we can define the corresponding $\ell_1$-metric on $(\C_T^d)^k$ for each $k \in \N$ by
\begin{align*}
\rho_{1,k}\big((x^1,\ldots,x^k),(y^1,\ldots,y^k)\big) := \sum_{i=1}^k \rho(x^i,y^i).
\end{align*}
Define the Wasserstein distance $\W_{1,\rho_{1,k}}$ on $\P((\C_T^d)^k)$ using this metric. Then, it is well known \cite[Proposition 1.9]{gozlan-leonard} that the inequality $\W_{1,\rho}^2(\cdot,\mu) \le \transpconst' H(\cdot\,|\,\mu)$ implies $\W_{1,\rho_{1,k}}^2(\cdot,\mu^{\otimes k}) \le k\transpconst' H(\cdot\,|\,\mu^{\otimes k})$ for each $k$.
In this case, the conclusion $H(P^{(k)} \,|\,\mu^{\otimes k})=O((k/n)^2)$ of Theorem \ref{th:intro-dynamic} along with this transport inequality yield $\W_{1,\rho_{1,k}}(P^{(k)},\mu^{\otimes k}) = O(k^{3/2}/n)$. 
When $\rho$ is the discrete metric, for instance, $\rho_{1,k}$ is the so-called Hamming metric.
When $\rho$ is the sup-norm, the $\ell_1$-norm $\rho_{1,k}$ bounds from above our default $\ell_2$-sup-norm on $(\C_T^d)^k\cong \C_T^{dk}$, so we may write $\W_{1,\rho_{1,k}} \ge \W_1$.
\end{remark}

As a final corollary of Theorem \ref{th:intro-dynamic}, we discuss a class of discontinuous linear growth interactions for which even \emph{qualitative} propagation of chaos was previously unknown. In fact, even existence and uniqueness for the McKean-Vlasov equation are new here. Because of this we label the following a \emph{theorem} rather than a \emph{corollary}, though the propagation chaos part follows quickly from Theorem \ref{th:intro-dynamic}.

\begin{theorem} \label{th:sublinear}
Suppose $b_0$ and $b$ satisfy, for some $0 < K < \infty$,
\begin{align}
\begin{split}
|b_0(t,x)| + |b(t,x,y)| &\le K\left(1 +  \|x\|_t +  \|y\|_t\right)  \\
|b(t,x,y)-b(t,x,y')| &\le K\left(1 +  \|y\|_t +  \|y'\|_t\right)
\end{split} \Bigg\}
\qquad \forall t \in [0,T], \ x,y,y' \in \C_T^d. \label{condition:lingrowth}
\end{align}
Then the SDE \eqref{def:introSDE-nonMarkov} admits a unique in law weak solution from any initial distribution, and the  McKean-Vlasov SDE \eqref{def:introMV-nonMarkov} admits a unique in law weak solution from any initial distribution $\mu_0 \in \P(\R^d)$ satisfying
\begin{align}
R := \int_{\R^d} e^{\kappa_0 |x|^2}\,\mu_0(dx) < \infty, \ \text{ for some } \kappa_0 > 0. \label{def:dyn-expinteg-time0}
\end{align}
Let $P^{(n)}$ and $\mu$, respectively, denote solutions whose initial distributions satisfy condition (4) of Theorem \ref{th:intro-dynamic}, with $P^{(n)}_0$ exchangeable and with $\mu_0$ satisfying \eqref{def:dyn-expinteg-time0}.
Then there exist constants $0 < C,\transpconst < \infty$, depending only on $(C_0,K,c,\kappa_0,R,T,d)$ and the second moments of $P^{(1)}_0$ and $\mu_0$ (but not on $n$ or $k$), such that the following holds: If $n \ge 6 e^{\transpconst T}$, then for integers $k \in [1,n]$ we have 
\begin{align*}
\frac{1}{Ck}\W_1^2(P^{(k)} ,\mu^{\otimes k}) \le H(P^{(k)} \,|\,\mu^{\otimes k}) \le C\frac{k^2}{n^2} +  C\exp\bigg(-2n\Big(e^{-\transpconst T} - \frac{k}{n}\Big)_+^2\bigg),
\end{align*}
\end{theorem}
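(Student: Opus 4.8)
The plan is to derive Theorem \ref{th:sublinear} as a consequence of Theorem \ref{th:intro-dynamic}, so the work splits into two essentially separate tasks: (i) establishing existence and uniqueness (in law) for the $n$-particle SDE \eqref{def:introSDE-nonMarkov} and the McKean-Vlasov equation \eqref{def:introMV-nonMarkov} under the linear-growth conditions \eqref{condition:lingrowth} and the Gaussian-type initial integrability \eqref{def:dyn-expinteg-time0}, together with well-posedness of the linearized equation (hypothesis (1) of Theorem \ref{th:intro-dynamic}); and (ii) verifying the quantitative hypotheses (2) and (3), namely the $L^2$ bound $M < \infty$ and the transport-type inequality \eqref{asmp:dyn-transp}, with constants independent of $n$.

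For task (i), since the drifts in \eqref{condition:lingrowth} are merely measurable (not even continuous) but of linear growth, I would invoke a Girsanov/Novikov argument rather than a fixed-point or martingale-problem compactness scheme. Concretely, starting from $n$ (or one) independent Brownian motions with the prescribed initial law, the candidate solution is obtained by a change of measure whose density is the stochastic exponential of $\int \overline{b}\,dW$; the linear-growth bound plus the exponential moment \eqref{def:dyn-expinteg-time0} furnishes Novikov's condition on $[0,T]$ (one first proves a Gaussian-type tail bound $\E[\exp(\kappa\|X\|_T^2)]<\infty$ for the solution via a Gronwall-type estimate on $\E[\exp(\kappa\sup_{s\le t}|X_s|^2)]$, then feeds it into the quadratic-growth drift). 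Uniqueness in law follows because any weak solution has a law absolutely continuous with respect to Wiener measure (the integrability hypothesis on solutions built into the definition forces this), and the Girsanov density is uniquely determined by the drift. The same argument applied to the frozen-past SDE of Definition \ref{def:SDEwellposed} with drift $\overline{b}^{\otimes k}(t,x)=(b_0(t,x^i)+\langle\mu,b(t,x^i,\cdot)\rangle)_{i\le k}$ — noting $\langle\mu,b(t,x,\cdot)\rangle$ inherits linear growth in $x$ from \eqref{condition:lingrowth} once $\mu$ has a first moment — gives hypothesis (1). For the McKean-Vlasov equation itself, I would set up a Picard iteration on the flow $t\mapsto\mu_t$ in a space of measures with uniform Gaussian moments, using the second line of \eqref{condition:lingrowth} (the $y$-modulus bound) to get a contraction in a suitable weighted $\W_1$ on path space; alternatively, existence by compactness plus the Girsanov-based uniqueness.

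For task (ii): the square-integrability $M<\infty$ in \eqref{asmp:dyn-moment} reduces, via $|b(t,x,y)-\langle\mu,b(t,x,\cdot)\rangle|^2\le 2|b(t,x,y)|^2+2\langle\mu,|b(t,x,\cdot)|\rangle^2\lesssim 1+\|x\|_t^2+\|y\|_t^2+\langle\mu,\|\cdot\|_T^2\rangle$, to uniform-in-$n$ second-moment bounds $\sup_n\int\|x\|_T^2\,P^{(1)}(dx)<\infty$ and $\int\|x\|_T^2\,\mu(dx)<\infty$; the former is the standard a priori moment estimate for \eqref{def:introSDE-nonMarkov}, uniform in $n$ because the normalization $\tfrac1{n-1}\sum_{j\neq i}$ keeps the mean-field term of order $\sup_i\E\|X^i\|_T$ — here I would run a Gronwall argument on $\phi(t):=\sup_i\E\|X^i\|_t^2$ using exchangeability. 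The transport inequality \eqref{asmp:dyn-transp} is the crux: by Lemma \ref{le:integ-transp-equiv} it suffices to prove a uniform square-exponential integrability $\sup_{t,x}\int\exp(\kappa|b(t,x,y)-\langle\mu,b(t,x,\cdot)\rangle|^2)\,\mu(dy)\le R'$. Since by \eqref{condition:lingrowth} $|b(t,x,y)-\langle\mu,b(t,x,\cdot)\rangle|\le K(1+2\|y\|_T)+K\langle\mu,\|\cdot\|_T\rangle$, this follows once $\mu$ (on path space) has a Gaussian tail $\int\exp(\kappa'\|y\|_T^2)\,\mu(dy)<\infty$ for some $\kappa'>0$ — and this is exactly the propagation-of-exponential-moments estimate that also underlies task (i): apply the deterministic Gronwall-type inequality to $v(t):=\sup_{s\le t}|X_s|$ for the McKean-Vlasov process, using $|X_t|\le|X_0|+\int_0^t K(1+|X_s|+\langle\mu_s,|\cdot|\rangle)ds+|W_t|$, to get $\|X\|_T\le e^{KT}(|X_0|+KT+KT\sup_s\langle\mu_s,|\cdot|\rangle+\|W\|_T)$, then take exponential second moments using \eqref{def:dyn-expinteg-time0} for $|X_0|$ and the Gaussian tail of $\|W\|_T$. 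Once (1)–(4) are verified (assumption (4) being hypothesis of the theorem), Theorem \ref{th:intro-dynamic} gives the entropy bound directly; the first inequality $\W_1^2(P^{(k)},\mu^{\otimes k})\le Ck\,H(P^{(k)}\,|\,\mu^{\otimes k})$ then comes from a first-order transport inequality $\W_1^2(\cdot,\mu)\le\transpconst'H(\cdot\,|\,\mu)$ for $\mu$ tensorized with the dimension-dependent factor $k$ as in Remark \ref{re:transport-rate}, and such a $\W_1$-transport inequality for $\mu$ follows from its Gaussian tail by the Bolley-Villani / Djellout-Guillin-Wu criterion.

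The main obstacle I expect is task (i) — specifically, obtaining well-posedness of \eqref{def:introMV-nonMarkov} under merely measurable, linear-growth-with-a-$y$-modulus coefficients, which is genuinely new (as the theorem statement emphasizes). The delicate point is that the map $\nu\mapsto\langle\nu,b(t,x,\cdot)\rangle$ is only Lipschitz in $\nu$ in a $\W_1$-type sense with a growth-weighted constant because of the second line of \eqref{condition:lingrowth}, so the Picard contraction must be run in a carefully weighted metric on path-measures with controlled Gaussian moments, and one must propagate the exponential integrability along the iteration — not just the existence — to keep the fixed-point space invariant; this couples the well-posedness argument tightly to the exponential-moment estimate that is also needed for hypothesis (3). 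Everything else (uniform moments, $M<\infty$, verifying \eqref{asmp:dyn-transp} via Lemma \ref{le:integ-transp-equiv}, the $\W_1$ lower bound) is a matter of assembling standard Gronwall and Girsanov estimates.
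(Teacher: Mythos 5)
Most of your plan coincides with the paper's actual proof: the $n$-particle SDE is handled by Girsanov (Bene\v{s} existence, Liptser--Shiryaev uniqueness), hypothesis (2) by uniform second-moment Gronwall bounds, hypothesis (3) by propagating the Gaussian moment \eqref{def:dyn-expinteg-time0} to $\int e^{c\|x\|_T^2}\mu(dx)<\infty$ via a pathwise Gronwall estimate plus Fernique and then invoking Lemma \ref{le:integ-transp-equiv}, and the $\W_1$ lower bound by the integral criterion for a $T_1$ inequality tensorized with factor $k$. The genuine gap is in your treatment of well-posedness of the McKean--Vlasov equation \eqref{def:introMV-nonMarkov}. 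Your key claim that the second line of \eqref{condition:lingrowth} makes $\nu\mapsto\langle\nu,b(t,x,\cdot)\rangle$ ``Lipschitz in a $\W_1$-type sense with a growth-weighted constant'' is false: that condition is only a bound on the \emph{size} of oscillations (uniform in $x$), not a modulus of continuity in $y$. The drift $b(t,x,\cdot)$ may be an arbitrarily discontinuous measurable function of linear growth, so $\langle\nu-\nu',b(t,x,\cdot)\rangle$ cannot be controlled by any Wasserstein distance between $\nu$ and $\nu'$, and a Picard contraction in a weighted $\W_1$ on path-measures cannot close. Your fallback, ``compactness plus Girsanov-based uniqueness,'' suffers from the same defect: weak (or Wasserstein) convergence of approximations does not allow passing to the limit in $\langle\mu^n,b(t,x,\cdot)\rangle$ for merely measurable $b$, and Girsanov only gives uniqueness of the \emph{linearized} SDE for a frozen $\mu$; uniqueness of the nonlinear fixed point requires a stability estimate comparing two candidate solutions, which is exactly what a Wasserstein argument cannot supply here.

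The paper's route (Proposition \ref{pr:MVwellposed-sublinear}) replaces Wasserstein by total variation/relative entropy, which is the topology in which a merely measurable drift functional is stable. Uniqueness: for two solutions $\mu^1,\mu^2$, the Girsanov entropy identity gives $H(\mu^1[t]\,|\,\mu^2[t])=\tfrac12\int\!\!\int_0^t|\langle\mu^1-\mu^2,b(s,x,\cdot)\rangle|^2ds\,d\mu^1$, and the weighted Pinsker inequality \eqref{ineq:weightedPinsker} --- made applicable by the exponential square moment you did derive --- closes a Gronwall loop in relative entropy (cf.\ Remark \ref{re:MVuniqueness}). Existence: truncate $b^n:=(b\wedge n)\vee(-n)$, solve the bounded case by a total-variation Picard iteration (Proposition \ref{pr:MVwellposed-bounded}), show $(\mu^n)$ is Cauchy in total variation by the same entropy plus weighted-Pinsker estimates (uniform in the truncation level thanks to the uniform Gaussian moment), upgrade the convergence to linear-growth test functions using the Gibbs variational formula together with a uniform bound on $H(\mu^n\,|\,\overline\mu)$ relative to the drift-$b_0$ law $\overline\mu$, and only then pass to the limit in the martingale problem. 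So the exponential-moment machinery you set up for hypothesis \eqref{asmp:dyn-transp} is indeed the right ingredient, but it must be fed into an entropy/TV fixed-point and limiting argument rather than a weighted-$\W_1$ contraction; as written, your well-posedness step would fail precisely for the discontinuous interactions this theorem is designed to cover.
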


The proof of propagation of chaos is given in Section \ref{se:proofs:sublinear}, and existence and uniqueness of the McKean-Vlasov equation are proven in Section \ref{se:wellposedness}.
The estimate $\W_1(P^{(k)} ,\mu^{\otimes k}) = O(k^{3/2}/n)$ is consistent with the discussion of Remark \ref{re:transport-rate}.
The condition \eqref{condition:lingrowth} covers all cases of uniformly continuous functions $(b_0,b)$, a class for which even \emph{qualitative} propagation of chaos was apparently previously unknown. In particular, Theorem \ref{th:sublinear}  yields a local rate of propagation of chaos which complements a global estimate recently given in \cite{holding2016propagation} for H\"older interactions.

\begin{remark} \label{re:vlasov1}
Our method can easily handle \emph{kinetic} models, where instead of the SDE \eqref{def:introSDE-nonMarkov} one considers
\begin{align*}
dX^i_t &= V^i_tdt \\
dV^i_t &= \bigg( b_0(t,X^i,V^i) + \frac{1}{n-1}\sum_{j \neq i}b(t,X^i,X^j)\bigg)dt +  dW^i_t.
\end{align*}
In this case, the McKean-Vlasov equation (in PDE form) is often called the \emph{Vlasov}-Fokker-Planck equation.
The main theorems and proofs adapt without change. See Remark \ref{re:vlasov2} for additional comments, and see \cite{bolley2011stochastic,jabin-wang-bounded,hauray2019propagation} for some recent results and references on quantitative propagation of chaos for this kind of model.
\end{remark}

\begin{remark}
We work with identity diffusion coefficient for simplicity, but analogous results for any constant non-degenerate matrix follow by a simple scaling argument. The methods could also handle time- and state-dependent diffusion coefficient $\sigma(t,X^i)$, as long as the spectrum of $\sigma\sigma^\top$ is uniformly bounded above and away from zero. The key entropy estimates, however, break down if there are interactions in $\sigma$. 
\end{remark}

\begin{remark} \label{re:wellposedness}
The well-posedness assumption (1) in Theorem \ref{th:intro-dynamic} could likely be relaxed. It is used only to ensure that the relative entropy between $P^{(n)}$ and $\mu^{\otimes n}$ takes the desired form (see Lemma \ref{le:entropy-diffusions}).  Girsanov's theorem yields  this desired form immediately for bounded coefficients, but the unbounded case is delicate.
If the well-posedness assumption is removed, it may still be possible to construct a particular solution $P^{(n)}$ satisfying a desirable entropy estimate, perhaps by taking advantage of the ideas of \cite{ruf2015martingale}, and Theorem \ref{th:intro-dynamic} could then be argued for this particular solution.
This speculation is inspired in part by the approach of \cite{jabin-wang-bounded,jabin-wang-W1inf}, which proves propagation of chaos only for (potentially non-unique) solutions $P^{(n)}$ which obey a certain entropy estimate. See also \cite[Remark 1.1]{jabir2019rate}.
\end{remark}

\subsection{Reversing the relative entropy} \label{se:rev}

A result similar to Theorem \ref{th:intro-dynamic} can be obtained when the order of arguments in the relative entropy is reversed.
We will treat only the case of bounded $b$ here, though see Remark \ref{re:relaxingboundedness} for some discussion of a generalization.

\begin{theorem} \label{th:intro-dynamic-rev}
Suppose $b$ is uniformly bounded.
Assume the SDE$(b_0^{\otimes k})$ is well-posed in the sense of Definition \ref{def:SDEwellposed}, for each $k \in \N$.
Then the SDE \eqref{def:introSDE-nonMarkov} and the  McKean-Vlasov SDE \eqref{def:introMV-nonMarkov} are well-posed (in the sense of Definition \ref{def:SDEwellposed}).
Let $P^{(n)}$ and $\mu$, respectively, denote solutions whose initial distributions satisfy
\begin{align}
H(\mu^{\otimes k}_0\,|\,P^{(k)}_0) &\le C_0 k^2/n^2, \quad \text{for } k=1,\ldots,n, \label{asmp:dyn-init-rev}
\end{align}
for some $C_0 < \infty$. Assume also that $P^{(n)}$ is exchangeable.
Then, for integers $k \in [1,n]$, we have 
\begin{align*}
H(\mu^{\otimes k} \,|\,P^{(k)}) &\le  2C \frac{k^2}{n^2} + C \exp\bigg(-2n\Big(e^{-\transpconst T} - \frac{k}{n}\Big)_+^2\bigg)
\end{align*}
where we define the constants $\transpconst := 2\||b|^2\|_\infty$ and $C := (C_0+ 2\transpconst T )e^{2 \transpconst T}$.
\end{theorem}

The proof is given in Section \ref{se:proofs-dyn}.
Of course, by Pinsker's inequality, the estimate \eqref{asmp:dyn-init-rev} on the ``reversed" entropy implies the same total variation bound $\|P^{(k)}-\mu^{\otimes k}\|_{\mathrm{TV}} = O(k/n)$ as one obtains from the ``non-reversed" entropy.
And although both share the same general strategy, Theorem \ref{th:intro-dynamic-rev} has the advantage of a somewhat simpler proof compared to Theorem \ref{th:intro-dynamic}, as mentioned at the end of Section \ref{se:ideas},
because the natural estimates of the reversed entropy $H(\mu^{\otimes k} \,|\,P^{(k)})$ involve integrals with respect to the \emph{product measure} $\mu^{\otimes k}$ as opposed to $P^{(k)}$.
In other ways, however, this reversed entropy is not as natural. For instance, the subadditivity inequality \eqref{def:subadditivity-H} fails when the order of arguments is reversed, i.e., when the second argument is not a product measure.

For examples of non-i.i.d.\ initial conditions satisfying the hypothesis \eqref{asmp:dyn-init-rev}, we again refer to the companion paper \cite{lacker-static}. Specifically, in the setting of Remark \ref{re:assumptions}(ii), it is shown in \cite[Corollary 2.10]{lacker-static} that \eqref{asmp:dyn-init-rev} holds as long as $L < \kappa/\sqrt{2}$.

\subsection{Beyond pairwise interactions} \label{se:intro-infrange}

In this section, we study models with more general interactions, described by smooth functionals of the empirical measure. We focus on bounded interaction functions for simplicity.
Again let $b_0 : \R_+ \times \C_T^d \to \R^d$.
Now, let $b : \R_+ \times \C_T^d \times \P(\C_T^d) \to \R^d$ be a bounded Borel function, assumed progressive in the sense that $b(t,x,m)=b(t,x',m')$ whenever $x|_{[0,t]}=x'|_{[0,t]}$ and the measures $m$ and $m'$ have the same image under the restriction map $\C_T^d \to \C_t^d$. 
Consider the SDE system
\begin{align}
dX^i_t &= \big( b_0(t,X^i) + b(t,X^i,L^n)\big) dt +  dW^i_t, \quad L^n = \frac{1}{n}\sum_{j=1}^n\delta_{X^j}. \label{def:mainSDE-finrange}
\end{align}
The corresponding McKean-Vlasov equation is
\begin{align}
dY_t = \big( b_0(t,Y) + b(t,Y,\mu) \big) dt +  dW_t, \quad \mu=\mathrm{Law}(Y). \label{def:MV-finrange}
\end{align}
Our main Theorem \ref{th:intro-infrange} below will assume, as in Section \ref{se:intro-dyn}, that solutions of \eqref{def:mainSDE-finrange} and \eqref{def:MV-finrange} exist, with certain a priori assumptions on the solutions.
The main new assumption in this section will be that $b$ \emph{admits a power series expansion around} $\mu$, which we will take to mean that there exist bounded progressively measurable functions $b_\ell : \R_+ \times \C_T^d \times (\C_T^d)^\ell \to \R^d$ with $\|b_\ell\|_\infty \le 2^{-\ell}$, and constants $s_\ell \ge 0$, such that $\sum_{\ell=1}^\infty s_\ell < \infty$ and
\begin{align}
b(t,x,m) - b(t,x,\mu) \!=\! \sum_{\ell=1}^\infty s_\ell \langle (m-\mu)^{\otimes \ell},b_\ell(t,x,\cdot)\rangle, \ \ \forall (t,x,m) \in \R_+ \times \C_T^d \times \P(\C_T^d). \label{def:powerseries}
\end{align}
Note that $s_\ell$ and $b_\ell$ may depend on $\mu$, and we may assume without loss of generality that $b_\ell(t,x,\cdot)$ is a symmetric function of its $\ell$ coordinates, for each $(t,x)$.

\begin{example} \label{ex:flatderivative}
The setting \eqref{def:powerseries} arises most naturally when, for each $(t,x)$, the function $b(t,x,\cdot)$ admits a bounded flat derivative of every order. Here, for $\ell \in \N$, a function $F : \P(\C_T^d) \to \R^d$ is said to \emph{admit a bounded flat derivative of order $\ell$} if there exists a bounded measurable function $\delta_m^\ell F : \P(\C_T^d) \times (\C_T^d)^\ell \to \R^d$ such that
\begin{align*}
\frac{d^\ell}{dh^\ell}\Big|_{h=0}^+F((1-h)m + hm') = \langle (m'-m)^{\otimes \ell}, \delta^\ell_m F(m,\cdot)\rangle, \quad \forall m,m' \in \P(\C_T^d),
\end{align*}
where the $+$ symbol indicates that the derivative is taken from the right. The flat derivative is also known as the \emph{linear functional derivative}; see \cite[Section 2]{chassagneux2019weak} for background. Note that if $F$ admits a bounded flat derivative of every order, and if $\|\delta_m^\ell F\|_\infty /\ell! \to 0$, then a Taylor expansion (cf.\ \cite[Lemma 2.2]{chassagneux2019weak}) yields
\begin{align*}
F(m')-F(m) = \sum_{\ell=1}^\infty \frac{1}{\ell!} \langle (m'-m)^{\otimes \ell}, \delta^\ell_m F(m,\cdot)\rangle.
\end{align*}
In particular, \eqref{def:powerseries} holds if $b(t,x,\cdot)$ admits a bounded flat derivative of every order, with $\|\delta_m^\ell b(t,x,m,\cdot)\|_\infty/\ell! \to 0$ as $\ell \to \infty$.
\end{example}

\begin{example} \label{ex:analyticfunc}
Consider a measurable function $f : \R^d \times \R^d \to [-1,1]$, and let
\begin{align*}
b(t,x,m) := G( \langle m_t,\, f(x_t,\cdot)\rangle), \quad m \in \P(\C_T^d), \ x \in \C_T^d,
\end{align*}
for some analytic function $G$ defined on an open neighborhood of $[-1,1]$.
As this is a special case of Example \ref{ex:flatderivative}, we have the power series
\begin{align*}
b(t,x,m) - b(t,x,\mu) = \sum_{\ell=1}^\infty \frac{1}{\ell!} G^{(\ell)}( \langle \mu_t,\, f(x_t,\cdot)\rangle) \prod_{i=1}^\ell \langle m_t -\mu_t, f(x_t,\cdot)\rangle.
\end{align*}
This fits the setting of \eqref{def:powerseries}, if we define $s_\ell = 2^{\ell}\|G^{(\ell)}\|_\infty/ \ell!$ and
\begin{align*}
b_\ell(t,x,x^1,\ldots,x^\ell) := 2^{-\ell}\|G^{(\ell)}\|_\infty^{-1} G^{(\ell)}( \langle \mu_t,\, f(x_t,\cdot)\rangle) \prod_{i=1}^\ell f(x,x^i).
\end{align*}
A noteworthy example comes from rank-based interactions, where $d=1$ and $f(x,y) = 1_{(-\infty,x]}(y)$, and so $b(t,m,x) = G(m_t(-\infty,x_t])$. This example arises in particle approximations of Burgers-type PDEs  \cite{shkolnikov2012large,jourdain2013propagation}.
\end{example}

The rate of decay of the coefficients $(s_\ell)_{\ell \in \N}$ will influence the rate of propagation of chaos that our method can achieve. For  $x, p \ge 0$ define 
\begin{align}
\bar{s}_p := \sum_{\ell=1}^\infty \ell^p s_\ell, \qquad \tailfn_0(x) := \frac{1}{\bar{s}_0} \sum_{\ell=\lfloor x\rfloor+1}^{\infty} s_\ell. \label{def:tailfn}
\end{align}
Note that $\lim_{x \to \infty} \tailfn_0(x)=0$, as we assumed that $\bar{s}_0 < \infty$. 
We state first a general theorem to illustrate the similar nature of the estimate to Theorem \ref{th:intro-dynamic}, and then Corollary \ref{co:infrange-subexp} gives a more concrete implication.

\begin{theorem} \label{th:intro-infrange}
Assume $b$ is a bounded progressive function as described above. 
Assume that there exists a weak solution of the SDE \eqref{def:mainSDE-finrange} whose law $P^{(n)} \in \P((\C_T^d)^n)$ is exchangeable, and that there exists a weak solution of the McKean-Vlasov equation \eqref{def:MV-finrange} with law $\mu \in \P(\C_T^d)$.
Assume $b$ satisfies \eqref{def:powerseries}.
Assume the SDE$(b_0^{\otimes k})$ is well-posed in the sense of Definition \ref{def:SDEwellposed}, for each $k =1,\ldots,n$.
Assume finally that  there exists $C_0 < \infty$ such that 
\begin{align}
H(P^{(k)}_0\,|\,\mu^{\otimes k}_0) &\le C_0 k^3/n^2, \quad \text{for } k=1,\ldots,n. \label{asmp:infrange-init}
\end{align}
Then, for $k,\ell \in \N$ satisfying $1 \le \ell + k \le n/2$, we have
\begin{align}
\begin{split}
H(P^{(k)} \,|\, \mu^{\otimes k}) &\le \big(24 C_0\bar{s}_0^2  +  4 (\bar{s}_2/\bar{s}_0)^2\big) e^{12\bar{s}_0^2 \ell T}\frac{k^3}{n^2}  +  e^{4\bar{s}_0^2 \ell T} \tailfn_0^2(\ell)\frac{k}{\ell} \\
	&\quad +  n\left(C_0 + \bar{s}_0^2 T  \right)\exp\bigg(-  \frac{n}{\ell} \bigg(e^{-4 \bar{s}_0^2 \ell T} - \frac{k}{2n } \bigg)_+^2 \bigg).
\end{split} \label{infrange-est}
\end{align}
\end{theorem}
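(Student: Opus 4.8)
The plan is to adapt the proof of Theorem \ref{th:intro-dynamic} to the infinite-range setting, with the main structural change being that we cannot let the entropy hierarchy run all the way up to $k=n$; instead we truncate it at a level $k+\ell$ and absorb the contribution of the interaction terms of order $> \ell$ into an additive error governed by the tail function $\tailfn_0$. First I would write down the BBGKY-type description of $P^{(k)}$: the drift of the $k$-particle marginal SDE is the conditional expectation of the drift in \eqref{def:mainSDE-finrange} given the first $k$ coordinates. Expanding $\frac{s_\ell}{n^\ell}\sum_{S\in[n]^\ell}b_\ell(t,X^i,X^S)$ and conditioning, each index set $S$ splits according to how many of its entries lie in $\{1,\dots,k\}$; the terms where all $\ell$ entries are ``new'' produce $s_\ell\langle (P^{(k+\ell\,|\,k)}_{t,x})\text{-type conditional law},b_\ell(t,x_i,\cdot)\rangle$ after using exchangeability, matching the McKean-Vlasov drift up to replacing $\mu^{\otimes\ell}$ by a conditional law, while the terms with at least one ``old'' entry contribute combinatorial lower-order pieces of size $O(\ell k/n)$ times $s_\ell$.

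Next I would invoke the Girsanov-type entropy estimate (Lemma \ref{le:entropy-diffusions}, justified via the well-posedness of SDE$(b_0^{\otimes k})$ exactly as in the pairwise case) to get $\frac{d}{dt}H^k_t \le \frac12\sum_{i=1}^k\int|\widehat b^k_i - \text{(MV drift)}|^2\,dP^{(k)}_t$. Splitting the integrand into (a) the combinatorial ``old-index'' error, (b) the difference between the conditional law $\mu^{\otimes\ell}$-integral and the true $\mu^{\otimes\ell}$-integral for new indices, and (c) the tail $\sum_{\ell>\ell_0}s_\ell(\cdots)$, I would bound (a) crudely using $\|b_\ell\|_\infty\le1$ and $\sum\ell s_\ell=\bar s_1<\infty$ (this yields the $k^3/(\ell n^2)$ term after the same diagonal/cross-term improvement trick as in the pairwise proof, using $\bar s_2$), bound (c) by $\tailfn_0(\ell)^2$ up to constants (this gives the $\tailfn_0^2(\ell)\,k/\ell$ term), and for (b) use the crucial observation — the analogue of assumption \eqref{asmp:dyn-transp} — that for bounded $b_\ell$ one has $|\langle\nu-\mu^{\otimes\ell},b_\ell(t,x,\cdot)\rangle|^2 \le 2\bar s_0^2\,H(\nu\,|\,\mu^{\otimes\ell})$ by Pinsker, which is automatic here because $|b_\ell|\le1$ and $\sum s_\ell=\bar s_0$, so no extra hypothesis is needed. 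By the chain rule $\int H(P^{(k+\ell\,|\,k)}_{t,x}\,|\,\mu^{\otimes\ell})\,P^{(k)}_t(dx) = H^{k+\ell}_t - H^k_t$, which produces the term $\transpconst k(H^{k+\ell}_t-H^k_t)$ with $\transpconst$ of order $\bar s_0^2$. This gives a differential inequality of the form $\frac{d}{dt}H^k_t \le (\text{source}_k) + c\bar s_0^2 k\,(H^{k+\ell}_t - H^k_t)$, now coupling level $k$ to level $k+\ell$ rather than $k+1$.

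Then I would iterate this inequality Picard-style, as in Section \ref{se:proofs-dyn}: applying Gronwall gives $H^k_t \le e^{-ck\transpconst t}H^k_0 + \int_0^t e^{-ck\transpconst(t-s)}(\text{source}_k + c\transpconst k H^{k+\ell}_s)\,ds$, and substituting repeatedly walks the superscript up in steps of $\ell$, terminating once $k + m\ell$ exceeds $n/2$ (we need $k+\ell\le n/2$ so the combinatorial expansions and the crude global bound $H^{n}_t \le H^{n}_0 + c\bar s_0^2 nT$ remain valid — the restriction $1\le \ell+k\le n/2$ in the statement is exactly what this iteration needs). The number of iterations is about $n/\ell$, the nested exponential integrals contribute the $e^{c\bar s_0^2\ell T}$ factors (since each step carries a factor like $e^{c\transpconst k T}$ and $k\le n$, while the number of steps is $n/\ell$, giving a net $\ell$-dependence through $\transpconst\cdot(\text{step size in }k)$), and the tail of the Picard series — bounded using the global estimate at the truncation level — produces the final $\exp(-\frac{n}{\ell}(e^{-8\bar s_0^2\ell T}-\frac{k}{2n})_+^2)$ term, the $n/\ell$ in the exponent reflecting that we only get $\sim n/\ell$ convolutions of the exponential kernel.

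The main obstacle I expect is the bookkeeping in the combinatorial expansion of the conditional drift: for interactions of order $\ell$ one must carefully account for all ways an index tuple $S\in[n]^\ell$ intersects $\{1,\dots,k+\ell\}$ and which of those tuples get ``revealed'' when conditioning on the first $k$ coordinates, keeping track of repetitions in $S$, and then re-express the leading piece as a genuine $(k+\ell)$-particle conditional expectation so that the chain rule for relative entropy applies cleanly. A secondary subtlety is choosing the right crude bounds so that the three error terms come out with the stated dependence on $\bar s_0,\bar s_2$ and $\tailfn_0$, in particular getting $\bar s_2$ rather than $\bar s_1^2$ in the cross-term estimate, which requires the same second-pass improvement (first prove $H^{k}_t = O(k^3\ell^{-1}n^{-2})$-type bounds, then use them to show the cross-terms in the expansion of the square are each $O(1/n)$) used to pass from $k^3$ to $k^2$ in Theorem \ref{th:intro-dynamic}.
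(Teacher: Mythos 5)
Your structural outline matches the paper's approach: express the $k$-particle drift as a conditional expectation via Lemma \ref{le:proj-pathdep}, apply the Girsanov entropy estimate of Lemma \ref{le:entropy-diffusions}, truncate the interaction series at a level $\bar\ell$, re-express the leading contribution as $\langle P^{(k+\bar\ell\,|\,k)}-\mu^{\otimes\bar\ell},b_\ell\rangle$ so that Pinsker and the chain rule give $\transpconst k\,(H^{k+\bar\ell}_t-H^k_t)$ with $\transpconst\asymp\bar s_0^2$, and then run a Picard iteration in steps of $\bar\ell$, terminated by the crude global bound $H^n_T\le C_0 + 2\bar s_0^2 nT$ and controlled by Proposition \ref{pr:ABestimate} with $a=8\bar s_0^2 k$, $b=8\bar s_0^2\bar\ell$. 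That is exactly what the paper does.

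The concrete error is your claim that the constant $\bar s_2$ ``requires the same second-pass improvement ... used to pass from $k^3$ to $k^2$ in Theorem \ref{th:intro-dynamic}.'' The paper's proof of Theorem \ref{th:intro-infrange} is a single crude pass with \emph{no} self-improvement: that is precisely why the second term of \eqref{infrange-est} carries $k^3/(\ell n^2)$ rather than $k^2/n^2$, and Remark \ref{re:inf-selfimprovement} explicitly defers the Hoeffding-type improvement as an open combinatorial problem. Moreover, even if one carried it out, the improvement would reduce the power of $k$, not change $\bar s_1$ into $\bar s_2$. The $\bar s_2$ in the theorem comes from two plainer places that your decomposition into pieces (a)/(b)/(c) does not cleanly separate. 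First, one must pass from the with-replacement average over $[n]^\ell$ appearing in \eqref{def:mainSDE-finrange} to a without-replacement average over $(1,n]^\ell_{\neq}$ (Lemma \ref{le:combin1}); this costs an error of order $\ell^2/n$ per $\ell$-body term, and summing $s_\ell\ell^2$ yields $\bar s_2$ directly. This replacement-comparison step is not cosmetic: tuples $S$ with repeated entries (even if all ``new'') do not conditionally produce $\langle P^{(k+\ell\,|\,k)},b_\ell\rangle$, so your split of $S$ into ``all new'' versus ``some old'' is not yet sufficient — you need to pass to distinct tuples first, and then split $(1,n]^\ell_{\neq}$ into $(k,n]^\ell_{\neq}$ and the rest (Lemma \ref{le:combin2}). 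Second, the ``old-index'' part $R_1$ is bounded crudely via Lemma \ref{le:combin2} by $\bar s_1^2\,k^3/n^2$, and the final display simply invokes $\bar s_1\le\bar s_2$ to merge constants. A small further bookkeeping note: Pinsker gives $|\langle\nu-\mu^{\otimes\ell},b_\ell(t,x,\cdot)\rangle|^2\le 2H(\nu\,|\,\mu^{\otimes\ell})$ since $|b_\ell|\le 1$; the $\bar s_0^2$ factor in $\transpconst$ enters afterward via Cauchy--Schwarz in $\ell$, not in the Pinsker step itself.
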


The proof of Theorem \ref{th:intro-infrange} is given in Section \ref{se:proofs-inf}.
The last term of \eqref{infrange-est} is somewhat similar to the estimates of Theorems \ref{th:intro-dynamic} and \ref{th:intro-dynamic-rev}. The $k^3$ in the first term, though, is likely suboptimal; see Remark \ref{re:inf-selfimprovement}.
The term in \eqref{infrange-est} involving $\tailfn_0$ is new and depends on the decay rate of $(s_\ell)_{\ell \in \N}$.
Theorem \ref{th:intro-infrange} is clearly of no use unless this term can be sent to zero, or $\tailfn_0^2(\ell)e^{8\bar{s}_0^2\ell T}/\ell \to 0$ as $\ell \to \infty$.
Note that the variable $\ell$ is free in the sense that it does not appear on the left-hand side and may thus be optimized. But the final term in \eqref{infrange-est} is increasing in $\ell$, which precludes sending $\ell\to\infty$ too quickly relative to $n$. 
The tradeoff is easiest to handle in the finite-range case. If there exists $r \in \N$ such that $s_\ell=0$ for all $\ell > r$, then we may take $\ell=r$ in \eqref{infrange-est} to get $\tailfn_0(\ell)=0$. This yields $H(P^{(k)} \,|\, \mu^{\otimes k}) = O(k^3/n^2)$. The following corollary shows that we can get arbitrarily close to the same exponent even for infinite-range interactions, as long as $\tailfn_0(\ell)$ decays exponentially with $\ell$.

\begin{corollary} \label{co:infrange-subexp}
Grant the assumptions of Theorem \ref{th:intro-infrange}, and assume $\sum_{\ell=1}^\infty s_\ell e^{c\ell} < \infty$ for all $c > 0$. 
If $k=o(n)$, then $H(P^{(k)} \,|\, \mu^{\otimes k}) = O(k^{3-\epsilon}/n^{2-\epsilon} )$ for each $\epsilon > 0$.
\end{corollary}
\begin{proof}
Use a Chernoff bound to deduce
\begin{align*}
\tailfn_0(x) &\le e^{-cx}\sum_{\ell=1}^\infty s_\ell e^{c\ell}, \quad \forall c,x > 0.
\end{align*}
That is, for any $c > 0$, there exists $C_c$ such that $\tailfn_0(x) \le C_ce^{-cx}$ for all $x > 0$. Note also
that the finiteness of $\sum s_\ell e^{c\ell}$ for $c > 0$ implies $\bar{s}_p < \infty$ for all $p \ge 0$.

Let $0 < \epsilon < 1/2$, and take $\ell=\lfloor \frac{\epsilon}{12\bar{s}_0^2T}\log\frac{n}{k}\rfloor$. Apply Theorem \ref{th:intro-infrange}. Note that
\begin{align*}
n\exp\bigg(-  \frac{n}{\ell} \bigg(e^{-4 \bar{s}_0^2 \ell T} - \frac{k}{2n } \bigg)_+^2 \bigg) &\le n\exp\bigg(-  \frac{n}{\ell} \bigg(\left(\frac{k}{n}\right)^{\epsilon/3} - \frac{k}{2n } \bigg)_+^2 \bigg) \\
	&= n\exp\bigg(-  \frac{k^{2\epsilon}n^{1-2\epsilon/3}}{\ell} \bigg(1 - \frac12\bigg(\frac{k}{n}\bigg)^{1-\epsilon/3} \bigg)_+^2 \bigg)
\end{align*}
is clearly $O(n^{-p})$ for any $p > 0$. Bounding $\tailfn_0^2(\ell)$ by a constant times $e^{-4\bar{s}_0^2T(1+r) \ell}$, for $r > 1$ to be determined, the other terms in \eqref{infrange-est} are bounded by
\begin{align*}
e^{12\bar{s}_0^2\ell T}\frac{k^3}{n^2} &\le C_1 \frac{k^{3-\epsilon}}{ n^{2-\epsilon}},  \\
e^{4\bar{s}_0^2 \ell T} \tailfn_0^2(\ell) k &\le C_2 k e^{-4\bar{s}_0^2T r \ell} \le C_3 k(k/n)^{r \epsilon /3 }. 
\end{align*}
for constants $C_1,C_2,C_3>0$ not depending on $(k,n,\epsilon)$.
Choose $r=(6/\epsilon)-3$ to get $2- \epsilon=r\epsilon/3$, which makes each term $O(k(k/n)^{2-\epsilon})$.
\end{proof}

\begin{remark}
The condition $\sum_{\ell=1}^\infty s_\ell e^{c\ell} < \infty$ holds in the setting of Example \ref{ex:flatderivative} if $\|\delta_m^\ell b(t,x,m,\cdot)\|_\infty \le a_1e^{a_2\ell}$ for all $\ell$, for some $a_1,a_2 \ge 0$. Indeed, we may take $b_\ell(t,x,\cdot) := 2^{-\ell}\|\delta_m^\ell b\|_\infty^{-1}\delta_m^\ell b(t,x,\mu,\cdot)$ and $s_\ell := 2^\ell \|\delta_m^\ell b(t,x,m,\cdot)\|_\infty/\ell!$, noting that $s_\ell \le Ce^{c\ell -\ell\log\ell}$ by Stirling's formula. A similar remark applies to Example \ref{ex:analyticfunc}.
\end{remark}

\section{The Gaussian case} \label{se:gaussian}

This section gives the details of Example \ref{ex:gaussian-dyn}, to illustrate the optimality of the $O((k/n)^2)$ rate of propagation of chaos. Throughout the section, $J_n$ denotes the $n \times n$ matrix of all ones, and $I_n$ is the identity matrix.
The covariance matrix of an exchangeable Gaussian measure on $\R^n$ necessarily belongs to the span of $\{I_n,J_n\}$.
For this reason, we begin with a lemma summarizing the basic algebraic properties of these matrices. We omit its proof, which follows easily from the observation that $J_n^2=nJ_n$.

\begin{lemma} \label{le:gaussian-algebra}
Let $x,y,z,w \in \R$, and let $A=xI_n+yJ_n$ and $B=zI_n+wJ_n$. Then
\begin{align*}
AB &= BA, \qquad \text{and} \qquad  e^A = e^x\left(I_n + \frac{e^{yn}-1}{n}J_n\right).
\end{align*}
The matrix $A$ has eigenvalues $x$ and $x+yn$, with multiplicities $n-1$ and $1$, respectively.
\end{lemma}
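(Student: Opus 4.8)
The plan is to verify the three claims of Lemma \ref{le:gaussian-algebra} by direct computation, exploiting the single relation $J_n^2 = nJ_n$, which one sees immediately since $(J_n)_{ij} = 1$ forces $(J_n^2)_{ij} = \sum_{m=1}^n 1 \cdot 1 = n = n(J_n)_{ij}$.

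First, for commutativity, I would expand
\begin{align*}
AB &= (xI_n + yJ_n)(zI_n + wJ_n) = xzI_n + (xw + yz)J_n + yw J_n^2 \\
   &= xzI_n + (xw + yz + nyw)J_n,
\end{align*}
and observe that this expression is symmetric under swapping $(x,y) \leftrightarrow (z,w)$, hence equals $BA$. In particular the span of $\{I_n, J_n\}$ is a commutative subalgebra, closed under products and therefore under polynomials and (by a limiting argument) under the matrix exponential.

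Next, for the exponential, I would compute $A^m$ by induction. Writing $A^m = a_m I_n + c_m J_n$, the recursion from multiplying by $A$ gives $a_{m+1} = x a_m$ and $c_{m+1} = y a_m + (x + ny) c_m$, so $a_m = x^m$ and $c_m$ satisfies a linear recursion whose solution is $c_m = \frac{(x+ny)^m - x^m}{n}$ (easily checked: it is correct for $m=0,1$ and satisfies the recursion since $y x^m + (x+ny)\frac{(x+ny)^m - x^m}{n} = \frac{(x+ny)^{m+1} - x^{m+1}}{n}$). Summing the exponential series term by term — legitimate since everything lives in the finite-dimensional algebra $\mathrm{span}\{I_n, J_n\}$ — yields
\begin{align*}
e^A = \sum_{m=0}^\infty \frac{1}{m!}\Big(x^m I_n + \tfrac{(x+ny)^m - x^m}{n}J_n\Big) = e^x I_n + \frac{e^{x+ny} - e^x}{n}J_n = e^x\Big(I_n + \frac{e^{yn}-1}{n}J_n\Big).
\end{align*}
Alternatively, and perhaps more cleanly, one can use the eigenstructure established below together with the fact that $P := \frac1n J_n$ is the orthogonal projection onto $\mathrm{span}(\mathbf{1})$: since $A = x(I_n - P) + (x+ny)P$ with $I_n - P$ and $P$ complementary orthogonal projections, $e^A = e^x(I_n - P) + e^{x+ny}P$, which rearranges to the claimed formula.

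Finally, for the eigenvalues: the all-ones vector $\mathbf{1}$ satisfies $J_n \mathbf{1} = n\mathbf{1}$, hence $A\mathbf{1} = (x + ny)\mathbf{1}$, giving the eigenvalue $x+ny$ with (at least) multiplicity one; and any vector $v \perp \mathbf{1}$ satisfies $J_n v = 0$, hence $Av = xv$, and the orthogonal complement of $\mathbf{1}$ has dimension $n-1$, giving eigenvalue $x$ with multiplicity $n-1$. Since these account for all $n$ dimensions, the multiplicities are exactly $1$ and $n-1$ (assuming $ny \neq 0$; the degenerate case $y=0$ is trivial as $A = xI_n$). There is no real obstacle here — the only point requiring a modicum of care is justifying the termwise summation of the exponential series, which is immediate from finite-dimensionality of the subalgebra, so I would simply remark on it rather than belabor it. Accordingly the authors elect to omit the proof, as stated.
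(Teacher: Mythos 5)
Your proposal is correct, and it is exactly the routine verification the paper deliberately omits, resting on the single identity $J_n^2=nJ_n$ (expansion for commutativity, power series or the projection $P=\tfrac1n J_n$ for the exponential, and the eigenvectors $\mathbf{1}$ and $\mathbf{1}^\perp$ for the spectrum). Nothing more is needed.
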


A well known calculation shows that the Wasserstein distance between two centered Gaussian measures $\gamma_i$ with covariance matrices $\Sigma_i$, for $i=1,2$, is given by
\begin{align*}
\W_2^2(\gamma_1,\gamma_2) &= \tr\big(\Sigma_1+\Sigma_2 - 2(\Sigma_1\Sigma_2)^{1/2}\big).
\end{align*}
See \cite{dowson1982frechet}.
If $\Sigma_1$ and $\Sigma_2$ commute, then this reduces to
\begin{align}
\W_2^2(\gamma_1,\gamma_2) &= \tr\big((\Sigma_1^{1/2} - \Sigma_2^{1/2})^2\big). \label{eq:W2-gaussian}
\end{align}
Now, for $k\in \N$ and $i=1,2$, suppose $\Sigma_i = a_iI_k + b_iJ_k$ for some $a_i > 0$ and $b_i \in \R$. Noting that $\Sigma_i$ has eigenvalues $a_i$ and $a_i+b_ik$, with respective multiplicities $k-1$ and $1$, we may apply \eqref{eq:W2-gaussian} along with a simultaneous diagonalization of $\Sigma_1$ and $\Sigma_2$ to find
\begin{align}
\W_2^2(\gamma_1,\gamma_2) &= (k-1)\big(a_1^{1/2} - a_2^{1/2}\big)^2 + \big((a_1+b_1k)^{1/2} - (a_2+b_2k)^{1/2}\big)^2. \label{eq:W2-gaussian-ourcase}
\end{align}

Let us turn to the setting of Example \ref{ex:gaussian-dyn}. The $n$-particle SDE system takes the form
\begin{align*}
dX^i_t = -\bigg( aX^i_t + \frac{b}{n-1}\sum_{j \neq i} X^j_t\bigg)dt + dW^i_t, \ \ X^i_0=0, \quad i=1,\ldots,n.
\end{align*}
In vector form, letting $\bm{X}_t=(X^1_t,\ldots,X^n_t)$ and $\bm{W}_t=(W^1_t,\ldots,W^n_t)$, we may write this as 
\begin{align*}
d\bm{X}_t = - A\bm{X}_tdt +  d\bm{W}_t, \quad \text{where} \quad A_n := aI_n + \frac{b}{n-1} J_n.
\end{align*}
The solution of this multivariate Ornstein-Uhlenbeck equation is expressed as
\begin{align*}
\bm X_t = \int_0^t e^{-A_n(t-s)}\,d\bm{W}_s.
\end{align*}
Hence, the law $P^{(n,n)}_t$ of $\bm{X}_t$ is a centered Gaussian with covariance matrix 
\begin{align*}
\Sigma^n_t := \int_0^t e^{-2sA_n}\,ds.
\end{align*}
Using Lemma \ref{le:gaussian-algebra} we can write this as 
\begin{align*}
\Sigma^n_t &= \int_0^t e^{-2as}\bigg(I_n + \frac{e^{-\frac{2sbn}{n-1}}-1}{n}J_n\bigg) ds \\
	&= \frac{1}{2a}(1-e^{-2at})I_n + \frac{1}{n}\left(\int_0^t \left( e^{-2s\left(a + \frac{bn}{n-1}\right)} - e^{-2as}\right)ds\right)J_n \\
	&=  \frac{1}{2a}(1-e^{-2at}) I_n + \frac{1}{2n}\bigg(\frac{1 - e^{-2t\left(a + \frac{bn}{n-1}\right)}}{a + \frac{bn}{n-1}} - \frac{1 - e^{-2at}}{a} \bigg)J_n  \\
	&= \frac{1}{2a} (1-e^{-2at})[I_n - c_n(t)J_n],
\end{align*}
where we define
\begin{align*}
v(t) := \frac{1}{2a} (1-e^{-2at}), \qquad c_n(t) := \frac{1}{n}\left(1 - \frac{a}{a + \frac{bn}{n-1}}\frac{1 - e^{-2t\left(a + \frac{bn}{n-1}\right)}}{1 - e^{-2at}} \right).
\end{align*}
Note that $0 \le c_n(t) \le  1/n$.
The $k$-particle marginal $P^{(n,k)}_t$ is a centered Gaussian with covariance matrix $\Sigma^{(n,k)}_t = v(t)[I_k - c_n(t)J_k]$. 
Let $\mu_t$ denote the centered one-dimensional Gaussian measure with variance $v(t)$. Clearly, $P^{(n,k)}_t \to \mu^{\otimes k}_t$ weakly as $n\to\infty$, for each $k$.

Applying the formula \eqref{eq:W2-gaussian-ourcase}, with $a_1=a_2=v(t)$, $b_1=-v(t)c_n(t)$, and $b_2=0$, we find
\begin{align*}
\W_2^2(P^{(n,k)}_t,\mu^{\otimes k}_t) &= v(t)\big( (1 - kc_n(t))^{1/2} - 1\big)^2.
\end{align*}
Note that $((1+x)^{1/2}-1)^2/x^2 \to 1/4$ as $x \to 0$, and that
\begin{align}
\lim_{n\to\infty} nc_n(t) &= 1 - \frac{a}{a+b} \frac{1 - e^{-2t(a + b)}}{1 - e^{-2at}}. \label{limit-cn}
\end{align}
Hence, if $n\to\infty$ and $k/n\to 0$, then $kc_n(t) \to 0$, and so
\begin{align*}
 (n/k)^2 \W_2^2(P^{(n,k)}_t,\mu^{\otimes k}_t) &= \left( n c_n(t) \right)^2\frac{\W_2^2(P^{(n,k)}_t,\mu^{\otimes k}_t )}{ (kc_n(t))^2} \\
	&\to \left(1 - \frac{a}{a+b} \frac{1 - e^{-2t(a + b)}}{1 - e^{-2at}}\right)^2 \frac{1}{8a}(1-e^{-2at})  .
\end{align*}
In fact, this limit is uniform over $t \ge 0$, since \eqref{limit-cn} is.
Finally, the claim \eqref{ex:Gaussian-claim} follows from the simple inequality $\W_2^2(P^{(n,k)}_t,\mu^{\otimes k}_t) \le \W_2^2(P^{(n,k)},\mu^{\otimes k})$ for $t \in [0,T]$.

\section{Proofs for pairwise interactions} \label{se:proofs-dyn}

Working in the setting of Section \ref{se:intro-dyn}, this section is devoted to the proofs of Theorems \ref{th:intro-dynamic} and \ref{th:intro-dynamic-rev}. We begin with three short sections summarizing a projection lemma needed to derive the BBGKY hierarchy, some additional notation,  and the needed properties of relative entropy.

\subsection{A projection lemma} 

The following lemma will later let us identify the dynamics of $k$ particles out of the $n$-particle system. It is well known; cf.\ \cite[Corollary 3.11]{brunick2013mimicking}.

\begin{lemma} \label{le:proj-pathdep}
Suppose a filtered probability space $(\Omega,\F,\FF,\PP)$ supports an $\FF$-Brownian motion $W$ of dimension $k$, a continuous $k$-dimensional process $Y$, and an $\FF$-progressively measurable process $(b_t)_{t\ge 0}$ satisfying $\E\int_0^T|b_t|\,dt < \infty$ and also
\begin{align*}
Y_t = Y_0 + \int_0^t b_s\,ds + W_t, \qquad t \in [0,T].
\end{align*}
Let $\FF^Y=(\F^Y_t)_{t \in [0,T]}$ denote the filtration generated by $(Y_t)_{t \in [0,T]}$.
Let $\widehat{b} : [0,T] \times \C_T^k \to \R^k$ be a progressively measurable function  such that
\begin{align*}
\widehat{b}(t,Y) := \E\big[b_t\,|\,\F^Y_t\big], \qquad a.s., \ \ a.e. \ t  \in [0,T],
\end{align*}
Then there exists an $\FF^Y$-Brownian motion $\widehat{W}$ of dimension $k$ such that
\begin{align*}
Y_t = Y_0 + \int_0^t \widehat{b}(s,Y)\,ds + \widehat{W}_t, \qquad a.s., \ \ a.e. \ t  \in [0,T],
\end{align*}
\end{lemma}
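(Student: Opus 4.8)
The plan is to verify that $\widehat{W}_t := Y_t - Y_0 - \int_0^t \widehat{b}(s,Y)\,ds$ is an $\FF^Y$-Brownian motion; everything else is then immediate from the definition. First I would observe that $\widehat W$ is by construction $\FF^Y$-adapted (since $\widehat b$ is progressively measurable, $\int_0^t \widehat b(s,Y)\,ds$ is $\F^Y_t$-measurable) and has continuous paths. Writing $\widehat W_t = W_t + \int_0^t (b_s - \widehat b(s,Y))\,ds$, it suffices by L\'evy's characterization to show that $\widehat W$ is an $\FF^Y$-martingale with quadratic covariation $\langle \widehat W^i, \widehat W^j\rangle_t = \delta_{ij} t$. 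The quadratic covariation is clear since $\widehat W$ differs from $W$ only by a finite-variation (absolutely continuous) term, so $\langle \widehat W^i, \widehat W^j\rangle_t = \langle W^i, W^j\rangle_t = \delta_{ij}t$.

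The real content is the $\FF^Y$-martingale property. For $0 \le s \le t \le T$ I would compute, for any bounded $\F^Y_s$-measurable $Z$,
\begin{align*}
\E\big[(\widehat W_t - \widehat W_s)Z\big] &= \E\bigg[\Big(W_t - W_s + \int_s^t (b_u - \widehat b(u,Y))\,du\Big)Z\bigg].
\end{align*}
Since $Z$ is $\F^Y_s \subset \F_s$-measurable and $W$ is an $\FF$-martingale, actually one cannot directly say $\E[(W_t-W_s)Z]=0$ because $Z$ need not be independent of the Brownian increment in a useful way — instead I would proceed via the drift term. Using Fubini and the tower property over $\F^Y_u$ for $u \in [s,t]$, together with the defining relation $\E[b_u \mid \F^Y_u] = \widehat b(u,Y)$ and the fact that $Z$ is $\F^Y_s$- hence $\F^Y_u$-measurable,
\begin{align*}
\E\bigg[\int_s^t (b_u - \widehat b(u,Y))\,du\, Z\bigg] = \int_s^t \E\big[\E[b_u - \widehat b(u,Y)\mid \F^Y_u]\,Z\big]\,du = 0.
\end{align*}
It remains to handle $\E[(W_t - W_s)Z]$. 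Here I would instead run the tower-property argument on the whole increment at once: writing $\widehat W_t - \widehat W_s = Y_t - Y_s - \int_s^t \widehat b(u,Y)\,du$, condition on $\F^Y_s$ and use that $Y$ solves the given SDE to get $\E[Y_t - Y_s \mid \F^Y_s] = \E[\int_s^t b_u\,du \mid \F^Y_s]$ (the $\FF$-Brownian term has zero $\F_s$-conditional expectation and $\F^Y_s \subseteq \F_s$, so one can first condition on $\F_s$); then $\E[\int_s^t b_u\,du \mid \F^Y_s] = \int_s^t \E[\widehat b(u,Y)\mid \F^Y_s]\,du = \E[\int_s^t \widehat b(u,Y)\,du\mid \F^Y_s]$ by Fubini and the tower property over $\F^Y_u \supseteq \F^Y_s$. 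Hence $\E[\widehat W_t - \widehat W_s \mid \F^Y_s] = 0$, giving the $\FF^Y$-martingale property.

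The main obstacle is the measure-theoretic bookkeeping around the two filtrations: one must be careful that $\widehat b(u,Y)$ is genuinely a version of $\E[b_u \mid \F^Y_u]$ for a.e.\ $u$ simultaneously (a joint measurability/progressive-measurability point, which is exactly why a progressively measurable representative $\widehat b$ is assumed to exist rather than constructed), and that the conditional Fubini steps are justified by the integrability hypothesis $\E\int_0^T|b_t|\,dt < \infty$ (which also gives $\E\int_0^T|\widehat b(t,Y)|\,dt < \infty$ by Jensen). Since the statement asserts this is well known with a reference to \cite[Corollary 3.11]{brunick2013mimicking}, I would keep this argument brief and cite that source for the delicate measurability details.
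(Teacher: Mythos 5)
Your proposal is correct and follows essentially the same approach as the paper: define $\widehat W$ exactly as you do and invoke L\'evy's criterion, with the paper simply citing the classical filtering argument rather than spelling out the martingale computation. One small point of self-created confusion: your remark that ``one cannot directly say $\E[(W_t-W_s)Z]=0$'' is unwarranted---since $Z$ is $\F^Y_s$-measurable, it is $\F_s$-measurable, and $\E[(W_t-W_s)Z]=\E[Z\,\E[W_t-W_s\mid\F_s]]=0$ immediately because $W$ is an $\FF$-martingale; indeed your own ``whole increment'' argument two lines later relies on precisely this fact, so the hesitation is a red herring that does not affect correctness.
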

\begin{proof}
This is a classical argument in filtering theory, cf.\ \cite[Theorem VI.8.4]{rogers2000diffusions}:
The process
\begin{align*}
\widehat{W}_t := Y_t - Y_0 - \int_0^t \widehat{b}(s,Y)\,ds = W_t + \int_0^t \big(b_s - \E\big[b_s\,|\,\F^Y_s\big]\big)\,ds
\end{align*}
is easily seen to be an $\FF^Y$-Brownian motion using L\'evy's criterion.
\end{proof}

\begin{remark} \label{re:optionalprojection}
In the setting of Lemma \ref{le:proj-pathdep}, the existence of a such a progressively  measurable function $\widehat{b}$, i.e., a progressively measurable version of the conditional expectation, follows from standard measure-theoretic arguments such as  \cite[Proposition 5.1]{brunick2013mimicking}.
\end{remark}

\subsection{Additional notation for measures on path space}

For any $k \in \N$, $Q \in \P(\C_T^k)$, and $t \in [0,T]$,  denote by  $Q[t]$ the projection to $\C_t^k$, i.e., the pushforward of $Q$ by the restriction map $x \mapsto x|_{[0,t]}$. In particular, $Q[T]=Q$. Recalling that $Q_t \in \P(\R^k)$ denotes the time-$t$ marginal, and that $\C_0^k$ is identified with $\R^k$, we have $Q[0]=Q_0$.

We will systematically make the following minor abuse of notation. Let $\overline{b} : [0,T] \times \C_T^d \to \R^d$ be progressively measurable, as defined in Section \ref{se:pathspacenotation}). Then, for each $t \in [0,T]$, we may unambiguously view $\overline{b}(t,\cdot)$ as a Borel measurable function on $\C_t^d$, rather than $\C_T^d$, via the identification $\overline{b}(t,x|_{[0,t]}):=\overline{b}(t,x)$ for $x \in \C_T^d$.
In particular, for $Q \in \P(\C_T^d)$, we have $\langle Q[t],\overline{b}(t,\cdot)\rangle = \langle Q,\overline{b}(t,\cdot)\rangle$.

\subsection{Relative entropy} \label{se:entropyestimates}

This section summarizes the properties and calculations involving relative entropy that we will need. We will use repeatedly the \emph{chain rule},
\begin{align}
H\big(m^1(dx)K_x^1(dy) \,|\, m^2(dx)K_x^2(dy)\big) &= H(m^1 \,|\, m^2 ) + \int_{E_1} H(K_x^1\,|\,K_x^2) \, m^1(dx), \label{def:chainrule}
\end{align}
for (disintegrated) probability measures $(m^i(dx)K_x^i(dy))_{i=1,2}$ on a product space $E_1 \times E_2$. See \cite[Theorem 2.6]{budhiraja-dupuis} for a proof. Combined with nonnegativity of relative entropy, it follows that
\begin{align}
H\big(m^1(dx)K_x^1(dy) \,|\, m^2(dx)K_x^2(dy)\big) &\ge \int_{E_1} H(K_x^1\,|\,K_x^2) \, m^1(dx). \label{def:chainrule2}
\end{align}
We will also make use use of the so-called \emph{data processing inequality}, which states that $H(\nu \circ f^{-1}\,|\,\nu' \circ f^{-1}) \le H(\nu\,|\,\nu')$ for any probability measures $\nu,\nu'$ on a common measurable space and any measurable function $f$ into another measurable space; this follows easily from Jensen's inequality for conditional expectation.
As a consequence, we have the following simple but useful inequalities:
\begin{align}
H(P^{(k)}[s]\,|\, \mu^{\otimes k}[s]) &\le H(P^{(k)}[t]\,|\, \mu^{\otimes k}[t]), \qquad 0 \le s \le t \le T, \ 1 \le k \le n \label{ineq:dataproc1} \\
H(P^{(k)}[t]\,|\, \mu^{\otimes k}[t]) &\le H(P^{(k+1)}[t]\,|\, \mu^{\otimes (k+1)}[t]), \qquad t \in [0,T], \ 1 \le k < n. \label{ineq:dataproc2}
\end{align}
Note that \eqref{ineq:dataproc1} is true for measures on path space but fails in general for the time-marginals; that is, \eqref{ineq:dataproc1} is not true if $P^{(k)}[\cdot]$ and $\mu^{\otimes k}[\cdot]$ are replaced on both sides by $P^{(k)}_\cdot$ and $\mu^{\otimes k}_\cdot$, respectively.
Lastly, we recall the classical Pinsker's inequality: For probability measures $\nu$ and $\nu'$ on a common measurable space $\Omega$, and for a bounded measurable function $f : \Omega \to \R$, 
\begin{align*}
\langle \nu-\nu',f\rangle^2 \le 2\|f\|_\infty^2 H(\nu\,|\,\nu').
\end{align*}
This extends immediately to vector-valued functions: If $f : \Omega \to \R^d$ is measurable and bounded, then
\begin{align}
\begin{split}
|\langle \nu - \nu',f\rangle|^2 &= \sup_{u \in \R^d,\,|u|=1}\langle \nu - \nu',f \cdot u\rangle^2 \le \sup_{u \in \R^d,\,|u|=1} 2\|f \cdot u\|_\infty^2 H(\nu\,|\,\nu') \\
	&\le 2\||f|^2\|_\infty H(\nu\,|\,\nu').
\end{split} \label{ineq:Pinsker}
\end{align}

The following simple rewriting of the key assumption \eqref{asmp:dyn-transp} will be useful:

\begin{lemma} \label{le:keyasmp-rewrite}
Let $b : [0,T] \times \C_T^d \times \C_T^d \to \R^d$ be progressively measurable.
Let $\mu \in \P(\C_T^d)$ and $0 < \transpconst < \infty$. Then \eqref{asmp:dyn-transp} holds if and only if
\begin{align}
|\langle \mu[t] - \nu, b(t,x,\cdot)\rangle|^2 \le  \transpconst H(\nu\,|\,\mu[t]), \ \ \forall t \in (0,T), \ x \in \C_T^d, \ \nu \in \P(\C_t^d). \label{asmp:dyn-transp-equiv}
\end{align}
\end{lemma}
\begin{proof}
Suppose \eqref{asmp:dyn-transp-equiv} holds. Let $\nu \in \P(\C_T^d)$. Then, for $(t,x) \in (0,T) \times \C_T^d$, we use the progressive measurability of $b$ followed by \eqref{asmp:dyn-transp-equiv} and the monotonicity of entropy \eqref{ineq:dataproc1} to get
\begin{align*}
|\langle \mu  - \nu , b(t,x,\cdot)\rangle|^2 = |\langle \mu[t]  - \nu[t] , b(t,x,\cdot)\rangle|^2 &\le \transpconst H(\nu[t]\,|\,\mu[t]) \le \transpconst H(\nu\,|\,\mu),
\end{align*}
which proves \eqref{asmp:dyn-transp}. Conversely, suppose \eqref{asmp:dyn-transp} holds. Let $t \in (0,T)$. Define the kernel $\C_t^d \ni x \mapsto K_x \in \P(\C_T^d)$ via the disintegration $\mu(dx)=\mu[t](dx|_{[0,t]})K_{x|_{[0,t]}}(dx)$. That is, $K_{x|_{[0,t]}}$ is the conditional law of $x$ given $x|_{[0,t]}$ under $\mu$. Let $\nu \in \P(\C_t^d)$, and define $\overline\nu \in \P(\C_T^d)$ by $\overline\nu(dx)=\nu(dx|_{[0,t]})K_{x|_{[0,t]}}(dx)$. Then $\overline\nu[t]=\nu$, and so progressive measurability of $b$ and \eqref{asmp:dyn-transp} imply
\begin{align*}
|\langle \mu[t] - \nu, b(t,x,\cdot)\rangle|^2 &= |\langle \mu - \overline\nu, b(t,x,\cdot)\rangle|^2 \le \transpconst H(\overline\nu\,|\,\mu),
\end{align*}
for each $x \in \C_T^d$.
By the chain rule of relative entropy \eqref{def:chainrule}, we have
\begin{align*}
H(\overline\nu\,|\,\mu) &= H(\overline\nu[t]\,|\,\mu[t]) + \int_{\C_t^d}H(K_x\,|\,K_x)\,\mu[t](dx) = H(\nu\,|\,\mu[t]).
\end{align*}
This proves \eqref{asmp:dyn-transp-equiv}.
\end{proof}

As a final preparation, we state a version of a classical entropy estimate for diffusions which follows from Girsanov's theorem, though care is needed when working with unbounded coefficients. We give the proof in Appendix \ref{ap:entropyestimates}, borrowing (ii) from \cite{leonard2012girsanov}.

\begin{lemma} \label{le:entropy-diffusions}
Let $k \in \N$, and let $b^1,b^2 : [0,T] \times \C^k_T \to \R^k$ be progressively measurable. 
Suppose for each $i=1,2$ that the SDE
\begin{align*}
dZ^i_t = b^i(t,Z^i)dt +  dW^i_t, \qquad t \in [0,T],
\end{align*}
admits a weak solution, and let $P^i \in \P(\C^k_T)$ denote its law. Assume the SDE$(b^2)$ is well-posed in the sense of Definition \ref{def:SDEwellposed}.
\begin{enumerate}[(i)]
\item If
\begin{align}
\PP\bigg(\int_0^T|b^1(t,Z^i)-b^2(t,Z^i)|^2dt < \infty\bigg)=1, \quad i=1,2. \label{asmp:finiteentropy1}
\end{align}
then for $0 \le t \le T$ we have
\begin{align*}
H(P^1[t]\,|\,P^2[t]) &\le H(P^1_0\,|\,P^2_0) + \frac12 \E\int_0^t|b^1(s,Z^1)-b^2(s,Z^1)|^2\,ds.
\end{align*}
\item If $H(P^1\,|\,P^2) < \infty$,
then for $0 \le t \le T$ we have
\begin{align}
H(P^1[t]\,|\,P^2[t]) &= H(P^1_0\,|\,P^2_0) + \frac12 \E\int_0^t|b^1(s,Z^1)-b^2(s,Z^1)|^2\,ds. \label{eqn:entropyidentity1}
\end{align}
\item If
\begin{align}
\E\int_0^T|b^1(t,Z^i)-b^2(t,Z^i)|^2dt < \infty, \quad i=1,2, \label{asmp:finiteentropy2}
\end{align}
then \eqref{eqn:entropyidentity1} holds for $0 \le t \le T$.
\end{enumerate}
\end{lemma}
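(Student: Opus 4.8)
The statement is a version of the classical relative-entropy estimate for It\^o diffusions coming from Girsanov's theorem, the delicacy being that the drift difference $\beta(t,\cdot):=b^1(t,\cdot)-b^2(t,\cdot)$ is only square-integrable in time along almost every trajectory rather than bounded. The plan is to prove (i) by a localized change of measure followed by a semicontinuity passage to the limit, to obtain (ii) from the finite-entropy Girsanov theory of L\'eonard \cite{leonard2012girsanov}, and then to deduce (iii) from (i) and (ii). In all three parts I would first reduce to a deterministic initial condition: by the chain rule \eqref{def:chainrule} applied to the disintegration over the time-$0$ coordinate, $H(P^1[t]\,|\,P^2[t]) = H(P^1_0\,|\,P^2_0) + \int H(P^1[t]^z\,|\,P^2[t]^z)\,P^1_0(dz)$, where $P^2[t]^z$ is (by well-posedness of SDE$(b^2)$) the unique law of the $b^2$-diffusion started from $z$ and $P^1[t]^z$ is a weak solution of the $b^1$-SDE from $z$; so it suffices to prove the estimates assuming $P^1_0=P^2_0=\delta_z$.

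\textbf{Part (i).} Work on a space carrying the driving Brownian motion $W^1$ of $Z^1$, and set $\tau_m:=\inf\{s\le t:\int_0^s|\beta(r,Z^1)|^2\,dr>m\}$, which increases to $t$ a.s.\ under $P^1$ by \eqref{asmp:finiteentropy1}. Since $\int_0^{\tau_m}|\beta(r,Z^1)|^2\,dr\le m$, Novikov's criterion makes $\mathcal N^m_s:=\exp(-\int_0^{s\wedge\tau_m}\beta(r,Z^1)\cdot dW^1_r-\tfrac12\int_0^{s\wedge\tau_m}|\beta(r,Z^1)|^2\,dr)$ a genuine martingale, and under $Q^m$ defined by $dQ^m/dP^1=\mathcal N^m_t$ the process $Z^1$ solves the $b^2$-SDE up to time $\tau_m$. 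By well-posedness of SDE$(b^2)$ the stopped law $\mathrm{Law}^{Q^m}(Z^1_{\cdot\wedge\tau_m})$ equals $\mathrm{Law}^{P^2}(Z_{\cdot\wedge\tau_m})=:\nu^m_2$, while we set $\nu^m_1:=\mathrm{Law}^{P^1}(Z^1_{\cdot\wedge\tau_m})$; since $\nu^m_1,\nu^m_2$ are the images of $P^1,Q^m$ under the same stopping map, the data-processing inequality gives $H(\nu^m_1\,|\,\nu^m_2)\le H(P^1\,|\,Q^m)=\E^{P^1}[-\log\mathcal N^m_t]=\tfrac12\E^{P^1}\int_0^{\tau_m}|\beta(r,Z^1)|^2\,dr$, the stochastic-integral term dropping out because $\int_0^{\cdot\wedge\tau_m}\beta\cdot dW^1$ has bounded quadratic variation. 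Letting $m\to\infty$, the right side tends to $\tfrac12\E^{P^1}\int_0^t|\beta|^2$ by monotone convergence, while $\nu^m_1\to P^1[t]$ and $\nu^m_2\to P^2[t]$ weakly (the stopped paths converge a.s.\ uniformly, using \eqref{asmp:finiteentropy1} with $i=2$ on the $P^2$ side), so lower semicontinuity of relative entropy yields $H(P^1[t]^z\,|\,P^2[t]^z)\le\tfrac12\E\int_0^t|\beta|^2$; undoing the conditioning restores the $H(P^1_0\,|\,P^2_0)$ term.

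\textbf{Parts (ii) and (iii).} For (ii), with $H(P^1\,|\,P^2)<\infty$ I would invoke L\'eonard's finite-entropy Girsanov theorem \cite{leonard2012girsanov} with $P^2$ as reference measure (legitimate since SDE$(b^2)$ is well-posed): under $P^1$ the canonical process has a semimartingale decomposition $Z_s=Z_0+\int_0^s(b^2(r,Z)+\gamma_r)\,dr+M_s$ with $M$ a $P^1$-Brownian motion and $H(P^1\,|\,P^2)=H(P^1_0\,|\,P^2_0)+\tfrac12\E^{P^1}\int_0^T|\gamma_r|^2\,dr$. Comparing with the decomposition that holds because $P^1$ is a weak solution of the $b^1$-SDE, uniqueness of the finite-variation part of a continuous $\FF^Z$-semimartingale (the same principle as in Lemma \ref{le:proj-pathdep}) forces $\gamma_r=\beta(r,Z)$ for $dr\otimes dP^1$-a.e.\ pair, which is \eqref{eqn:entropyidentity1} for $t=T$; the intermediate-time case follows by applying the same reasoning on $[0,t]$, since $H(P^1[t]\,|\,P^2[t])\le H(P^1\,|\,P^2)<\infty$ by data processing and $P^2[t]$ is again a well-posed diffusion's law. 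For (iii), the hypothesis \eqref{asmp:finiteentropy2} implies \eqref{asmp:finiteentropy1}, so (i) gives $H(P^1[t]\,|\,P^2[t])\le H(P^1_0\,|\,P^2_0)+\tfrac12\E^{P^1}\int_0^t|\beta|^2\,dr$; if $H(P^1_0\,|\,P^2_0)=\infty$ then both sides of \eqref{eqn:entropyidentity1} are infinite (the left side by data processing to the time-$0$ coordinate), and otherwise this bound shows $H(P^1[t]\,|\,P^2[t])<\infty$, so (ii) applied to $P^1[t]$ and $P^2[t]$ yields the identity.

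\textbf{Main difficulty.} The crux is that, without boundedness, the naive density $\exp(\int\beta\,dW-\tfrac12\int|\beta|^2)$ is merely a nonnegative local martingale, which may fail to be a true martingale (loss of mass), so one cannot simply write down $dP^1/dP^2$ and integrate its logarithm. This is exactly why (i) must be carried out through localization and a semicontinuity limit---which is what forces it to be an inequality rather than an identity---and why (ii) relies on the finite-entropy Girsanov theory rather than an elementary computation. The well-posedness hypothesis on SDE$(b^2)$ enters precisely at the point where a changed (and stopped, conditioned) measure must be identified with $P^2$.
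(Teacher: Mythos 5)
Your overall plan matches the paper's: establish (ii) by invoking L\'eonard's finite-entropy Girsanov theorem, deduce (iii) from (i) and (ii), and prove (i) by localizing with stopping times, using Novikov, identifying the changed stopped measure with the reference one, and passing to the limit via lower semicontinuity of relative entropy. Your preliminary reduction to a deterministic initial condition via the chain rule \eqref{def:chainrule} is a variation the paper does not carry out; it cleanly isolates the $H(P^1_0\,|\,P^2_0)$ contribution and is a perfectly sound step (note only that to conclude $P^1[t]^z$ solves the $b^1$-SDE from $z$ you are implicitly using that an $\FF^X$-Brownian motion is independent of $\F^X_0$, which is true but worth saying). Your use of data processing for the stopping map in place of the paper's restriction to $\F_{\tau_n}$ is an equivalent bookkeeping device: since the Radon--Nikodym density $\mathcal N^m_{\tau_m}$ is $\F_{\tau_m}$-measurable, the data-processing inequality is actually tight and the two routes compute the same quantity.

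The genuine gap is in the sentence ``By well-posedness of SDE$(b^2)$ the stopped law $\mathrm{Law}^{Q^m}(Z^1_{\cdot\wedge\tau_m})$ equals $\mathrm{Law}^{P^2}(Z_{\cdot\wedge\tau_m})$.'' Under $Q^m$ the canonical process solves SDE$(b^2)$ only on the stochastic interval $[0,\tau_m]$, after which it reverts to the $P^1$ dynamics (the density $\mathcal N^m$ is frozen after $\tau_m$). Definition \ref{def:SDEwellposed} provides uniqueness in law for solutions on deterministic intervals $[t,T]$ started from a fixed path, and this does \emph{not} immediately transfer to uniqueness of stopped laws on a random sub-interval. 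The paper flags exactly this issue and resolves it by following Stroock--Varadhan's piecing-together argument: one constructs a probability $Q_n$ that agrees with $M_{\tau_n}dP^1$ on $\F_{\tau_n}$ and switches to the $b^2$-dynamics thereafter, checks via L\'evy's criterion that $Q_n$ solves SDE$(b^2)$ on all of $[0,T]$, and only then invokes well-posedness to conclude $Q_n=P^2$ and hence equality of the stopped laws. Making this pasting measurable in the starting data requires Lemma \ref{le:Ptx-measurability}, whose proof is a nontrivial Borel-selection argument. Without some version of this step, your chain $H(\nu^m_1\,|\,\nu^m_2)\le H(P^1\,|\,Q^m)$ does not connect $P^1$ to $P^2$, and the lower-semicontinuity passage has no $P^2$-side limit to converge to. The rest of the argument, including the Novikov bound, the vanishing of the stochastic-integral term, the monotone-convergence limit of $\frac12\E\int_0^{\tau_m}|\beta|^2$, and the handling of parts (ii)--(iii), is sound.
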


\begin{remark} \label{re:vlasov2}
By a simple change of variables, this entropy estimate clearly works for any non-degenerate noise. Precisely, if the identity diffusion coefficient $dW^i_t$ were replaced by $\sigma dW^i_t$, for a non-degenerate matrix $\sigma \in \R^{k \times k}$, then the estimates would be in terms of  $|\sigma^{-1}(b^1-b^2)|^2$  rather than $|b^1-b^2|^2$. Moreover, this remains valid even for degenerate $\sigma$, with $\sigma^{-1}$ replaced by the (Moore–Penrose) pseudoinverse, as long $b^1-b^2$ takes values in the range (column space) of $\sigma$. This is why our arguments work also for the kinetic case mentioned in Remark \ref{re:vlasov1}, which leads to drifts and diffusion coefficients of the form
\begin{align*}
b^i(t,x,v) = \begin{pmatrix} v_t \\ \tilde{b}^i(t,x,v) \end{pmatrix}, \qquad 
\sigma = \begin{pmatrix} 0 & 0 \\ 0 & I \end{pmatrix},
\end{align*}
for $i=1,2$, and we see indeed that $b^1-b^2=(0, \tilde{b}^1-\tilde{b}^2)^\top$ always belongs to the range of $\sigma$.
\end{remark}

\subsection{Proof of Theorem \ref{th:intro-dynamic}}  \label{se:proofs:main}

We now apply the general principles of Lemmas \ref{le:proj-pathdep} and \ref{le:entropy-diffusions} to derive our initial entropy estimate.
Throughout the rest of this section, we abbreviate
\begin{align}
H^k_t := H(P^{(k)}[t] \,|\, \mu^{\otimes k}[t]), \label{abbrev:H^k1}
\end{align}
for $1 \le k \le n$ and $t \in [0,T]$. As in Section \ref{se:intro-dyn}, $(X^1,\ldots,X^n)$ denotes the solution of the SDE \eqref{def:introSDE-nonMarkov} with law $P^{(n)}$. Let $\F^k_t = \sigma((X^1_s,\ldots,X^k_s)_{s \le t})$ denote the natural filtration of the first $k$ coordinates.

\begin{lemma} \label{le:init-entropybound-dyn}
Grant the assumptions of Theorem \ref{th:intro-dynamic}, and let $1 \le k \le n$. Then $t \mapsto H^k_t$ is absolutely continuous, and for a.e.\ $t \in [0,T]$ we have
\begin{align}
\begin{split}
\frac{d}{dt}H^k_t \le &\frac{k}{ (n-1)^2}\E\Bigg[ \Bigg|\sum_{j =2}^k \Big(b(t,X^1,X^j) - \langle \mu,b(t,X^1,\cdot)\rangle  \Big)\Bigg|^2\Bigg]   \\
&\quad + \frac{k(n-k)^2}{ (n-1)^2}\E\left[ \left|\E\big[b(t,X^1,X^n) \,|\, \F^k_t \big] - \langle \mu,b(t,X^1,\cdot)\rangle \right|^2\right].
\end{split} \label{def:init-entropybound-dyn}
\end{align}
Moreover, with $M$ defined as in \eqref{asmp:dyn-moment}, we have
\begin{align}
H^n_T \le H^n_0 + MnT. \label{def:init-entropybound-dyn2}
\end{align}
\end{lemma}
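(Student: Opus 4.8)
The plan is to apply Lemma \ref{le:proj-pathdep} to extract the dynamics of the first $k$ coordinates of the $n$-particle system, then feed the resulting drift into the entropy estimate Lemma \ref{le:entropy-diffusions}. Concretely, for fixed $k$, the process $(X^1,\ldots,X^k)$ solves an SDE whose drift is $b_0(t,X^i) + \frac{1}{n-1}\sum_{j\neq i} b(t,X^i,X^j)$ in the $i$-th coordinate; conditioning on $\F^k_t$ (the filtration of the first $k$ coordinates) and invoking Lemma \ref{le:proj-pathdep} shows that $P^{(k)}$ is the law of a weak solution of the SDE with drift $\widehat b^k_i(t,x) = b_0(t,x^i) + \frac{1}{n-1}\sum_{j\le k,\,j\neq i} b(t,x^i,x^j) + \frac{n-k}{n-1}\langle P^{(k+1|k)}_{t,x}, b(t,x^i,\cdot)\rangle$, where the last term uses exchangeability to replace the conditional expectation of $\sum_{j>k} b(t,X^i,X^j)$ by $(n-k)$ copies of a single conditional expectation. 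Meanwhile $\mu^{\otimes k}$ is the law of the SDE with drift $\overline b^{\otimes k}(t,x)$ where $\overline b(t,x) := b_0(t,x) + \langle\mu, b(t,x,\cdot)\rangle$; assumption (1) of Theorem \ref{th:intro-dynamic} gives the required well-posedness of SDE$(\overline b^{\otimes k})$.

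Applying Lemma \ref{le:entropy-diffusions} (specifically part (i), whose finite-integral hypothesis \eqref{asmp:finiteentropy1} should follow from the square-integrability assumption (2) together with standard estimates on $\mu$) to $P^1 = P^{(k)}$ and $P^2 = \mu^{\otimes k}$ yields that $t\mapsto H^k_t$ is absolutely continuous with
\[
\frac{d}{dt}H^k_t \le \frac12\sum_{i=1}^k \E\big[\,|\widehat b^k_i(t,X) - \overline b(t,X^i)|^2\,\big].
\]
The difference inside is $\widehat b^k_i(t,X) - \overline b(t,X^i) = \frac{1}{n-1}\sum_{j\le k,\,j\neq i}\big(b(t,X^i,X^j) - \langle\mu,b(t,X^i,\cdot)\rangle\big) + \frac{n-k}{n-1}\big(\E[b(t,X^i,X^{k+1})\mid\F^k_t] - \langle\mu,b(t,X^i,\cdot)\rangle\big)$. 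Using $|a+b|^2 \le 2|a|^2 + 2|b|^2$ to split these two contributions, then exchangeability to replace the sum over $i$ by $k$ times the $i=1$ term (and to rename the conditional-expectation index from $k+1$ to $n$), absorbs the factor $\frac12\cdot 2 = 1$ and produces exactly \eqref{def:init-entropybound-dyn}. One must be slightly careful that the progressive-measurability conventions let us write $\langle\mu, b(t,X^1,\cdot)\rangle = \langle\mu[t], b(t,X^1,\cdot)\rangle$ and that the conditional law $P^{(k+1|k)}$ is realized as the progressively measurable version guaranteed by Remark \ref{re:optionalprojection}.

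For the crude global bound \eqref{def:init-entropybound-dyn2}, the cleanest route is to take $k=n$ in the entropy estimate: then the conditional-expectation term disappears entirely (the coefficient $n-k = 0$), and the first term becomes $\frac{n}{(n-1)^2}\E\big[\,|\sum_{j=2}^n (b(t,X^1,X^j) - \langle\mu,b(t,X^1,\cdot)\rangle)|^2\,\big]$. Bounding this by Cauchy-Schwarz ($|\sum_{j=2}^n c_j|^2 \le (n-1)\sum_{j=2}^n |c_j|^2$) and invoking exchangeability gives $\frac{n(n-1)}{(n-1)^2}\cdot(n-1)\,\E\big[|b(t,X^1,X^2)-\langle\mu,b(t,X^1,\cdot)\rangle|^2\big] = n\,\E\big[|b(t,X^1,X^2)-\langle\mu,b(t,X^1,\cdot)\rangle|^2\big] \le nM$ by the definition of $M$ in \eqref{asmp:dyn-moment}. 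Integrating from $0$ to $T$ gives $H^n_T \le H^n_0 + MnT$. The main obstacle is the careful justification that Lemma \ref{le:entropy-diffusions} actually applies — i.e., verifying the integrability hypothesis \eqref{asmp:finiteentropy1} for the pair $(\widehat b^k, \overline b^{\otimes k})$ rather than anything deeper — since the algebraic manipulations afterward are routine exchangeability bookkeeping.
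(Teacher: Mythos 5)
Your structure — Lemma \ref{le:proj-pathdep} to extract the $k$-particle SDE, Lemma \ref{le:entropy-diffusions} for the entropy estimate, then $\tfrac12(x+y)^2 \le x^2 + y^2$ plus exchangeability — matches the paper's. But the proposal has a genuine gap in exactly the place you flag as a possible obstacle: the invocation of Lemma \ref{le:entropy-diffusions}(i), and the attendant claim that its integrability hypothesis \eqref{asmp:finiteentropy1} ``should follow from assumption (2) together with standard estimates on $\mu$.''

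Two problems. First, part (i) only gives a one-sided inequality $H^k_t \le H^k_0 + \frac12\E\int_0^t\cdots$; it does \emph{not} by itself give absolute continuity of $t\mapsto H^k_t$, which the statement of the lemma asserts and which is needed to differentiate. Second, and more seriously, \eqref{asmp:finiteentropy1} requires a.s.\ finiteness of $\int_0^T|\widehat{b}^k(t,Z)-\overline{b}^{\otimes k}(t,Z)|^2\,dt$ under \emph{both} $P^{(k)}$ and $\mu^{\otimes k}$. Under $P^{(k)}$ this is controllable via assumption (2). But the function $\widehat{b}^k_i$ involves conditional expectations built from the $P^{(n)}$ structure; as a measurable function on path space it has no natural interpretation or bound under $\mu^{\otimes k}$, and assumption (2) gives you nothing about its behavior there. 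So the hypothesis is not checkable as you propose, and this is not a routine detail.

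The paper's route avoids the issue by reversing the order of your two claims. It first proves \eqref{def:init-entropybound-dyn2} directly at level $k=n$, where no conditional-expectation drift $\widehat{b}$ appears: the drift is the explicit $\frac{1}{n-1}\sum_{j\neq i}b$, and the stronger integrability condition \eqref{asmp:finiteentropy2} of Lemma \ref{le:entropy-diffusions}(iii) can be verified under $P^{(n)}$ via assumption (2) and under $\mu^{\otimes n}$ via the exponential integrability furnished by assumption (3) through Lemma \ref{le:integ-transp-equiv}. Having $H^n_T < \infty$, data processing \eqref{ineq:dataproc2} gives $H^k_T < \infty$ for all $k < n$, and then one can invoke Lemma \ref{le:entropy-diffusions}(ii), whose only hypothesis is $H(P^1\,|\,P^2)<\infty$ — no a.s.\ integrability of the drift under the reference measure is required. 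Part (ii) produces an identity rather than an inequality, which is what yields absolute continuity. In short: you need to prove your ``crude global bound'' \emph{first}, not derive it from \eqref{def:init-entropybound-dyn}, and you must use the identity form (ii) of the Girsanov entropy lemma, not (i).
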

\begin{proof} {\ }

\noindent\textit{Step 1.} We first prove \eqref{def:init-entropybound-dyn2}, by applying Lemma \ref{le:entropy-diffusions}(iii).
Thanks to  the well-posedness asumption (1) of Theorem \ref{th:intro-dynamic},
we must check only the integrability condition \eqref{asmp:finiteentropy2}, which we will do using the assumptions \eqref{asmp:dyn-transp} and \eqref{asmp:dyn-moment}.
Recall first that, by Lemma \ref{le:integ-transp-equiv}, the assumption \eqref{asmp:dyn-transp} implies an exponential square-integrability, which in particular implies
\begin{align}
\int_{\C_T^d \times \C_T^d}\int_0^T \left|  b(t,x^1,x^2) - \langle \mu,b(t,x^1,\cdot)\rangle \right|^2 \, dt \, \mu^{\otimes 2}(dx^1,dx^2) < \infty. \label{pf:entest-dyn1}
\end{align}
To check \eqref{asmp:finiteentropy2} in our context requires showing that
\begin{align*}
\sum_{i=1}^n \int_{(\C_T^d)^n} \int_0^T \bigg|\frac{1}{n-1}\sum_{j \neq i} b(t,x^i,x^j) - \langle \mu,b(t,x^i,\cdot)\rangle\bigg|^2 \,dt \, P^{(n)}(dx^1,\ldots,dx^n) &< \infty, \\
\sum_{i=1}^n \int_{(\C_T^d)^n} \int_0^T \bigg|\frac{1}{n-1}\sum_{j \neq i} b(t,x^i,x^j) - \langle \mu,b(t,x^i,\cdot)\rangle\bigg|^2 \,dt \, \mu^{\otimes n}(dx^1,\ldots,dx^n) &< \infty.
\end{align*}
By exchangeability of $P^{(n)}$ and $\mu^{\otimes n}$, and using convexity of $x \mapsto x^2$, it suffices to check that
\begin{align*}
n \int_{(\C_T^d)^2} \int_0^T \big|b(t,x^1,x^2) - \langle \mu,b(t,x^1,\cdot)\rangle\big|^2 \,dt \, P^{(2)}(dx^1,dx^2) &< \infty, \\
n \int_{(\C_T^d)^2} \int_0^T \big|b(t,x^1,x^2) - \langle \mu,b(t,x^1,\cdot)\rangle\big|^2 \,dt \, \mu^{\otimes 2}(dx^1,dx^2) &< \infty.
\end{align*}
These two claims follow from the assumption \eqref{asmp:dyn-moment} and \eqref{pf:entest-dyn1}, respectively.
We may thus use the entropy formula from Lemma \ref{le:entropy-diffusions}(iii) along with exchangeability to find
\begin{align*}
H^n_T &= H^n_0 + \frac12 \E \sum_{i=1}^n\int_0^T \left|\frac{1}{n-1}\sum_{j \neq i} b(t,X^i,X^j) - \langle \mu,b(t,X^i,\cdot)\rangle \right|^2dt \\
	&\le H^n_0 + \frac12 nMT.
\end{align*}

{\ }

\noindent\textit{Step 2.} Next, we prove \eqref{def:init-entropybound-dyn} for $k < n$.
We begin by applying Lemma \ref{le:proj-pathdep}.
Consider a progressively measurable function $\widehat{b}^k_i : [0,T] \times (\C_T^d)^k \to \R^d$ such that 
\begin{align}
\widehat{b}^k_i(t,X^1,\ldots,X^k) = \E\big[b(t,X^i,X^n) \,|\, \F^k_t \big], \ \ a.s., \ a.e. \ t \in [0,T]. \label{def:bhat-dyn}
\end{align}
Such a function exists by Remark \ref{re:optionalprojection}.
Exchangeability of $P^{(n)}$ and progressively measurability of $(b_0,b)$ imply
\begin{align*}
\E &\left[b_0(t,X^i) + \frac{1}{n-1}\sum_{j \neq i}b(t,X^i,X^j) \,\Big|\, \F^k_t \right] \\
	&\quad = b_0(t,X^i) + \frac{1}{n-1}\sum_{j \neq i, \, j \le k}b(t,X^i,X^j) + \frac{n-k}{n-1}\widehat{b}^k_i(t,X^1,\ldots,X^k).
\end{align*}
By Lemma \ref{le:proj-pathdep}, we see that $(X^1,\ldots,X^k)$ solves the SDE system
\begin{align}
dX^i_t = \! \bigg(b_0(t,X^i) + \frac{1}{n-1}\sum_{j \neq i, \, j \le k}b(t,X^i,X^j) + \frac{n-k}{n-1}\widehat{b}^k_i(t,X^1,\ldots,X^k) \bigg)dt + d\widehat{W}^i_t, \label{pf:def:SDE-Pk}
\end{align}
for some independent Brownian motions $\widehat{W}^1,\ldots,\widehat{W}^k$.
We showed in Step 1 that $H^n_T < \infty$, and thus $H^k_T \le H^n_T < \infty$ according to \eqref{ineq:dataproc2}.
We may then apply Lemma \ref{le:entropy-diffusions}(ii), noting that the linearized SDE satisfied by $\mu^{\otimes k}$ is well-posed by assumption (1) of the theorem. We deduce the claimed absolute continuity of $t \mapsto H^k_t$ and, for a.e.\ $t \in [0,T]$, the identity
\begin{align*}
\frac{d}{dt}H^k_t &= \frac{1}{2}\sum_{i=1}^k \E\Bigg[ \Bigg|\frac{1}{n-1}\sum_{j \neq i, \, j \le k}b(t,X^i,X^j) + \frac{n-k}{n-1}\widehat{b}^k_i(t,X^1,\ldots,X^k) - \langle \mu,b(t,X^i,\cdot)\rangle \Bigg|^2\Bigg].
\end{align*}
Rearrange and apply the elementary inequality $\tfrac12(x+y)^2 \le x^2+y^2$ to get
\begin{align*}
\frac{d}{dt}H^k_t \le \, & \sum_{i=1}^k \E\Bigg[ \bigg|\frac{1}{n-1}\sum_{j \neq i, \, j \le k}\big(b(t,X^i,X^j) - \langle \mu,b(t,X^i,\cdot)\rangle  \big)\bigg|^2\Bigg]  \\
&\quad + \sum_{i=1}^k \E\Bigg[ \bigg|\frac{n-k}{n-1}\big(\E\big[b(t,X^i,X^n) \,|\, \F^k_t \big] - \langle \mu,b(t,X^i,\cdot)\rangle \big) \bigg|^2\Bigg].
\end{align*}
Complete the proof of \eqref{def:init-entropybound-dyn} by using exchangeability.
\end{proof}

\begin{remark} \label{re:BBGKY}
The equation \eqref{pf:def:SDE-Pk} can be seen as a non-Markovian form of the BBGKY hierarchy. For each $k$, it describes the marginal law $P^{(k)}$ in terms of the  $k$-particle SDE system governed by the drift $\widehat{b}^k_i$ defined in \eqref{def:bhat-dyn}. This drift depends on the conditional law of particle $k+1$ given the first $k$. In other words, the $k$-particle marginal dynamics depend on the $(k+1)$-particle marginal dynamics, hence the term \emph{hierarchy}.
\end{remark}

With these preparations, we are now ready for the main line of the proof of Theorem \ref{th:intro-dynamic}. Recall the definition of $H^k_t$ in \eqref{abbrev:H^k1}. We begin from \eqref{def:init-entropybound-dyn}, for a given $k < n$. Use convexity of $x\mapsto x^2$ to deduce the crude bound
\begin{align}
\frac{k}{(n-1)^2}\E\Bigg[ \Bigg|\sum_{j =2}^k \bigg(b(t,X^1,X^j) - \langle \mu,b(t,X^1,\cdot)\rangle  \bigg)\Bigg|^2\Bigg] \le \frac{k(k-1)^2}{(n-1)^2}M. \label{pf:dyn-crude}
\end{align}
Use this along with the trivial bound $(n-k)/(n-1) \le 1$ in \eqref{def:init-entropybound-dyn} to get
\begin{align}
\frac{d}{dt}H^k_t \le &\frac{k(k-1)^2}{(n-1)^2}M + k\E\left[ \left|\E\big[b(t,X^1,X^n) \,|\, \F^k_t \big] - \langle \mu,b(t,X^1,\cdot)\rangle \right|^2 \right]. \label{pf:dyn1}
\end{align}
To estimate the last term, let $P^{(k+1|k)}_{X^1,\ldots,X^k}[t](dx^{k+1})$ denote a version of the regular conditional law of $X^n|_{[0,t]}$ given $(X^1,\ldots,X^k)|_{[0,t]}$. The equivalent form \eqref{asmp:dyn-transp-equiv} of the assumed transport-type inequality yields
\begin{align*}
\big|\E\big[b(t,X^1,X^n) \,|\, \F^k_t \big] - \langle \mu,b(t,X^1,\cdot)\rangle \big|^2 &= \big| \langle P^{(k+1|k)}_{X^1,\ldots,X^k}[t] - \mu[t],b(t,X^1,\cdot)\rangle \big|^2 \\
	&\le \transpconst H\big( P^{(k+1|k)}_{X^1,\ldots,X^k}[t] \,\big|\, \mu[t] \big), \ \ a.s.
\end{align*}
Moreover, the chain rule for relative entropy \eqref{def:chainrule} implies
\begin{align*}
\E \, H\big( P^{(k+1|k)}_{X^1,\ldots,X^k}[t] \,\big|\, \mu[t] \big) = H^{k+1}_t - H^k_t.
\end{align*}
(Indeed, apply \eqref{def:chainrule} by identifying $m^1 \to P^{(k)}[t]$, $m^2 \to \mu^{\otimes k}[t]$, $K^1_x \to P^{(k+1|k)}_{x^1,\ldots,x^k}[t]$, and $K^2_x \to \mu[t]$.)
Returning to \eqref{pf:dyn1}, we thus deduce
\begin{align*}
\frac{d}{dt}H^k_t \le \frac{k(k-1)^2}{(n-1)^2}M + \transpconst k \big(H^{k+1}_t - H^k_t \big).
\end{align*}
By Gronwall's inequality, we have
\begin{align*}
H^k_t &\le e^{-\transpconst k t} H^k_0 + \int_0^t e^{-\transpconst k (t-s)}\left(\frac{k(k-1)^2}{(n-1)^2}M + \transpconst k H^{k+1}_s \right) ds.
\end{align*}
Now, if we iterate this inequality $n-k$ times, and use in the last iteration the fact that $t \mapsto H_t^n$ is non-decreasing by \eqref{ineq:dataproc1}, we find 
\begin{align}
H^k_T &\le \sum_{\ell=k}^{n-1} \left[B_k^\ell(T)H^\ell_0  +  \frac{(\ell-1)^2 M}{\transpconst (n-1)^2 } A_k^\ell(T)\right] + A_k^{n-1}(T) H^n_T, \label{pf:dyn-est1}
\end{align}
where we define
\begin{align}
\begin{split}
A_k^\ell(t_k) &:= \bigg(\prod_{j=k}^{\ell} \transpconst j \bigg) \int_0^{t_k}\int_0^{t_{k+1}}\cdots\int_0^{t_{\ell}} e^{- \sum_{j=k}^{\ell} \transpconst j(t_j - t_{j+1}) } dt_{\ell+1}\cdots dt_{k+2}dt_{k+1}, \\
B_k^\ell(t_k) &= \bigg(\prod_{j=k}^{\ell-1}  \transpconst j \bigg) \int_0^{t_k}\int_0^{t_{k+1}}\cdots\int_0^{t_{\ell-1}} e^{ -  \transpconst \ell t_{\ell} - \sum_{j=k}^{\ell-1}  \transpconst j (t_j - t_{j+1}) } dt_{\ell}\cdots dt_{k+2}dt_{k+1},
\end{split} \label{def:ABterms}
\end{align}
for $t_k \ge 0$ and $n \ge \ell \ge k \ge 1$, with $B_k^{k}(t) := e^{-\transpconst k t}$.

The assumption \eqref{asmp:dyn-init} along with \eqref{def:init-entropybound-dyn2} together imply $H^n_T \le C_0 + MnT$. Hence,
\begin{align}
H^k_T &\le \frac{C_0}{n^2}\sum_{\ell=k}^{n-1} \ell^2 B_k^\ell(T) +  \frac{ M}{\transpconst (n-1)^2 } \sum_{\ell=k}^{n-1} (\ell-1)^2 A_k^\ell(T) + A_k^{n-1}(T)(C_0 + MnT ), \label{pf:dyn-est2}
\end{align}
This is essentially the sharpest estimate we have obtained, and the rest of the proof is devoted to its simplification. 
In particular, a decent amount of work is required to simplify and estimate the quantities $A_k^\ell(t)$ and $B_k^\ell(t)$, and the following lemma summarizes the computations needed for the proof at hand.

\begin{lemma} \label{le:AB-estimate}
For integers $\ell > k \ge 1$ and real numbers $t > 0$, we have
\begin{align}
A_k^\ell(t) &\le \exp\left( -2 (\ell+1) \left( e^{-\transpconst t} - \frac{k}{\ell+1} \right)_+^2\right),  \label{est:Atail}
\end{align}
Moreover, for integers $r \ge 0$, 
\begin{align}
\sum_{\ell=k}^\infty \ell^r A_k^\ell(t) &\le \frac{(k+r)!}{(k-1)!} \frac{e^{\transpconst (r+1) t} - 1}{r+1}, \label{est:Asum} \\
\sum_{\ell=k}^\infty \ell^2 B_k^\ell(t) &\le k(k+1) e^{ 2\transpconst t} \le 2k^2e^{2\transpconst t}. \label{est:Bsum}
\end{align}
\end{lemma}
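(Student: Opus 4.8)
The plan is to recognize the iterated integrals in \eqref{def:ABterms} as transition probabilities of a pure-birth process and then to read off all three estimates from elementary facts about it. Let $E_k,E_{k+1},E_{k+2},\dots$ be independent with $E_j \sim \mathrm{Exp}(\transpconst j)$, set $T_j := E_k + \dots + E_j$ for $j \ge k$ and $T_{k-1}:=0$, and put $N_t := \sup\{m \ge k : T_{m-1} \le t\}$. Since $\sum_{j \ge k} 1/(\transpconst j) = \infty$ we have $T_j \to \infty$ a.s., so $N$ is a.s.\ finite; it is exactly the pure-birth (Yule-type) process started from $k$ that jumps $j \to j+1$ at rate $\transpconst j$. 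The substitution $\tau_j := t - t_{j+1}$ (for $j=k,\dots,\ell$ in $A_k^\ell$, and $j=k,\dots,\ell-1$ in $B_k^\ell$) turns each integrand in \eqref{def:ABterms} into the joint density of $(T_k,\dots,T_\ell)$ on the simplex $\{0 \le \tau_k \le \dots \le \tau_\ell \le t\}$ for $A$, and of $(T_k,\dots,T_{\ell-1})$ on $\{0 \le \tau_k \le \dots \le \tau_{\ell-1}\le t\}$ times the extra factor $e^{-\transpconst\ell(t-\tau_{\ell-1})} = \PP(E_\ell > t-\tau_{\ell-1})$ for $B$; using that $(T_j)$ is increasing and that $E_\ell$ is independent of $(T_k,\dots,T_{\ell-1})$, this gives $A_k^\ell(t) = \PP(T_\ell \le t) = \PP(N_t \ge \ell+1)$ and $B_k^\ell(t) = \PP(T_{\ell-1} \le t < T_\ell) = \PP(N_t = \ell)$ for all $\ell \ge k$ (the base cases $A_k^k(t)=1-e^{-\transpconst kt}$, $B_k^k(t)=e^{-\transpconst kt}$ agreeing). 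A reader preferring to avoid probabilistic language can instead verify directly from \eqref{def:ABterms} that $(B_k^\ell)_{\ell\ge k}$ solves the Kolmogorov forward system of $N$ and that $A_k^\ell = \sum_{m>\ell}B_k^m$, which is the same content with more bookkeeping.

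Two standard facts about $N$ will be used. First, a single-ancestor Yule process at time $t$ has a geometric law with success parameter $p := e^{-\transpconst t}$, so splitting $N$ into the $k$ independent sub-populations descending from the initial individuals gives $N_t \sim \mathrm{NegBin}(k,p)$, i.e.\ $N_t$ equals the number of independent $\mathrm{Bernoulli}(p)$ trials needed to record $k$ successes. Second, writing $\phi_m(x) := x(x+1)\cdots(x+m-1)$, the rising factorial moments are $\E[\phi_m(N_t)] = \frac{(k+m-1)!}{(k-1)!}e^{\transpconst m t}$ for $m\ge 0$; this follows from the generator identity $x\bigl(\phi_m(x+1)-\phi_m(x)\bigr) = m\,\phi_m(x)$, which makes $t \mapsto \E[\phi_m(N_t)]$ solve $g' = \transpconst m\, g$ with $g(0)=\phi_m(k)$ (alternatively, from the $\mathrm{NegBin}$ pmf and $\sum_{j\ge 0}\binom{n+j}{n}q^j = (1-q)^{-(n+1)}$).

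For \eqref{est:Atail}: by the negative-binomial/binomial duality, $A_k^\ell(t) = \PP(N_t \ge \ell+1) = \PP(\mathrm{Bin}(\ell,p)\le k-1)$. If $p \le k/(\ell+1)$ the asserted bound is $\ge 1$ and nothing is to prove; otherwise $\ell p > \ell k/(\ell+1) \ge k-1$, so Hoeffding's inequality for the lower binomial tail gives $\PP(\mathrm{Bin}(\ell,p)\le k-1) \le \exp\!\bigl(-2(\ell p - k + 1)^2/\ell\bigr)$, and since $0 < (\ell+1)p - k \le \ell p - k + 1$ and $\ell < \ell+1$ this is at most $\exp\!\bigl(-2((\ell+1)p-k)^2/(\ell+1)\bigr) = \exp\!\bigl(-2(\ell+1)(p - k/(\ell+1))_+^2\bigr)$, which is \eqref{est:Atail} with $p=e^{-\transpconst t}$. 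For \eqref{est:Asum}: $\sum_{\ell\ge k}\ell^r A_k^\ell(t) = \E\bigl[\sum_{\ell=k}^{N_t-1}\ell^r\bigr]$ (empty sum $=0$ when $N_t=k$), and using $\ell^r \le \phi_r(\ell)$ together with the telescoping identity $\phi_{r+1}(x)-\phi_{r+1}(x-1) = (r+1)\phi_r(x)$, hence $\sum_{\ell=k}^{N-1}\phi_r(\ell) = \frac{\phi_{r+1}(N-1)-\phi_{r+1}(k-1)}{r+1}$, and then $\phi_{r+1}(x-1)=\phi_{r+1}(x)-(r+1)\phi_r(x)$ with the moment formula, the desired bound reduces to the elementary inequality $(r+1)e^{\transpconst r t} + (k-1) \ge k+r$, which holds since $e^{\transpconst rt}\ge 1$. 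Finally \eqref{est:Bsum}: $\sum_{\ell\ge k}\ell^2 B_k^\ell(t) = \E[N_t^2] \le \E[\phi_2(N_t)] = k(k+1)e^{2\transpconst t} \le 2k^2 e^{2\transpconst t}$.

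The main obstacle is the first step: carefully verifying the change of variables and the ordering of the iterated integrals in \eqref{def:ABterms} so as to obtain the probabilistic identification, and recording non-explosion of $N$, which is what legitimizes the $\mathrm{NegBin}$ description and the rising-moment formula. Once these are in place, \eqref{est:Atail} is a one-line Hoeffding estimate, \eqref{est:Bsum} a one-line moment bound, and \eqref{est:Asum} a short computation with rising factorials.
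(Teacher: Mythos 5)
Your proof is correct and takes a genuinely different route from the paper's. The paper proves a more general Proposition \ref{pr:ABestimate}: it computes the convolution $h_0*\cdots*h_\ell$ of distinct exponential densities via a partial-fractions identity, changes variables to identify $A_\ell(t)$ as the upper tail $\PP(Y \ge e^{-bt})$ of a $\mathrm{Beta}(a/b,\ell+1)$ variable, and then invokes the subgaussian bound for Beta distributions of Marchal--Arbel; the sums are handled through Gamma-function identities. You instead read the iterated integrals directly as hitting/holding probabilities of a Yule process $N$ started from $k$, identify the classical $\mathrm{NegBin}(k,e^{-\transpconst t})$ law of $N_t$, convert \eqref{est:Atail} into a lower binomial tail via $\PP(N_t\ge\ell+1)=\PP(\mathrm{Bin}(\ell,p)\le k-1)$, and close it with Hoeffding's inequality; the sums come from the rising-factorial moment formula $\E[\phi_m(N_t)]=\frac{(k+m-1)!}{(k-1)!}e^{\transpconst m t}$, which you derive in one line from the generator. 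Both arguments are comparable in length and, after transcription, yield exactly the constants stated in the lemma (your Hoeffding bound gives precisely the $2(\ell+1)(\cdot)_+^2$ exponent, slightly weaker than the $2(\ell+2)(\cdot)_+^2$ the paper's Proposition \ref{pr:ABestimate} produces in this integer case, but this is all the lemma claims). Your route is more self-contained and probabilistically transparent — it avoids the explicit convolution formula and the external Beta subgaussian bound, replacing them with Hoeffding and elementary birth-process facts. What the paper's route buys is generality: Proposition \ref{pr:ABestimate} is stated and proved for arbitrary real $a/b$, and it is reused in Section \ref{se:proofs-inf} with $a=8\bar{s}_0^2 k$, $b=8\bar{s}_0^2 \bar\ell$, where $a/b=k/\bar\ell$ need not be an integer; your Yule/NegBin identification requires $a/b$ to be a positive integer (the paper's own remark after Lemma \ref{le:Atransform-Beta} flags exactly this probabilistic picture in the integer case, via R\'enyi's order-statistics representation). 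So your argument cleanly replaces the proof of Lemma \ref{le:AB-estimate} itself, but not the broader Proposition \ref{pr:ABestimate}.
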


This follows from the more general Proposition \ref{pr:ABestimate}, proven in Section \ref{se:ABestimates}, applied with $a=\transpconst k$ and $b=\transpconst$.
Next, apply \eqref{est:Asum} with $r=2$ to get
\begin{align*}
\sum_{\ell=k}^{n-1}  (\ell-1)^2 A_k^\ell(T) &\le \sum_{\ell=k}^\infty  \ell^2 A_k^\ell(T) \le \frac13 k(k+1)(k+2) (e^{3\transpconst T} - 1) \\
	&\le \frac53 k^3 (e^{3\transpconst T} - 1) \le 5 \transpconst T k^3 e^{3\transpconst T}.
\end{align*}
Noting also that $(\ell-1)/(n-1) \le \ell/n$, we thus estimate the second term of \eqref{pf:dyn-est2}  by $5MT\frac{k^3}{ n^2}e^{3\transpconst T}$.
Lastly, using \eqref{est:Atail} to estimate the third term of  \eqref{pf:dyn-est2} and putting it all together, 
\begin{align}
H^k_T &\le \frac{1}{n^2}e^{3\transpconst T}( 2C_0k^2  +  5MTk^3 ) + ( C_0 + MnT ) \exp\left( -2 n \left( e^{-\transpconst T} - \frac{k}{n} \right)_+^2\right). \label{pf:dyn-est3}
\end{align}

This is not the desired result, because it is $O(k^3/n^2)$ instead of the announced $O(k^2/n^2)$. The idea at this point is to use \eqref{pf:dyn-est3} in order to improve the crude estimate of \eqref{pf:dyn-crude}.
Specifically, applying \eqref{pf:dyn-est3} with $k=3$, we find
\begin{align*}
H^3_T \le \frac{9}{n^2}e^{3\transpconst T}( 2C_0  + 15MT ) + ( C_0 +  MnT ) \exp\left( -2 n \left( e^{-\transpconst T} - \frac{3}{n} \right)_+^2\right).
\end{align*} 
In addition, for $n \ge 6 e^{\transpconst T}$ we have 
\begin{align*}
n \exp\bigg( -2 n \bigg( e^{-\transpconst T} - \frac{3}{n} \bigg)_+^2\bigg) &\le n\exp\bigg( -\frac12 n e^{- 2\transpconst T}\bigg) \le \frac{48 e^{6 \transpconst T}}{n^2},
\end{align*}
with the last step using the inequality $e^{-x} \le 6/x^3$ for $x > 0$. Hence, 
\begin{align}
H^3_T \le \frac{\widehat{C}_0}{n^2}, \quad \text{where} \quad \widehat{C}_0 &:= 9 e^{3\transpconst T}( 2C_0  + 15 MT ) + 48(C_0 +  MT ) e^{6 \transpconst T} \label{pf:dyn-est4} \\
	&\ \le 183(C_0+MT)e^{6\transpconst T}. \nonumber
\end{align}
We may now improve \eqref{pf:dyn-crude} as follows. Instead of using convexity of $x \mapsto x^2$, we expand the square and use exchangeability to get the exact identity
\begin{align*}
&\frac{k}{(n-1)^2}\E\Bigg[ \Bigg|\sum_{j =2}^k \bigg(b(t,X^1,X^j) - \langle \mu,b(t,X^1,\cdot)\rangle  \bigg)\Bigg|^2\Bigg] =  \\
&\quad = \frac{k(k-1)(k-2)}{(n-1)^2} \E\left[\left(b(t,X^1,X^2) - \langle \mu,b(t,X^1,\cdot)\rangle\right) \cdot \left(b(t,X^1,X^3) - \langle \mu,b(t,X^1,\cdot)\rangle\right)\right] \\
&\quad\qquad +  M\frac{k(k-1)}{(n-1)^2}.
\end{align*}
To handle the first term, let $P^{(3|2)}_{X^1,X^2}$ denote a version of the conditional law of $X^3$ given $(X^2,X^1)$. Condition on $(X^1,X^2)$ and use Cauchy-Schwarz followed by the assumption \eqref{asmp:dyn-transp} to get
\begin{align*}
\E &\left[ \left(b(t,X^1,X^2) - \langle \mu,b(t,X^1,\cdot)\rangle\right) \cdot \left(b(t,X^1,X^3) - \langle \mu,b(t,X^1,\cdot)\rangle\right)\right] \\
	&= \E \left[\left(b(t,X^1,X^2) - \langle \mu,b(t,X^1,\cdot)\rangle\right) \cdot \langle P^{(3|2)}_{X^1,X^2} - \mu,b(t,X^1,\cdot)\rangle \right] \\
	&\le M^{1/2} \E \left[ \big|\langle P^{(3|2)}_{X^1,X^2} - \mu,b(t,X^1,\cdot)\rangle \big|^2\right]^{1/2} \\
	&\le M^{1/2} \left(\transpconst \E \left[ H\big(P^{(3|2)}_{X^1,X^2}\,\big|\,\mu \big) \right] \right)^{1/2}.
\end{align*}
Using the inequality \eqref{def:chainrule2} coming from the chain rule for relative entropy, along with the estimate $H^3_T \le \widehat{C}_0/n^2$ from \eqref{pf:dyn-est4}, we bound this further by
\begin{align*}
M^{1/2}\left(\transpconst  H^3_T \right)^{1/2} \le \frac{1}{n}\sqrt{M \transpconst  \widehat{C}_0}.
\end{align*}
Putting it together, we have thus shown the following bound:
\begin{align}
\frac{k}{(n-1)^2}&\E\Bigg[ \Bigg|\sum_{j =2}^k \bigg(b(t,X^1,X^j) - \langle \mu,b(t,X^1,\cdot)\rangle  \bigg)\Bigg|^2\Bigg] \nonumber \\
	&\le M\frac{k(k-1)}{(n-1)^2} + \sqrt{M \transpconst \widehat{C}_0}\frac{k(k-1)(k-2)}{ n(n-1)^2} \nonumber \\
	&\le \left(2M + \sqrt{M \transpconst \widehat{C}_0} \right)\frac{k^2}{n^2}, \label{pf:dyn-est5}
\end{align}
with the last step using the simple inequalities $n \le 2(n-1)$ and $(k-1)/(n-1) \le k/n$.
This replaces the first term on the right-hand side of \eqref{pf:dyn1}. The second term remains the same and is bounded as before by $\transpconst k (H^{k+1}_t-H^k_t)$, which yields 
\begin{align*}
\frac{d}{dt}H^k_t &\le \left(2M + \sqrt{M \transpconst \widehat{C}_0} \right)\frac{k^2}{ n^2} + \transpconst k (H^{k+1}_t-H^k_t).
\end{align*}
Apply Gronwall's inequality and iterate to get the following variant of \eqref{pf:dyn-est1}:
\begin{align}
H^k_T &\le \sum_{\ell=k}^{n-1} \left[B_k^\ell(T)\frac{C_0\ell^2}{n^2}  +  \left(2M + \sqrt{M \transpconst \widehat{C}_0} \right)\frac{\ell }{\transpconst n^2 } A_k^\ell(T)\right] + A_k^{n-1}(T) H^n_T. \label{pf:dyn-est1'}
\end{align}
The point is that now we have merely $(\ell-1)$ in the second term, rather than $(\ell-1)^2$.

We handle the first term of \eqref{pf:dyn-est1'} again using \eqref{est:Bsum}.
To handle the second term of \eqref{pf:dyn-est1'}, we apply \eqref{est:Asum} with $r=1$ now (instead of $r=2$ as above) to get
\begin{align}
\sum_{\ell=k}^{n-1}  \ell A_k^\ell(T) &\le  \frac12 k(k+1) (e^{2\transpconst T} - 1) \le k^2 (e^{2\transpconst T} - 1)  \le 2\transpconst T k^2 e^{2\transpconst T}.  \label{pf:dyn-Abound1}
\end{align}
To handle the final term of \eqref{pf:dyn-est1'}, apply \eqref{def:init-entropybound-dyn} for $k=n$, noting that the second term vanishes and using \eqref{pf:dyn-est5} to estimate the first term:
\begin{align*}
H^n_T &\le H^n_0 + T\left(2M + \sqrt{M \transpconst \widehat{C}_0}\right) \le C_0 + T\left(2M + \sqrt{M \transpconst \widehat{C}_0}\right),
\end{align*}
where the last step used the assumption \eqref{asmp:dyn-init}.
Putting it together, from \eqref{pf:dyn-est1'}  we get
\begin{align}
H^k_T &\le  \frac{2C_0 k^2}{n^2} e^{2 \transpconst T}  +  2 T\left(2M + \sqrt{M \transpconst \widehat{C}_0} \right)\frac{k^2}{ n^2} e^{2\transpconst T} + A_k^{n-1}(T) \left(C_0 +  2MT + T\sqrt{M \transpconst \widehat{C}_0}\right) \nonumber \\
	&\le  \left(2C_0 + 2T\left(2M + \sqrt{M \transpconst \widehat{C}_0} \right) \right)e^{2\transpconst T }\frac{k^2}{n^2} \nonumber \\
	&\qquad + \left(C_0 + 2MT + T\sqrt{M \transpconst \widehat{C}_0}\right)\exp\left( -2 n \left( e^{-\transpconst T} - \frac{k}{n} \right)_+^2\right). \label{pf:dyn-est2'}
\end{align}
Finally, we simplify the form of the constants in \eqref{pf:dyn-est2'}. For the second, use Young's inequality:
\begin{align*}
C_0 +  2MT  + T\sqrt{M \transpconst \widehat{C}_0} &\le C_0 + (2 + 8\transpconst)MT + \frac{1}{32}\widehat{C}_0 \\
	&\le C_0 + (2+8\transpconst) MT + \frac{183}{32}(C_0+MT)e^{6\transpconst T} \\
	&\le e^{6 \transpconst T}\left( 7 C_0 +  8\transpconst M T + 8 M T \right) \le C.
\end{align*}
The first constant is  bounded similarly:
\begin{align*}
2C_0 + 2 T \left(2M + \sqrt{M \transpconst \widehat{C}_0} \right) &\le 2C_0 +  2 T \left(2M +  8 \transpconst M  + \frac{1}{32}  \widehat{C}_0 \right) \\
	&\le 2C_0 + 4 M T + 16 \transpconst M T  + \frac{183}{16}(C_0+MT)e^{6\transpconst T} \\
	&\le e^{6 \transpconst T}\left(14 C_0 + 16 MT + 16 \transpconst M T \right) \le 2C.
\end{align*}
{\ } \vskip-1.06cm \hfill\qedsymbol

\begin{remark}
In estimating the second term of \eqref{pf:dyn1}, it is natural to try truncating $b$ to be bounded, so that Pinsker's inequality may be used in the subsequent estimate, without any need for the transport-type inequality assumed in Theorem \ref{th:intro-dynamic}. But this does not seem to work: Assuming merely the uniform integrability condition like
\begin{align*}
M(r) &:= \sup_{n \in \N}\sup_{t \in [0,T]}\int_{\C_T^d \times \C_T^d} |b(t,\cdot)|^21_{\{|b(t,\cdot)|^2 > r\}} \,d(P^{(2)} + P^{(1)} \times \mu) \to 0, \ \ \text{ as } r \to \infty,
\end{align*}
one might hope to remove any truncation error after first sending $n\to\infty$ (say, for fixed $k$). Keeping track of the extra error term, the following estimate can be shown for each $r > 0$:
\begin{align*}
H^{k}_T &\le \sum_{\ell=k}^{n-1} \left[B_k^\ell(T)H^{\ell}_0  +  \frac{(\ell-1)^2 M}{r (n-1)^2 } A_k^\ell(T) + \frac{M(r)}{r}  A_k^\ell(T)\right] + A_k^{n-1}(T) H^{n}_T,
\end{align*}
where $A_k^\ell$ and $B_k^\ell$ are defined in exactly the same manner except with $\transpconst = 3r$. For fixed $r$, all but the second to last term will vanish as $n \to \infty$. But $\sum_{\ell=k}^{n-1}A_k^\ell(T)$ is of order $e^{3rT}$, and we will thus need $M(r)e^{3rT} \to 0$ as $r\to\infty$ in order to get this remaining term to vanish. This requires a subexponential tail bound for $|b|^2$, which essentially leads back to an exponential integrability assumption like in Lemma \ref{le:integ-transp-equiv}(b).
\end{remark}

\subsection{Proof of Theorem \ref{th:intro-dynamic-rev}}  \label{se:proofs:rev}

Throughout this section, we define
\begin{align*}
H^k_t := H(\mu^{\otimes k}[t] \,|\, P^{(k)}[t]),
\end{align*}
for $1 \le k \le n$ and $t \in [0,T]$. Let $Y^1,\ldots,Y^n$ denote i.i.d.\ processes with law $\mu$. Thus, for some independent Brownian motions $W^1,\ldots,W^n$, we have
\begin{align*}
dY^i_t = \left(b_0(t,Y^i) + \langle \mu,b(t,Y^i,\cdot)\rangle \right)dt + dW^i_t, \qquad i=1,\ldots,n.
\end{align*}
Define the conditional expectation functions $\widehat{b}^k_i$ exactly as in \eqref{def:bhat-dyn}. We begin again with an entropy estimate:

\begin{lemma} \label{le:init-entropybound-dyn-rev}
Grant the assumptions of Theorem \ref{th:intro-dynamic-rev}, and let $1 \le k \le n$. 
Then $t \mapsto H^k_t$ is absolutely continuous, and for a.e.\ $t \in [0,T]$ we have
\begin{align}
\begin{split}
\frac{d}{dt}H^k_t \le &\frac{k(k-1)}{(n-1)^2} \transpconst  + \frac{k(n-k)^2}{(n-1)^2}\E\left[ \left|\widehat{b}^k_1(t,Y^1,\ldots,Y^k) - \langle \mu,b(t,Y^1,\cdot)\rangle \right|^2\right],
\end{split} \label{def:init-entropybound-dyn-rev}
\end{align}
where we recall that $\transpconst:=2\||b|^2\|_\infty$.
Moreover, 
\begin{align}
H^k_t \le C_0 + \transpconst T. \label{def:init-entropybound-dyn2-rev}
\end{align}
\end{lemma}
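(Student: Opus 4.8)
I would repeat the proof of Lemma~\ref{le:init-entropybound-dyn} almost verbatim, the crucial difference being that the reversed entropy $H(\mu^{\otimes k}[t]\,|\,P^{(k)}[t])$ is computed by integrating against the \emph{product} measure $\mu^{\otimes k}$, so that a zero-conditional-mean cancellation is available essentially for free (this is why the transport-type inequality of Theorem~\ref{th:intro-dynamic} is not needed here). First, I invoke the projection Lemma~\ref{le:proj-pathdep}, exactly as in Step~2 of the proof of Lemma~\ref{le:init-entropybound-dyn}, to see that the first $k$ coordinates of the $n$-particle system, which have law $P^{(k)}$, solve the BBGKY-type SDE \eqref{pf:def:SDE-Pk} with drift $\beta^k=(\beta^k_1,\dots,\beta^k_k)$ given by $\beta^k_i(t,x)=b_0(t,x^i)+\tfrac{1}{n-1}\sum_{j\le k,\,j\ne i}b(t,x^i,x^j)+\tfrac{n-k}{n-1}\,\widehat b^k_i(t,x)$, with $\widehat b^k_i$ the conditional-expectation function from \eqref{def:bhat-dyn}. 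Since $b$ is bounded, $\widehat b^k_i$ is bounded, so $\beta^k-b_0^{\otimes k}$ is bounded; as $\mathrm{SDE}(b_0^{\otimes k})$ is well-posed by hypothesis, Girsanov's theorem gives well-posedness of $\mathrm{SDE}(\beta^k)$, of the $n$-particle SDE, and of the linearized $\mathrm{SDE}(\overline b^{\otimes k})$ with $\overline b(t,x):=b_0(t,x)+\langle\mu,b(t,x,\cdot)\rangle$. Now $\mu^{\otimes k}=\mathrm{Law}(Y^1,\dots,Y^k)$ solves $\mathrm{SDE}(\overline b^{\otimes k})$ and $P^{(k)}$ solves $\mathrm{SDE}(\beta^k)$, both with bounded drift, so Lemma~\ref{le:entropy-diffusions}(iii) applies with $(b^1,b^2)=(\overline b^{\otimes k},\beta^k)$ and gives
\[
H^k_t=H^k_0+\tfrac12\,\E\int_0^t\bigl|\overline b^{\otimes k}(s,Y)-\beta^k(s,Y)\bigr|^2\,ds,\qquad Y:=(Y^1,\dots,Y^k),
\]
which yields the absolute continuity of $t\mapsto H^k_t$ and the a.e.\ formula $\tfrac{d}{dt}H^k_t=\tfrac12\,\E|\overline b^{\otimes k}(t,Y)-\beta^k(t,Y)|^2$. (One must note that $\widehat b^k_i$, a priori defined only $P^{(k)}$-a.e., is meaningfully evaluated along $Y\sim\mu^{\otimes k}$ because $P^{(k)}_t$ and $\mu^{\otimes k}_t$ are mutually absolutely continuous by Girsanov.)

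Next I expand, using $\tfrac{k-1}{n-1}+\tfrac{n-k}{n-1}=1$,
\[
\overline b^{\otimes k}_i(s,Y)-\beta^k_i(s,Y)=\frac{1}{n-1}\sum_{j\le k,\,j\ne i}\bigl(\langle\mu,b(s,Y^i,\cdot)\rangle-b(s,Y^i,Y^j)\bigr)+\frac{n-k}{n-1}\bigl(\langle\mu,b(s,Y^i,\cdot)\rangle-\widehat b^k_i(s,Y)\bigr),
\]
apply $\tfrac12|u+v|^2\le|u|^2+|v|^2$, and use exchangeability of $P^{(n)}$ to split $\tfrac{d}{dt}H^k_t$ into the two terms of \eqref{def:init-entropybound-dyn-rev}. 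The key point is that, conditionally on $Y^i$, the increments $\langle\mu,b(s,Y^i,\cdot)\rangle-b(s,Y^i,Y^j)$, $j\ne i$, are i.i.d.\ with mean zero (since $Y^j\sim\mu$ is independent of $Y^i$), so all cross terms vanish and the first contribution reduces to $\sum_{i=1}^k\tfrac{k-1}{(n-1)^2}\E|\langle\mu,b(s,Y^1,\cdot)\rangle-b(s,Y^1,Y^2)|^2\le\tfrac{k(k-1)}{(n-1)^2}\||b|^2\|_\infty\le\tfrac{k(k-1)}{(n-1)^2}\transpconst$, where $\transpconst=2\||b|^2\|_\infty$ and a conditional variance is bounded by the conditional second moment. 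The second contribution is exactly $\tfrac{k(n-k)^2}{(n-1)^2}\E|\widehat b^k_1(s,Y)-\langle\mu,b(s,Y^1,\cdot)\rangle|^2$ by exchangeability. This proves \eqref{def:init-entropybound-dyn-rev}.

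For the crude bound \eqref{def:init-entropybound-dyn2-rev}, I reduce to $k=n$ by the data-processing inequalities, so $H^k_t\le H^n_t\le H^n_T$; at $k=n$ the conditional term is absent ($\beta^n$ is precisely the original $n$-particle drift), and Lemma~\ref{le:entropy-diffusions}(iii) with $(b^1,b^2)=(\overline b^{\otimes n},\beta^n)$ together with the same zero-mean cancellation gives $\E|\overline b^{\otimes n}_i(s,Y)-\beta^n_i(s,Y)|^2=\tfrac{1}{n-1}\E|\langle\mu,b(s,Y^1,\cdot)\rangle-b(s,Y^1,Y^2)|^2\le\tfrac{\||b|^2\|_\infty}{n-1}$. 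Hence $H^n_T\le H^n_0+\tfrac{n}{2(n-1)}\||b|^2\|_\infty T\le C_0+\transpconst T$, using $H^n_0\le C_0$ from \eqref{asmp:dyn-init-rev}, $n\ge 2$, and $\transpconst=2\||b|^2\|_\infty$.

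The real content is the elementary zero-conditional-mean observation, which does all the work; the main obstacle is purely the bookkeeping around Lemma~\ref{le:entropy-diffusions} — establishing well-posedness of the marginal (BBGKY) SDEs and of the linearized McKean--Vlasov SDE so that the Girsanov-based entropy identity applies, and making sure the conditional-expectation drift $\widehat b^k$ is sensibly evaluated against $\mu^{\otimes k}$. A minor cosmetic point is that the constant $\tfrac{k(k-1)}{(n-1)^2}\transpconst$ stated in \eqref{def:init-entropybound-dyn-rev} is lossy by a factor of two, so no sharpness needs to be tracked.
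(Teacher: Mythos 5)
Your proposal is correct and follows essentially the same route as the paper: derive the BBGKY drift via the projection lemma, get well-posedness of the reference SDE from boundedness of $b$ plus Girsanov, apply the Girsanov entropy identity of Lemma \ref{le:entropy-diffusions}, and kill the cross terms by the independence of $Y^1,\ldots,Y^k$ under $\mu^{\otimes k}$, then reduce the crude bound to $k=n$ by monotonicity of $H^k_t$ in $k$ and $t$. Your direct treatment of the $k=n$ case is, if anything, slightly more careful than the paper's (which tacitly uses the factor-of-two slack in the diagonal bound), and the only nitpick is that for evaluating $\widehat b^k$ along $Y\sim\mu^{\otimes k}$ you only need (and only have) $\mu^{\otimes k}\ll P^{(k)}$, not mutual absolute continuity.
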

\begin{proof}
We follow very closely the proof of Lemma \ref{le:init-entropybound-dyn}. To justify an application of Lemma \ref{le:entropy-diffusions}, we must be careful to check is that the SDE \eqref{pf:def:SDE-Pk} satisfied by the reference measure $P^{(k)}$ is well-posed. This is the SDE$(\widetilde{b})$, where, for each $i=1,\ldots,k$, the function $\widetilde{b} : [0,T] \times (\C_T^d)^k \to (\R^d)^k$ has $i^\text{th}$ component given by
\begin{align*}
(t,x^1,\ldots,x^k) \mapsto b_0(t,x^i) + \frac{1}{n-1}\sum_{j \neq i, \, j \le k}b(t,x^i,x^j) + \frac{n-k}{n-1}\widehat{b}^k_i(t,x^1,\ldots,x^k).
\end{align*}
We know by assumption that the SDE($b_0^{\otimes k}$) is well-posed. Since $b$ is bounded by assumption, the well-posedness of the SDE($\widetilde{b}$) follows from a standard application of Girsanov's theorem.

Following the first part of the proof of Lemma \ref{le:init-entropybound-dyn}, but with $Y$ in place of $X$,
we deduce the claimed absolute continuity from Lemma \ref{le:entropy-diffusions}(ii), along with the a.e.\ inequality
\begin{align*}
\begin{split}
\frac{d}{dt}H^k_t \le &\frac{k}{(n-1)^2}\E\Bigg[ \Bigg|\sum_{j =2}^k \big(b(t,Y^1,Y^j) - \langle \mu,b(t,Y^1,\cdot)\rangle  \big)\Bigg|^2\Bigg]   \\
&\quad + \frac{k(n-k)^2}{(n-1)^2}\E\left[ \left| \widehat{b}^k_1(t,Y^1,\ldots,Y^k) - \langle \mu,b(t,Y^1,\cdot)\rangle \right|^2\right].
\end{split} 
\end{align*}
Expanding the square in the first term, the off-diagonal terms vanish because $Y^1,\ldots,Y^k$ are i.i.d.\ with law $\mu$. 
Note for $j \neq 1$ that
\begin{align*}
\E\left[\big| b(t,Y^1,Y^j) - \langle \mu,b(t,Y^1,\cdot)\rangle  \big|^2\right] \le \||b|^2\|_\infty \le 2\||b|^2\|_\infty = \transpconst.
\end{align*}
 This yields \eqref{def:init-entropybound-dyn-rev}. Take $k=n$ in \eqref{def:init-entropybound-dyn-rev}, noting that the second term vanishes, to get $(d/dt)H^n_t \le \transpconst$ and thus $H^n_T \le H^n_0 + \transpconst T$. Assumption \eqref{asmp:dyn-init-rev} yields $H^n_0 \le C_0$, and the fact that $H^k_t$ is increasing in both $k$ and $t$ gives \eqref{def:init-entropybound-dyn2-rev}.
\end{proof}

We are now ready for the main line of the proof of Theorem \ref{th:intro-dynamic-rev}, which is similar to that of Theorem \ref{th:intro-dynamic}. We begin from \eqref{def:init-entropybound-dyn-rev}.
To estimate the second term therein, recall that $P^{(k+1|k)}_{X^1,\ldots,X^k}[t](dx^{k+1})$ is a version of the regular conditional law of $X^n|_{[0,t]}$ given $(X^1,\ldots,X^k)|_{[0,t]}$. The definition of $\widehat{b}^k_i$ from \eqref{def:bhat-dyn} and Pinsker's inequality (in the multivariate form of \eqref{ineq:Pinsker}) yield
\begin{align*}
\big|\widehat{b}^k_1(t,Y^1,\ldots,Y^k) - \langle \mu,b(t,Y^1,\cdot)\rangle \big|^2 &= | \langle P^{(k+1|k)}_{Y^1,\ldots,Y^k}[t] - \mu[t],b(t,Y^1,\cdot)\rangle|^2 \\
	&\le \transpconst H\big( \mu[t]  \,\big|\, P^{(k+1|k)}_{Y^1,\ldots,Y^k}[t] \big), \ \ a.s.,
\end{align*}
where we recall $\transpconst:=2\||b|^2\|_\infty$.
Moreover, the chain rule for relative entropy \eqref{def:chainrule} implies
\begin{align*}
\E \, H\big( \mu[t] \,\big|\, P^{(k+1|k)}_{Y^1,\ldots,Y^k}[t] \big) = H^{k+1}_t - H^k_t.
\end{align*}
(Indeed, apply \eqref{def:chainrule} by identifying $m^1 \to \mu^{\otimes k}[t]$, $m^2 \to P^{(k)}[t]$, and $K^1_x \to \mu[t]$, and $K^2_x \to P^{(k+1|k)}_{x^1,\ldots,x^k}[t]$.)
Apply this in \eqref{def:init-entropybound-dyn-rev} to get, for a.e.\ $t \in [0,T]$,
\begin{align*}
\frac{d}{dt}H^k_t \le \transpconst k \frac{ k-1}{(n-1)^2} +  \transpconst k \big(H^{k+1}_t - H^k_t \big).
\end{align*}
By Gronwall's inequality, and using $(k-1)/(n-1) \le k/n$ and $n \le 2(n-1)$, we have 
\begin{align*}
H^k_t &\le e^{-\transpconst k t} H^k_0 + \transpconst k  \int_0^t e^{-\transpconst k (t-s)}\left(2\frac{k}{n^2}  + H^{k+1}_s \right) ds.
\end{align*}
Iterate this inequality $n-k$ times  to get
\begin{align}
H^k_T &\le \sum_{\ell=k}^{n-1} \left[B_k^\ell(T)H^\ell_0  +  2\frac{ \ell }{ n^2} A_k^\ell(T)\right] + A_k^{n-1}(T)H^n_T, \label{pf:dyn-est1-rev}
\end{align}
where $A_k^\ell(T)$ and $B_k^\ell(T)$ are defined as in \eqref{def:ABterms}.

We handle the first term of \eqref{pf:dyn-est1-rev} using \eqref{est:Bsum}, which gave $\sum_{\ell=k}^{n-1} \ell^2 B_k^\ell(T) \le 2 k^2 e^{2 \transpconst T }$.
We handle the second term of \eqref{pf:dyn-est1-rev} as in \eqref{pf:dyn-Abound1} to get  $\sum_{\ell=k}^{n-1}  \ell A_k^\ell(T) \le 2\transpconst T \frac{k^2}{n^2} e^{2\transpconst T}$.
For the last term in \eqref{pf:dyn-est1-rev}, use \eqref{def:init-entropybound-dyn2-rev} to get $H^n_T \le C_0 + \transpconst T$.
Putting it together, from \eqref{pf:dyn-est1-rev} we deduce
\begin{align*}
H^k_T &\le 2C_0 \frac{k^2}{n^2}e^{2 \transpconst T} +  4\transpconst T \frac{k^2}{ n^2}e^{2\transpconst T} + A_k^{n-1}(T)(C_0+ \transpconst T ).
\end{align*}
Finally, use the estimate on $A_k^{n-1}(T)$ from \eqref{est:Atail} to get
\begin{align*}
H^k_T &\le 2(C_0 + 2\transpconst T) e^{2\transpconst T} \frac{k^2}{n^2} + (C_0+ \transpconst T ) \exp\left( -2 n\left( e^{-\transpconst T} - \frac{k}{n} \right)_+^2\right).
\end{align*}
{\ } \vskip-1.15cm \hfill\qedsymbol

{ \ }

\begin{remark} \label{re:relaxingboundedness}
The assumption that $b$ is bounded in Theorem \ref{th:intro-dynamic-rev} could likely be generalized to the case where the transport-type inequality
\begin{align}
\big|\big\langle \mu[t] - P^{(k+1|k)}_{x^1,\ldots,x^k}[t], b(t,x,\cdot)\big\rangle \big|^2 \le  \transpconst H(\mu[t]\,|\,P^{(k+1|k)}_{x^1,\ldots,x^k}[t]), \qquad \forall t,x,\nu,x^1,\ldots,x^k, \label{asmp:dyn-transp-rev}
\end{align}
holds for some $\transpconst < \infty$, as well as square-integrability as  in \eqref{asmp:dyn-moment} but with $\mu^{\otimes 2}$ in place of $P^{(2)}$.
This condition \eqref{asmp:dyn-transp-rev}, however, seems difficult to check in practice.
In addition, there is a technical subtlety here which is not present in the setting of Theorem \ref{th:intro-dynamic}. Our use of Girsanov's theorem to estimate relative entropy (see Lemma \ref{le:entropy-diffusions}) requires that the SDE satisfied by the \emph{reference measure} is well-posed. Here, $P^{(k)}$ is the reference measure, and its SDE involves the conditional expectation functions $\widehat{b}^k_i$ defined in \eqref{def:bhat-dyn}, which makes it is difficult to identify natural sufficient conditions for well-posedness other than boundedness.
\end{remark}

\section{Estimates of iterated exponential integrals}  \label{se:ABestimates}

Fix $a,b > 0$ throughout this section.
For $t_0 \ge 0$ and $n \ge \ell \ge 1$, define 
\begin{align*}
A_\ell(t_0) &:= \bigg(\prod_{j=0}^{\ell}(a+bj) \bigg) \int_0^{t_0}\int_0^{t_1}\cdots\int_0^{t_\ell} e^{- \sum_{j=0}^{\ell} (a+bj)(t_j - t_{j+1}) } dt_{\ell+1}\cdots dt_{2}dt_{1}, \\
B_\ell(t_0) &= \bigg(\prod_{j=0}^{\ell-1}(a+bj) \bigg) \int_0^{t_0}\int_0^{t_{1}}\cdots\int_0^{t_{\ell-1}} e^{ - (a+b\ell)t_{\ell} - \sum_{j=0}^{\ell-1} (a+bj)(t_j - t_{j+1}) } dt_{\ell}\cdots dt_{2}dt_{1},
\end{align*}
with the convention that $B_0(t) := e^{-at}$.
Iterated integrals of this form appeared in the arguments in Section \ref{se:proofs-dyn} with $a=\transpconst k$ and $b=\transpconst$ for $k \in \N$ and $\transpconst > 0$, and they will appear again in Section \ref{se:proofs-inf} with $a=\transpconst k$ and $b=\transpconst \bar\ell$ for $\bar\ell \in \N$. Recall that $x_+:=\max(x,0)$.

\begin{proposition} \label{pr:ABestimate}
Let $t > 0$.
For integers $\ell \ge 0$ we have
\begin{align*}
A_\ell(t) &\le \exp\left(-2(\ell+(a/b)+2)\left(e^{-bt} - \frac{(a/b)}{\ell+(a/b)+1} \right)_+^2 \right).
\end{align*}
For integers $p \ge 0$ we have
\begin{align*}
\sum_{\ell=0}^\infty ((a/b)+\ell)^p A_\ell(t) &\le \frac{e^{b(1+p)t} - 1}{1+p} \prod_{i=0}^p((a/b)+i), \\
\sum_{\ell=0}^\infty ((a/b)+\ell)^{1+p} B_\ell(t) &\le be^{b(1+p)t}\prod_{i=0}^p ((a/b)+i).
\end{align*}
\end{proposition}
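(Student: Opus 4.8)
The plan is to give the iterated integrals a probabilistic reading and then extract all three bounds from the law of a single auxiliary process. Let $(N_t)_{t\ge0}$ be the pure-birth Markov jump process on $\{0,1,2,\dots\}$ with $N_0=0$ and jump rate $\lambda_j:=a+bj$ out of state $j$, with independent holding times $E_j\sim\mathrm{Exp}(\lambda_j)$ and jump times $S_m:=E_0+\dots+E_{m-1}$. Writing out the joint density of $(S_1,\dots,S_m)$ and performing the substitution $s_j=t_0-t_j$ in the definitions of $A_\ell$ and $B_\ell$ identifies
\begin{align*}
A_\ell(t)=\PP(S_{\ell+1}\le t)=\PP(N_t\ge \ell+1),\qquad
B_\ell(t)=\PP(S_\ell\le t<S_{\ell+1})=\PP(N_t=\ell);
\end{align*}
equivalently, one can check directly from the definitions that $\tfrac{d}{dt}A_\ell=\lambda_\ell(A_{\ell-1}-A_\ell)$, $A_\ell(0)=0$ (with the convention $A_{-1}\equiv1$), which is the Kolmogorov forward equation for $N_t$. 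I would then compute the law of $N_t$ from its generating function $G(z,t):=\E[z^{N_t}]$, which solves $\partial_tG=(z-1)(aG+bz\,\partial_zG)$ with $G(z,0)=1$; solving along characteristics gives $G(z,t)=e^{-at}\bigl(1-(1-e^{-bt})z\bigr)^{-a/b}$, so that, with $\theta:=a/b$, the random variable $N_t$ is negative binomial with parameters $r=\theta$ and success probability $e^{-bt}$. From $G$ one reads off the rising-factorial moments (where $(x)_{(k)}:=x(x+1)\cdots(x+k-1)$)
\begin{align*}
\E[(\theta+N_t)_{(k)}]=\theta_{(k)}\,e^{bkt},\qquad
\E[(\theta-1+N_t)_{(p+1)}]=e^{bpt}\,\theta_{(p)}\bigl[(\theta-1)+(\theta+p)(e^{bt}-1)\bigr].
\end{align*}

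The two moment-type estimates then follow by elementary manipulations. First, $\sum_{\ell}(\theta+\ell)^2B_\ell(t)=\E[(\theta+N_t)^2]$, and the mean and variance of the negative binomial give the exact value $\theta e^{bt}\bigl((\theta+1)e^{bt}-1\bigr)$, whence the claimed bound. Second, to treat $\sum_\ell(\theta+\ell)^pA_\ell(t)$ I would use $(\theta+\ell)^p\le(\theta+\ell)_{(p)}=\tfrac1{p+1}\bigl[(\theta+\ell)_{(p+1)}-(\theta+\ell-1)_{(p+1)}\bigr]$ and then Abel-sum against $A_\ell(t)=\PP(N_t\ge\ell+1)$: the telescoping collapses $\sum_\ell\bigl[(\theta+\ell)_{(p+1)}-(\theta+\ell-1)_{(p+1)}\bigr]A_\ell(t)$ to $\E[(\theta-1+N_t)_{(p+1)}]-(\theta-1)_{(p+1)}$, and inserting the explicit moment above and using only the trivial inequality $\theta-1\le\theta+p$ yields exactly $\tfrac{e^{b(1+p)t}-1}{1+p}\prod_{i=0}^p(\theta+i)$.

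The tail bound for $A_\ell$ is the crux. I would apply a Chernoff bound: for $z\in[1,(1-e^{-bt})^{-1})$ one has $A_\ell(t)=\PP(N_t\ge\ell+1)\le z^{-(\ell+1)}G(z,t)$, and optimizing over $z$ turns the right-hand side into $\exp\!\bigl(-(\ell+1+\theta)\,h(\alpha\,\|\,e^{-bt})\bigr)$ with $\alpha:=\theta/(\ell+1+\theta)$ and $h(p\|q):=p\log\tfrac pq+(1-p)\log\tfrac{1-p}{1-q}$ the binary relative entropy; when $\alpha>e^{-bt}$ the optimal $z$ hits the constraint $z=1$ and the bound is the trivial $1$, which accounts for the positive part in the statement. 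A Pinsker-type quadratic lower bound on $h(\cdot\,\|\,\cdot)$ then produces the Gaussian-type decay in the claim. The delicate, purely computational point — and the step I expect to be the main obstacle — is extracting precisely the constants displayed (the prefactor $\ell+(a/b)+2$ together with the shift $\ell+(a/b)+1$), which requires a slightly sharpened form of Pinsker's inequality for $h$, or equivalently a careful direct estimate of the optimized Chernoff exponent rather than passing through the crude bound $h(p\|q)\ge2(p-q)^2$.
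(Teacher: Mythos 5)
Your probabilistic reading is correct and, for two of the three claims, essentially complete. The identification $A_\ell(t)=\PP(N_t\ge\ell+1)$, $B_\ell(t)=\PP(N_t=\ell)$, with $N_t$ negative binomial with parameters $\theta=a/b$ and success probability $e^{-bt}$, is valid and is just the other face of the paper's Beta-tail identity (Lemma \ref{le:Atransform-Beta}): $\PP(N_t\ge\ell+1)=\PP(\mathrm{Beta}(\theta,\ell+1)\ge e^{-bt})$. Your rising-factorial moments are right, the Abel/telescoping step for $\sum_\ell(\theta+\ell)^pA_\ell(t)$ goes through (the increments of $\ell\mapsto(\theta+\ell-1)_{(p+1)}$ are positive and all sums converge absolutely), and your exact value $\E[(\theta+N_t)^2]=\theta e^{bt}\bigl((\theta+1)e^{bt}-1\bigr)\le\theta(\theta+1)e^{2bt}$ is in fact the bound that Lemma \ref{le:AB-estimate} actually uses; the prefactor $a((a/b)+1)$ displayed in Proposition \ref{pr:ABestimate} (as opposed to $(a/b)((a/b)+1)$) is an inconsistency already present in the paper's own computation, so your ``whence the claimed bound'' is fine in substance even though it does not literally match the displayed constant when $b<1$.

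The genuine gap is the tail bound, and it is not ``purely computational.'' Your optimized Chernoff bound is exactly $\PP(N_t\ge m)\le\exp\bigl(-(\theta+m)\,h(\alpha\,\|\,p)\bigr)$ with $m=\ell+1$, $\alpha=\theta/(\theta+m)$, $p=e^{-bt}$. To reach the stated estimate you would need $(\theta+m)\,h(\alpha\,\|\,p)\ge 2(\theta+m+1)(p-\alpha)_+^2$, i.e.\ a Pinsker inequality with constant $2\bigl(1+\tfrac{1}{\theta+m}\bigr)$, and this is false: take $\theta=m$ (allowed, e.g.\ $a=b(\ell+1)$), so $\alpha=\tfrac12$ and $h(\tfrac12\,\|\,p)=-\tfrac12\log\bigl(4p(1-p)\bigr)=2(p-\tfrac12)^2+4(p-\tfrac12)^4+\cdots$, whence for $0<p-\alpha<(2(\theta+m))^{-1/2}$ the entire Chernoff exponent $(\theta+m)h(\alpha\,\|\,p)$ is strictly smaller than $2(\theta+m+1)(p-\alpha)^2$. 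So no sharpened Pinsker inequality and no more careful evaluation of the optimized exponent can help — the obstruction sits in the Chernoff/Cram\'er exponent itself, not in the quadratic lower bound for $h$. What your route does deliver (via plain Pinsker) is the prefactor $2(\theta+m)=2(\ell+(a/b)+1)$, which happens to be all that is used downstream in \eqref{est:Atail}, but it does not prove the proposition as stated. The paper obtains the extra $+1$ from a different input: the exact Beta representation of $A_\ell$ together with the Marchal--Arbel subgaussian bound $\log\E[e^{\lambda(Y-\E Y)}]\le\lambda^2/(8(\alpha+\beta+1))$ for $Y\sim\mathrm{Beta}(\alpha,\beta)$, a nontrivial cited result about the Beta moment generating function that carries strictly more information than the large-deviation bound for $N_t$. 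So either invoke that result (as the paper does) or weaken the tail estimate to the $(\ell+(a/b)+1)$ prefactor and note that this still suffices for Lemma \ref{le:AB-estimate}.
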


To prove this, we rewrite $A_\ell(t)$ as a tail probability involving a Beta distribution (Section \ref{se:transform-to-beta}) and exploit a subgaussian estimate (Section \ref{se:beta-subgaussian}). With these preparations, the remaining calculations are given in Section \ref{se:pf:ABest}.

\subsection{Transformation to a beta distribution} \label{se:transform-to-beta}

The starting point is the following observation. Abbreviate $c_j = a+bj$ for $j \ge 0$, and define the exponential density functions $h_j(t) := c_j e^{-c_jt}1_{[0,\infty)}(t)$. Then $A_\ell$ and $B_\ell$ can be expressed in terms of convolutions:
\begin{align*}
A_\ell(t) &= \int_0^t  h_0 * h_1 * \cdots  * h_\ell(s)\, ds, \qquad B_\ell(t) = \frac{1}{c_\ell} h_0 * h_1 * \cdots  * h_\ell(t).
\end{align*}
To express this in probabilistic terms, let $Z_j$ be an exponential random variable with parameter $c_j$, with $(Z_j)_{j \in \N}$ independent.
Then
\begin{align}
A_\ell(t) &= \PP\bigg( \sum_{j=0}^\ell Z_j \le t\bigg), \qquad B_\ell(t) = \frac{1}{a + b \ell}\frac{d}{dt}A_\ell(t). \label{eq:AB-exp-representation}
\end{align}
Recall in the following that for $\alpha,\beta > 0$ the distribution Beta($\alpha$,$\beta$) on $[0,1]$ is described by the density function
\begin{align*}
\frac{\Gamma(\alpha+\beta)}{\Gamma(\alpha)\Gamma(\beta)} x^{\alpha-1} (1-x)^{\beta-1}, \qquad x \in [0,1],
\end{align*}
where $\Gamma(z)=\int_0^\infty u^{z-1}e^{-u}\,du$ for $z > 0$ denotes the Gamma function.

\begin{lemma} \label{le:Atransform-Beta}
Let $t \ge 0$, and let $\ell \ge 0$ be an integer.
Let $Y \sim \mathrm{Beta}(a/b, \ell+1)$. Then $A_\ell(t) = \PP(Y \ge e^{-bt})$. That is,
\begin{align}
A_\ell(t) &= \frac{ \Gamma(\ell+(a/b)+1)}{\ell!\, \Gamma(a/b)} \int_{e^{-bt}}^1 x^{(a/b)-1} (1 - x)^\ell dx. \label{eq:A-beta-expr}
\end{align}
\end{lemma}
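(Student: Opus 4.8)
The plan is to work directly with the density of $\sum_{j=0}^\ell Z_j$ and identify it in closed form by induction on $\ell$. By \eqref{eq:AB-exp-representation} we have $A_\ell(t) = \PP\big(\sum_{j=0}^\ell Z_j \le t\big) = \int_0^t g_\ell(s)\,ds$, where $g_\ell = h_0 * \cdots * h_\ell$ is the convolution of the exponential densities $h_j(s) = (a+bj)e^{-(a+bj)s}1_{[0,\infty)}(s)$. The main step is to show that
\begin{align*}
g_\ell(s) = \frac{b\,\Gamma(\ell+(a/b)+1)}{\ell!\,\Gamma(a/b)}\, e^{-as}\,(1-e^{-bs})^\ell, \qquad s \ge 0.
\end{align*}
For the base case $\ell=0$ one has $g_0 = h_0$, and using $\Gamma(x+1)=x\Gamma(x)$ the right-hand side collapses to $a e^{-as}$, as required.

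For the inductive step I would write $g_\ell = g_{\ell-1} * h_\ell$, insert the inductive hypothesis, and pull the constants and the factor $e^{-(a+b\ell)s}$ out of the convolution integral; what remains is $\int_0^s e^{(a+b\ell)r}e^{-ar}(1-e^{-br})^{\ell-1}\,dr$. The key simplification is the algebraic identity $(1-e^{-br})^{\ell-1}e^{b\ell r} = e^{br}(e^{br}-1)^{\ell-1}$, after which the substitution $v = e^{br}-1$ turns the integral into $\tfrac{1}{b\ell}(e^{bs}-1)^\ell$. Combining this with $e^{-(a+b\ell)s}(e^{bs}-1)^\ell = e^{-as}(1-e^{-bs})^\ell$ and $(a+b\ell)\Gamma(\ell+(a/b)) = b\,\Gamma(\ell+(a/b)+1)$ recovers the claimed formula for $g_\ell$.

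With the density in hand I would integrate $A_\ell(t) = \int_0^t g_\ell(s)\,ds$ and perform the change of variables $x = e^{-bs}$ (so $ds = -dx/(bx)$ and $e^{-as} = x^{a/b}$), which produces exactly
\begin{align*}
A_\ell(t) = \frac{\Gamma(\ell+(a/b)+1)}{\ell!\,\Gamma(a/b)}\int_{e^{-bt}}^1 x^{(a/b)-1}(1-x)^\ell\,dx,
\end{align*}
establishing \eqref{eq:A-beta-expr}. Since $\Gamma(\ell+(a/b)+1)/(\ell!\,\Gamma(a/b)) = \Gamma((a/b)+(\ell+1))/(\Gamma(a/b)\Gamma(\ell+1))$ is precisely the normalizing constant of the $\mathrm{Beta}(a/b,\ell+1)$ density, the right-hand side is $\PP(Y \ge e^{-bt})$ for $Y \sim \mathrm{Beta}(a/b,\ell+1)$, which is the probabilistic form of the statement. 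There is no real obstacle here; the only non-mechanical point is spotting the identity $(1-e^{-br})^{\ell-1}e^{b\ell r} = e^{br}(e^{br}-1)^{\ell-1}$ that makes the convolution integral in the inductive step elementary. (Alternatively, one could verify the formula for $g_\ell$ — equivalently, identify $\prod_{j=0}^\ell e^{-bZ_j}$ as a $\mathrm{Beta}(a/b,\ell+1)$ law — by matching Mellin transforms with the classical product decomposition of a Beta distribution, but the direct induction is more self-contained.)
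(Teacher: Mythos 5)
Your proof is correct: the base case, the inductive convolution step (including the identity $(1-e^{-br})^{\ell-1}e^{b\ell r}=e^{br}(e^{br}-1)^{\ell-1}$ and the substitution $v=e^{br}-1$), the identity $(a+b\ell)\Gamma(\ell+(a/b))=b\,\Gamma(\ell+(a/b)+1)$, and the final change of variables $x=e^{-bs}$ all check out, and the constant is indeed the Beta$(a/b,\ell+1)$ normalization. The overall strategy is the same as the paper's — obtain the closed-form density $h_0*\cdots*h_\ell(s)=\frac{b\,\Gamma(\ell+(a/b)+1)}{\ell!\,\Gamma(a/b)}e^{-as}(1-e^{-bs})^\ell$ and then substitute $x=e^{-bs}$ — but you reach that density differently: the paper invokes a known partial-fraction formula for the convolution of exponentials with pairwise distinct rates (citing Akkouchi) and then collapses the resulting alternating sum via the binomial theorem, whereas you establish the density by a direct induction on $\ell$. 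Your route is more self-contained (no external convolution formula, no binomial identity) and does not even use distinctness of the rates $c_j=a+bj$, at the cost of being a step-by-step computation rather than a one-shot formula; either argument suffices for the lemma.
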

\begin{proof}
A computation detailed in \cite[Theorem 2.1]{akkouchi2008convolution} gives
\begin{align*}
h_0 * \cdots * h_\ell(t) &= \sum_{j=0}^\ell \frac{\prod_{i=0}^{\ell} c_i}{\prod_{i=0,\,i \neq j}^{\ell}(c_i-c_j)}e^{-c_j t}.
\end{align*}
Note for each $j$ that
\begin{align*}
\prod_{i=0,\,i \neq j}^{\ell}(c_i-c_j) &=  b^{\ell} \prod_{i=0,\,i \neq j}^{\ell} (i-j) = b^{\ell}(-1)^{j} j! (\ell-j)!,
\end{align*}
and the identity $\Gamma(z+1)=z\Gamma(z)$ yields
\begin{align*}
\prod_{i=0}^{\ell} c_i &= \prod_{i=0}^{\ell} (a+bi) = b^{\ell+1} \prod_{i=0}^{\ell} ( (a/b) + i) = b^{\ell+1} \frac{\Gamma(\ell+(a/b)+1)}{\Gamma(a/b)}.
\end{align*}
We thus find 
\begin{align*}
h_0 * \cdots * h_\ell(t) &= b \frac{ \Gamma(\ell+(a/b)+1)}{ \Gamma(a/b)}\sum_{j=0}^\ell \frac{1}{j! (\ell-j)!}  (-1)^{j} e^{-(a+bj) t} \\
	&= be^{-a t}\frac{ \Gamma(\ell+(a/b)+1)}{\ell!\, \Gamma(a/b)}\sum_{j=0}^\ell \binom{\ell}{j}  (-e^{-bt})^{j} \\
	&= b e^{-a t}\frac{ \Gamma(\ell+(a/b)+1)}{\ell!\, \Gamma(a/b)}(1-e^{-bt})^{\ell} .
\end{align*}
Changing variables via $x = e^{- b s}$,  
\begin{align*}
A_\ell(t) &= \int_0^t h_0 * \cdots * h_\ell(s) ds \\
	&= \frac{ \Gamma(\ell+(a/b)+1)}{\ell!\, \Gamma(a/b)} \int_0^t  b(e^{- bs})^{a/b} (1 - e^{-bs})^\ell ds \\
	&= \frac{ \Gamma(\ell+(a/b)+1)}{\ell!\, \Gamma(a/b)} \int_{e^{-bt}}^1 x^{(a/b)-1} (1 - x)^\ell dx.
\end{align*}
{\ } \vskip-.8cm
\end{proof}

\begin{remark}
In the case where $a/b=k$ is a positive integer, so that $c_j=b(k+j)$, an alternative proof of Lemma \ref{le:Atransform-Beta} is available using \emph{R\'enyi's representation} of exponential order statistics \cite{renyi1953theory}. Note that $c_j=b(j+k)$, so that $E_{j+k}:=b(j+k)Z_j$ are i.i.d.\ $\sim$ Exp(1). We deduce from \eqref{eq:AB-exp-representation} that $A_\ell(\cdot)$ is the cumulative distribution function of $\frac{1}{b} \sum_{j=k}^{\ell+k} \frac{E_j}{j}$. R\'enyi's result implies that $\sum_{j=k}^{\ell+k} \frac{E_j}{j}$ has the same law as the $k^\text{th}$ largest order statistic of a sample of $\ell+k$ independent Exp(1)'s. 
By a simple transformation, it follows that $A_\ell(t) = \PP(U_{(\ell+k,k)} \ge e^{-b t})$, where $U_{(\ell+k,k)}$ is the $k^\text{th}$ smallest order statistic of $\ell+k$ independent uniforms. Finally, it is well known that $U_{(\ell+k,k)} \sim$ Beta($k$,$\ell+1$).
\end{remark}

\subsection{A subgaussian estimate for the Beta distribution} \label{se:beta-subgaussian}

We next state a result from \cite{marchal2017sub}, which gives a tractable bound for the subgaussian constant of the Beta distribution, as well as the exact but less explicit value which we will not need.

\begin{lemma} \cite[Theorem 2.1]{marchal2017sub} \label{le:beta-tailbound}
Let $Y \sim \mathrm{Beta(}\alpha,\beta\mathrm{)}$. Then, for all $\lambda \in \R$,
\begin{align*}
\log\E\big[e^{\lambda(Y-\E[Y])}\big] &\le \frac{\lambda^2}{8(\alpha+\beta+1)}.
\end{align*}
Moreover, for all $t \in \R$,
\begin{align*}
\PP(Y > t) &\le \exp\left(- 2(\alpha+\beta+1)\left(t - \frac{\alpha}{\alpha+\beta}\right)_+^2\right).
\end{align*}
\end{lemma}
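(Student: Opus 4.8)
The plan is to prove the two displays in turn. For the moment generating function (MGF) bound, I would follow the standard route to subgaussian estimates: show that the function $\psi(\lambda) := \log \E[e^{\lambda(Y-\E[Y])}]$ satisfies $\psi(0)=\psi'(0)=0$ and $\psi''(\lambda) \le 1/(4(\alpha+\beta+1))$ for all $\lambda$, so that Taylor's theorem with integral remainder gives $\psi(\lambda) \le \lambda^2/(8(\alpha+\beta+1))$. Here $\psi''(\lambda) = \Var_{\lambda}(Y)$, the variance of $Y$ under the exponentially tilted measure $d\PP_\lambda \propto e^{\lambda y}\,d\PP$, which is again a distribution supported on $[0,1]$ but with a reweighted Beta density. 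The key input is therefore a \emph{uniform} bound on the variance of such tilted Beta laws: one shows that for every tilt the variance is at most $1/(4(\alpha+\beta+1))$. The cleanest way I know is to compare with the variance of the untilted Beta, $\Var(Y) = \alpha\beta/((\alpha+\beta)^2(\alpha+\beta+1)) \le 1/(4(\alpha+\beta+1))$ (the last step by AM–GM on $\alpha\beta \le (\alpha+\beta)^2/4$), and then argue that the exponential tilt does not increase the variance. This monotonicity-under-tilting is precisely the content of \cite[Theorem 2.1]{marchal2017sub}; since the lemma is quoted verbatim from that reference I would simply cite it, but if a self-contained argument is wanted, the variance-under-tilt bound is the one technical fact to establish, e.g.\ via a log-concavity/Prékopa-type argument or a direct differential inequality $\tfrac{d}{d\lambda}\Var_\lambda(Y) = \E_\lambda[(Y-\E_\lambda Y)^3]$ together with a bound on the third central moment.

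For the tail bound, I would use the standard Chernoff/exponential Markov argument on top of the MGF estimate. For $t \le \alpha/(\alpha+\beta) = \E[Y]$ the right-hand side equals $1$ and there is nothing to prove, so assume $t > \E[Y]$. Then for any $\lambda > 0$,
\begin{align*}
\PP(Y > t) \le e^{-\lambda(t-\E[Y])}\,\E[e^{\lambda(Y-\E[Y])}] \le \exp\!\Big(-\lambda(t-\E[Y]) + \frac{\lambda^2}{8(\alpha+\beta+1)}\Big).
\end{align*}
Optimizing over $\lambda$ by choosing $\lambda = 4(\alpha+\beta+1)(t-\E[Y])$ yields the exponent $-2(\alpha+\beta+1)(t-\E[Y])^2$, which is exactly the claimed bound once one writes $(t-\E[Y])_+^2$ to absorb the trivial regime.

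I expect the only real obstacle to be the uniform variance bound under exponential tilting needed for the $\psi'' $ estimate; everything else (the Taylor remainder step and the Chernoff optimization) is routine. Since the statement is explicitly imported from \cite{marchal2017sub}, the honest and economical choice in the paper is to invoke that theorem directly rather than reproduce its proof, and I would do so, noting only the Chernoff step as the passage from the MGF bound to the tail bound.
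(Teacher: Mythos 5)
Your proposal matches the paper's proof: the first display is simply quoted from \cite[Theorem 2.1]{marchal2017sub}, and the second follows by the standard Chernoff bound (trivial for $t \le \E[Y]$, optimizing $\lambda = 4(\alpha+\beta+1)(t-\E[Y])$ otherwise), exactly as you describe. The extra sketch of how one might reprove the MGF bound via tilted variances is unnecessary but harmless, since you correctly conclude that citing the reference is the right move.
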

\begin{proof}
The first claim is due to \cite[Theorem 2.1]{marchal2017sub}. The second claim is trivial for $t \le \frac{\alpha}{\alpha+\beta} = \E[Y]$, and for $t > \E[Y]$ it follows from the first claim by a standard Chernoff bound.
\end{proof}

\subsection{Proof of Proposition \ref{pr:ABestimate}} \label{se:pf:ABest}

Combine Lemma \ref{le:Atransform-Beta} and the estimate of Lemma \ref{le:beta-tailbound} to get the first claim of Proposition \ref{pr:ABestimate}.
To estimate the summations, we will compute exactly the quantity
\begin{align*}
\sum_{\ell=0}^\infty \left(\prod_{i=1}^p((a/b)+\ell+i) \right) A_\ell(t).
\end{align*}
To this end, we first note two computations that will be useful along the way.
First,  the identity $\Gamma(z+1)=z\Gamma(z)$ implies
\begin{align}
\left(\prod_{i=1}^p(\ell+(a/b)+i) \right)\Gamma(\ell+(a/b)+1) &= \Gamma(\ell+(a/b)+p+1). \label{pf:gammaidentity2}
\end{align}
For $r \ge 0$ and $0 \le y < 1$, use the definition of the Gamma function and Fubini's theorem to get a second useful identity,
\begin{align}
\sum_{\ell=0}^\infty \frac{\Gamma(\ell+r+1)}{\ell!}y^\ell &= \int_0^\infty  \sum_{\ell=0}^\infty \frac{1}{\ell!} u^{\ell+r}e^{-u}y^\ell \, du \nonumber \\
	&= \int_0^\infty u^{r} e^{-(1-y)u} \, du \nonumber \\
	&= (1-y)^{-(1+r)}\int_0^\infty u^{r} e^{-u} \, du \nonumber \\
	&= (1-y)^{-(1+r)}\Gamma(r+1). \label{pf:gammaidentity1}
\end{align}

Now use Fubini's theorem along with the formulas \eqref{eq:A-beta-expr}, \eqref{pf:gammaidentity2}, and then \eqref{pf:gammaidentity1} with $r=p+(a/b)$ and $y=1-x$:
\begin{align*}
\sum_{\ell=0}^\infty  &\left(\prod_{i=1}^p(\ell+(a/b)+i) \right) A_\ell(t) \\
	&= \sum_{\ell=0}^\infty \left(\prod_{i=1}^p(\ell+(a/b)+i) \right) \frac{ \Gamma(\ell+(a/b)+1)}{\ell! \Gamma(a/b)} \int_{e^{-bt}}^1 x^{(a/b)-1} (1 - x)^\ell \,dx \\
	&= \sum_{\ell=0}^\infty \frac{ \Gamma(\ell+(a/b)+p+1)}{\ell! \Gamma(a/b)} \int_{e^{-bt}}^1 x^{(a/b)-1} (1 - x)^\ell \,dx \\
	&= \frac{\Gamma(1+p+(a/b))}{\Gamma(a/b)}\int_{e^{-bt}}^1 x^{(a/b)-1} x^{-(1+ p +(a/b))}\, dx \\
	&= \frac{\Gamma(1+p+(a/b))}{\Gamma(a/b)}\frac{e^{b(1+p)t} - 1}{1+p}.
\end{align*}
This immediately implies
\begin{align*}
\sum_{\ell=0}^\infty ((a/b)+\ell)^p A_\ell(t) &\le \frac{\Gamma(1+p+(a/b))}{\Gamma(a/b)}\frac{e^{b(1+p)t} - 1}{1+p} .
\end{align*}

We can perform similar computations with $B$ in place of $A$, by recalling that $B_\ell(t) = \frac{1}{a + b \ell}\frac{d}{dt}A_\ell(t)$. Applying the above, we get
\begin{align*}
\sum_{\ell=0}^\infty (a+b\ell)^{1+p} B_\ell(t) &\le \sum_{\ell=0}^\infty \left(\prod_{i=0}^{p}(\ell+(a/b)+i) \right) B_\ell(t) \\
	&= \frac{d}{dt}\sum_{\ell=0}^\infty \left(\prod_{i=1}^{p}(\ell+(a/b)+i) \right) A_\ell(t) \\
	&= \frac{d}{dt}\left(\frac{\Gamma(1+p+(a/b))}{\Gamma(a/b)}\frac{e^{b(1+p)t} - 1}{1+p}\right) \\
	&= \frac{\Gamma(1+p+(a/b))}{\Gamma(a/b)} be^{b(1+p)t}.
\end{align*}
To complete the proof, simplify the previous estimates by using the identity
\begin{align*}
\frac{\Gamma(1+p+(a/b))}{\Gamma(a/b)} &= \prod_{i=0}^p ((a/b)+i).    
\end{align*}

\section{Proofs of corollaries for pairwise interactions} \label{se:proofs:corollaries}

This section gives the proofs of Corollaries \ref{co:bounded} and \ref{co:Lipschitz-dyn} and Theorem \ref{th:sublinear}.
These all follow fairly quickly from Theorem \ref{th:intro-dynamic}, except for the claims about well-posedness of the McKean-Vlasov equations, deferred to Section \ref{se:wellposedness}.

We will make some use of the weighted Pinsker inequality of \cite[Theorem 2.1(ii)]{bolleyvillani}, both in this section and in Appendix \ref{ap:transp-proof}. It states that, for probability measures $\nu$ and $\nu'$ on a common measurable space, and for a measurable real-valued function $f$ thereon, 
\begin{align*}
\langle \nu-\nu',f\rangle^2 \le 2\left(1 + \log \int e^{f^2}\,d\nu'\right) H(\nu \,|\,\nu' ). 
\end{align*}
This extends to $\R^d$-valued functions $f$ via the same argument as in \eqref{ineq:Pinsker}:
\begin{align}
|\langle \nu-\nu',f\rangle|^2 \le 2\left(1 + \log \int e^{|f|^2}\,d\nu'\right) H(\nu \,|\,\nu' ). \label{ineq:weightedPinsker}
\end{align}

\subsection{Proof of Corollary \ref{co:bounded}} \label{se:proofs:bounded}

We first check the claimed well-posedness of \eqref{def:introSDE-nonMarkov} and \eqref{def:introMV-nonMarkov}.  A standard argument using Girsanov's theorem and boundedness of $b$ shows that the well-posedness of \eqref{def:introSDE-nonMarkov} is equivalent to that of the SDE($b_0^{\otimes n}$), which holds by assumption. Existence and uniqueness for the McKean-Vlasov equation \eqref{def:introMV-nonMarkov} follow from Proposition \ref{pr:MVwellposed-bounded}.

Corollary \ref{co:bounded} will follow from Theorem \ref{th:intro-dynamic} as soon as we check the assumptions (1--3) therein. The well-posedness assumption (1) follows from the boundedness of $b$ and the assumed well-posedness of the SDE($b_0^{\otimes k}$) for each $k$. Clearly the square-integrability assumption (2) holds with $M \le 2\||b|^2\|_\infty$. Finally, the transport inequality (3) holds with $\transpconst = 2\||b|^2\|_\infty$ by Pinsker's inequality \eqref{ineq:Pinsker}. \hfill \qedsymbol

\subsection{Proof of Corollary \ref{co:Lipschitz-dyn}}  \label{se:proofs:Lipschitz}

The well-posedness of the SDE \eqref{def:introSDE-nonMarkov} and the  McKean-Vlasov SDE \eqref{def:introMV-nonMarkov} are both classical under the Lipschitz assumption; see \cite[Theorem 1.1]{sznitman1991topics} for the latter, noting that the proof adapts without change to the path-dependent setting.
The Lipschitz assumption \eqref{asmp:Lipschitz-dyn} also implies that assumption (1) of Theorem \ref{th:intro-dynamic} holds, as the function $\overline{b}(t,x)$ defined therein is Lipschitz in $x$  (and thus so is $\overline{b}^{\otimes k}(t,x)$ for each $k$).

It remains to check assumptions (2) and (3) of Theorem \ref{th:intro-dynamic}.
First, the facts that $\overline{b}$ is $L$-Lipschitz and $\mu_0$ satisfies the quadratic transport inequality \eqref{def:dyn-init-T2H} together imply that $\mu$ itself satisfies a quadratic transport inequality, as is proven in Proposition \ref{pr:T2-randominit}. Precisely, we have
\begin{align}
\W_2^2(\nu,\mu) \le \eta H(\nu\,|\,\mu), \qquad \forall \nu \in \P(\C_T^d), \label{pf:dyn-T2H-1}
\end{align}
where $\eta := 3(\eta_0 \vee 2T)e^{3TL^2}$.
This lets us estimate the constant $\transpconst$. Indeed, since $b(t,x,\cdot)$ is $L$-Lipschitz, so is $u \cdot b(t,x,\cdot)$ for each unit vector $u \in \R^d$. The inequality $\W_1 \le \W_2$ combined with Kantorovich duality then implies that, for $\nu \in \P(\C_T^d)$,
\begin{align*}
\big|\langle \mu-\nu, b(t,x,\cdot)\rangle\big|^2 &= \sup_{u \in \R^d, \, |u|=1}\langle \mu-\nu, u \cdot b(t,x,\cdot)\rangle^2 \le L^2 \eta H(\nu\,|\,\mu).
\end{align*}
Hence, we may take $\transpconst = L^2 \eta$.

Next, we estimate the constant $M$. Let $(X^1,\ldots,X^n)$ denote the solution of the SDE \eqref{def:introSDE-nonMarkov}.
A classical argument yields a second moment bound: Start with
\begin{align*}
|X^i_t| &\le |X^i_0| + \int_0^t \bigg|b_0(s,X^i) + \frac{1}{n-1}\sum_{j \neq i} b(s,X^i,X^j)\bigg| \, ds +  |W^i_t| \\
	&\le |X^i_0| + L\int_0^t\bigg(\|X^i\|_s + \frac{1}{n-1}\sum_{j \neq i}\|X^j\|_s\bigg)ds + M_0 + |W^i_t|.
\end{align*}
Square both sides and average to get
\begin{align*}
\frac{1}{n}\sum_{i=1}^n \|X^i\|_t^2 \le 4|X^i_0|^2 + 16L^2T \int_0^t \frac{1}{n}\sum_{i=1}^n \|X^i\|_s^2\,ds + 4M_0^2 + 4 \frac{1}{n}\sum_{i=1}^n \|W^i\|_t^2.
\end{align*}
Apply Gronwall's inequality and take expectations to get
\begin{align*}
\E[\|X^1\|_t^2] &= \E\left[\frac{1}{n}\sum_{i=1}^n \|X^i\|_t^2\right] \le 4e^{16L^2T}\left( \E|X^1_0|^2 + M_0^2 + 4d T \right),
\end{align*}
where we used Doob's inequality to get $\E \|W^1\|_t^2 \le 4\E |W^1_T|^2 = 4dT$.
A similar argument for the McKean-Vlasov equation yields
\begin{align*}
\int_{\C_T^d} \|x\|_t^2\,\mu(dx) &\le 4e^{16L^2T}\left( \int_{\R^d} |x_0|^2\,\mu_0(dx) + M_0^2 + 4d T \right).
\end{align*}
Finally, note for all $(t,x,y)$ that
\begin{align*}
\big|b(t,x,y) - \langle \mu,b(t,x,\cdot)\rangle\big| &\le \int_{\C^d_T} | b(t,x,y)-b(t,x,y')|\,\mu(dy') \le L\int_{\C^d_T}  \|y-y'\|_t \,\mu(dy'),
\end{align*}
which implies
\begin{align*}
M &\le L^2\int_{\C^d_T}\int_{\C^d_T} \|y-y'\|_T^2 \,P^{(1)}(dy)\,\mu(dy') \\
	&\le 2L^2 \bigg(\int_{\C_T^d} \|x\|_t^2\,\mu(dx) + \E\|X^1\|_T^2\bigg) \\
	&\le 8L^2e^{16TL^2}\bigg( \int_{\R^d}|x|^2\,\mu_0(dx) + \E|X^1_0|^2 + 2M_0^2 + 8dT\bigg).
\end{align*}
With the constants $\transpconst$ and $M$ estimated, we may now apply Theorem \ref{th:intro-dynamic} to get the second claimed inequality. To get the first, we  simply use the fact that the transport inequality \eqref{pf:dyn-T2H-1} tensorizes (see \cite[Proposition 1.9]{gozlan-leonard}), in the sense that 
\begin{align*}
\W_2^2(\nu,\mu^{\otimes k}) \le \eta H(\nu\,|\,\mu^{\otimes k}), \qquad \forall \nu \in \P((\C^d_T)^k),
\end{align*}
and note that $\eta=\transpconst/L^2$. \hfill\qedsymbol

\subsection{Proof of Theorem \ref{th:sublinear}} \label{se:proofs:sublinear}

We first check the claimed well-posedness of \eqref{def:introSDE-nonMarkov} and \eqref{def:introMV-nonMarkov}. Existence for \eqref{def:introSDE-nonMarkov} follows from linear growth and Girsanov's theorem \cite[3.5.16]{karatzas-shreve}. Uniqueness also follows from Girsanov, e.g., using \cite[Theorem 7.7]{liptser-shiryaev}.
Well-posedness of the McKean-Vlasov equation follows from Proposition \ref{pr:MVwellposed-sublinear}. Moreover, Proposition \ref{pr:MVwellposed-sublinear} also ensures that condition (3) of Theorem \ref{th:intro-dynamic}  holds.

The entropy estimate of Theorem \ref{th:sublinear} will follow from Theorem \ref{th:intro-dynamic} as soon as we check the assumptions (1) and (2) therein. As above, the linear growth of $b$ yields the well-posedness assumption (1). We next check condition (2). The standard argument using linear growth and Gronwall's inequality, as in the proof of Corollary \ref{co:Lipschitz-dyn}, yields
\begin{align*}
\int_{\C_T^d} \|x\|_T^2\,P^{(1)}(dx) &\le 4e^{16K^2T}\left(\int_{\R^d}|x|^2\,P^{(1)}_0(dx) + K^2T^2 + 4dT\right), \\
\int_{\C_T^d} \|x\|_T^2\,\mu(dx) &\le 4e^{16K^2T}\left(\int_{\R^d}|x|^2\mu_0(dx) + K^2T^2 + 4dT\right).
\end{align*}
Next, note for all $(t,x,y)$ that 
\begin{align}
\big|b(t,x,y) - \langle \mu,b(t,x,\cdot)\rangle\big| &\le \int_{\C^d_T} | b(t,x,y)-b(t,x,y')|\,\mu(dy') \nonumber \\
	&\le K\bigg(1 + \|y\|_T + \int_{\C^d_T}  \|y'\|_T\, \mu(dy')\bigg), \label{pf:sublinear2}
\end{align}
which implies
\begin{align*}
M &\le 3K^2\bigg(1 + \int_{\C^d_T}  \|y\|_T^2 \, P^{(1)}(dy) + \int_{\C^d_T}  \|y\|_T^2 \, \mu(dy)\bigg) \\
	&\le 12K^2e^{16K^2T}\bigg( 1 +  \int_{\R^d}|x|^2\,P^{(1)}_0(dx) + \int_{\R^d}|x|^2\,\mu_0(dx) + 2K^2T^2 + 8dT\bigg).
\end{align*}
This yields condition (2) of Theorem \ref{th:intro-dynamic}, which we may now apply to yield the claimed upper bound on $H(P^{(k)}\,|\,\mu^{\otimes k})$.

Lastly, we prove the inequality $\W_1^2(P^{(k)},\mu^{\otimes k}) \le Ck H(P^{(k)}\,|\,\mu^{\otimes k})$. Note first that \eqref{ineq:expmoment:sublin} from Proposition \ref{pr:MVwellposed-sublinear} is the well known integral criterion (see \cite[Theorem 2.3]{djellout2004transportation} or \cite[Proposition 6.3]{gozlan-leonard}) which implies that there exists $C > 0$ such that
\begin{align*}
\W_1^2(\nu,\mu) &\le C H(\nu\,|\,\mu), \qquad \text{for all } \nu \in \P(\C_T^d).
\end{align*}
This inequality tensorizes (see \cite[Proposition 1.9]{gozlan-leonard}) to yield
\begin{align*}
\W_{1,\ell_1}^2(\nu,\mu^{\otimes k}) &\le C k H(\nu\,|\,\mu^{\otimes k}), \qquad \text{for all } k \in \N, \ \ \nu \in \P((\C_T^d)^k),
\end{align*}
where $\W_{1,\ell_1}$ is the Wasserstein distance on $(\C_T^d)^k$ defined using the $\ell_1$-norm, as in Remark \ref{re:transport-rate}. Recalling that $\W_1$ is defined in terms of the usual ($\ell_2$) norm of $(\C_T^d)^k\cong \C_T^{dk}$, which is bounded from above by the $\ell_1$-norm, this completes the proof.\hfill \qedsymbol

\section{Some well-posedness results for McKean-Vlasov equations} \label{se:wellposedness}

This section proves some results on well-posedness of McKean-Vlasov equations with irregular coefficients, used in Section \ref{se:proofs:corollaries}.
There has been significant recent efforts in this direction, such as \cite{mishura2016existence,rockner2018well}, though none of them appear to cover exactly our results below. 

The first result, Proposition \ref{pr:MVwellposed-bounded}, is nearly covered by existing results, with the exception that $b_0$ is not required to be bounded. Our proof is inspired by the approach of \cite[Theorem 2.4]{lacker2018strong}.
As usual, we assume implicitly that $b_0$ and $b$ are progressively measurable.

\begin{proposition} \label{pr:MVwellposed-bounded}
Suppose the SDE$(b_0)$ is well-posed in the sense of Definition \ref{def:SDEwellposed} and $b$ is bounded. 
Then the McKean-Vlasov SDE \eqref{def:introMV-nonMarkov} admits a unique in law weak solution from any initial distribution.
\end{proposition}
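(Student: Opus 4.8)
The plan is to recast the McKean-Vlasov equation as a fixed point problem on the space of laws on path space, and to use the boundedness of $b$ together with Girsanov's theorem to linearize. Given a candidate flow $\nu \in \P(\C_T^d)$ with the correct initial marginal $\nu_0 = \mu_0$, consider the drift $\overline{b}^\nu(t,x) := b_0(t,x) + \langle \nu, b(t,x,\cdot)\rangle$. Since $b$ is bounded, $\langle \nu, b(t,x,\cdot)\rangle$ is a bounded progressively measurable perturbation of $b_0$, so by a standard Girsanov argument (exactly as invoked in the proof of Corollary \ref{co:bounded}) the well-posedness of SDE$(b_0)$ transfers to well-posedness of SDE$(\overline{b}^\nu)$; let $\Phi(\nu) \in \P(\C_T^d)$ denote the law of its unique weak solution started from $\mu_0$. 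A weak solution of \eqref{def:introMV-nonMarkov} with the given initial law is precisely a fixed point of $\Phi$. Existence of a fixed point will follow from a compactness/continuity argument: the family $\{\Phi(\nu)\}$ is tight (bounded drift plus Brownian motion, with fixed initial law $\mu_0$), $\Phi$ is continuous for weak convergence because $\langle \nu_n, b(t,x,\cdot)\rangle \to \langle \nu, b(t,x,\cdot)\rangle$ when $b$ is bounded (handling the fact that $b$ need only be measurable, not continuous, via the Girsanov/density representation rather than pointwise convergence of drifts — see below), and Schauder's fixed point theorem applies on a suitable convex compact set of measures.

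For uniqueness, suppose $\nu^1, \nu^2$ are two solutions. Both have initial law $\mu_0$, and both are equivalent in law (on path space) to the law $P^0$ of the SDE$(b_0)$ solution from $\mu_0$, by Girsanov, since $\langle \nu^i, b(t,x,\cdot)\rangle$ is bounded. Write the relative entropy or, better, use the explicit Girsanov density: $d\nu^i/dP^0$ is an exponential martingale driven by $\langle \nu^i, b(t,X,\cdot)\rangle$. One then estimates the total variation (or $\W_1$-type) distance between $\nu^1_t$ and $\nu^2_t$ via a Gronwall argument: $\|\nu^1_t - \nu^2_t\|_{\mathrm{TV}}$ is controlled by $\int_0^t \|\nu^1_s - \nu^2_s\|_{\mathrm{TV}}\,ds$ with a constant depending on $\|b\|_\infty$, using the Girsanov density comparison and the boundedness of $b$ to bound the difference of the two exponential martingales. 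Gronwall then forces $\nu^1 = \nu^2$. This is the strategy of \cite[Theorem 2.4]{lacker2018strong} referenced in the text.

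The main obstacle is handling the irregularity of $b$: since $b$ is only bounded and measurable (not continuous), I cannot directly argue that $\overline{b}^{\nu_n} \to \overline{b}^\nu$ pointwise implies convergence of the solution laws by classical SDE stability. Instead, the argument must route through Girsanov: the law $\Phi(\nu)$ is absolutely continuous with respect to $P^0$ with an explicit density depending on $\nu$ only through the bounded functional $\nu \mapsto \langle \nu, b(t,x,\cdot)\rangle$, and continuity/estimates are obtained at the level of these densities, where boundedness of $b$ gives uniform integrability of the relevant exponential martingales. A secondary, more routine point is that $b_0$ is unbounded, so one cannot start the whole analysis from Wiener measure; the fix is to take $P^0$ (the law of the SDE$(b_0)$ solution, which exists and is unique by hypothesis) as the reference measure throughout, and only the \emph{bounded} part $\langle \nu, b(t,x,\cdot)\rangle$ enters the Girsanov change of measure. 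With $P^0$ as the pivot, tightness, the Schauder fixed-point step, and the Gronwall uniqueness estimate all go through as sketched.
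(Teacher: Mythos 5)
Your setup (the map $\Phi(\nu)$ defined via the SDE with drift $b_0 + \langle \nu, b(t,\cdot,\cdot)\rangle$, well-posed by Girsanov since $b$ is bounded) and your uniqueness argument (Girsanov density comparison plus Gronwall in total variation) match the paper's strategy. The gap is in the existence step. Schauder requires continuity of $\Phi$ with respect to a topology in which your domain is convex and compact, and you propose the weak topology; but for $b$ merely bounded and \emph{measurable}, $\nu \mapsto \langle \nu, b(t,x,\cdot)\rangle$ is not weakly continuous, so the Girsanov densities $Z^{\nu_n}$ need not converge when $\nu_n \to \nu$ weakly — routing the argument ``through the densities'' does not repair this, since the densities depend on $\nu$ exactly through these integrals. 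If you instead switch to the total variation topology, where continuity of $\Phi$ does hold (because $|\langle \nu - \nu', b(t,x,\cdot)\rangle| \le \|b\|_\infty \|\nu - \nu'\|_{\mathrm{TV}}$), you lose compactness: uniform $L^2(P^0)$ bounds on the densities give weak $L^1$ (Dunford–Pettis) relative compactness, not norm compactness, so Schauder does not apply there either. As stated, the existence proof does not close.

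The fix is already contained in your uniqueness computation, and it is what the paper does: the Girsanov/entropy estimate applies to \emph{arbitrary} inputs $\mu,\nu$, not just to two fixed points, and (via Lemma \ref{le:entropy-diffusions} and Pinsker) yields
\begin{align*}
\|\Phi(\mu)[t] - \Phi(\nu)[t]\|_{\mathrm{TV}}^2 \;\le\; \||b|^2\|_\infty \int_0^t \|\mu[s]-\nu[s]\|_{\mathrm{TV}}^2 \, ds .
\end{align*}
Iterating this (standard Picard iteration, with the $(\||b|^2\|_\infty t)^m/m!$ gain) produces a Cauchy sequence in total variation whose limit is a fixed point, and simultaneously gives uniqueness. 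So no compactness or Schauder argument is needed; the contraction-type estimate you derived for uniqueness is the whole proof.
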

\begin{proof}
Fix an initial law $\mu_0 \in \P(\R^d)$.
For $\mu \in \P(\C_T^d)$ define $\overline{b}_\mu(t,x) = b_0(t,x) + \langle \mu,b(t,x,\cdot)\rangle$. A standard argument using Girsanov's theorem shows that the SDE$(\overline{b}_\mu)$ is well-posed for each $\mu$, because the SDE$(b_0)$ is. Let $\Phi(\mu) \in \P(\C_T^d)$ denote the law of the unique solution of the SDE$(\overline{b}_\mu)$, started from $\mu_0$.
To prove the claim, we must show that $\Phi$ has a unique fixed point.
Using Pinsker's inequality \eqref{ineq:Pinsker} and Lemma \ref{le:entropy-diffusions}, for $t \in [0,T]$ we have
\begin{align*}
\|\Phi(\mu)[t] - \Phi(\nu)[t]\|_{\mathrm{TV}}^2 &\le 2H(\Phi(\mu)[t]\,|\,\Phi(\nu)[t]) \\
	&= \int_{\C_t^d}\int_0^t \left|\overline{b}_\mu(s,x) - \overline{b}_\nu(s,x)\right|^2\,ds\,\mu[t](dx) \\
	&= \int_{\C_t^d}\int_0^t \left|\langle \mu[s] - \nu[s],b(s,x,\cdot)\rangle\right|^2\,ds\,\mu[t](dx) \\
	&\le \||b|^2\|_\infty \int_0^t \|\mu[s] - \nu[s]\|_{\mathrm{TV}}^2\,ds.
\end{align*}
A standard Picard iteration completes the proof.
\end{proof}

The next result, tailored to the setting of Theorem \ref{th:sublinear}, is somewhat more novel. The closest prior result seems to be \cite[Theorem 2]{mishura2016existence}, which requires the stronger uniform linear growth condition $\sup_{t,y}|b(t,x,y)| \le c(1+|x|)$.

\begin{proposition} \label{pr:MVwellposed-sublinear}
Suppose $(b_0,b)$ satisfy the assumption \eqref{condition:lingrowth} of Theorem \ref{th:sublinear}.
Let $\mu_0 \in \P(\R^d)$ satisfy $\int_{\R^d} e^{c_0|x|^2}\mu_0(dx) < \infty$ for some $c_0 > 0$.
Then the McKean-Vlasov SDE \eqref{def:introMV-nonMarkov} admits a unique in law weak solution with initial distribution $\mu_0$.
Moreover, there exist $c,\transpconst > 0$ such that the solution $\mu \in \P(\C_T^d)$ satisfies
\begin{align}
\int_{\C_T^d} e^{c\|x\|_T^2}\mu(dx) &< \infty, \quad \text{and}  \label{ineq:expmoment:sublin} \\
\sup_{(t,x) \in (0,T) \times \C_T^d}|\langle \mu - \nu, b(t,x,\cdot)\rangle|^2 &\le  \transpconst H(\nu\,|\,\mu), \ \ \forall \nu \in \P(\C_T^d). \label{ineq:transp:sublin}
\end{align}
\end{proposition}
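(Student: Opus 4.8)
\emph{Plan and setup.} The plan is to study the fixed-point map $\Phi$ which sends a probability measure $\mu \in \P(\C_T^d)$ of finite first moment to the law, started from $\mu_0$, of the unique weak solution of the SDE with drift $\overline{b}_\mu(t,x) := b_0(t,x) + \langle \mu, b(t,x,\cdot)\rangle$; a solution of the McKean-Vlasov equation \eqref{def:introMV-nonMarkov} is exactly a fixed point of $\Phi$. From \eqref{condition:lingrowth}, evaluating the first bound at the zero path $y \equiv 0$ gives $|b_0(t,x)| \le K(1+\|x\|_t)$, so $\overline{b}_\mu$ is progressively measurable with linear growth $|\overline{b}_\mu(t,x)| \le 2K(1+\|x\|_t) + K\langle\mu,\|\cdot\|_t\rangle$; hence SDE$(\overline{b}_\mu)$ admits a unique in law weak solution by the same Girsanov arguments used for \eqref{def:introSDE-nonMarkov} (linear growth for existence, cf.\ \cite[3.5.16]{karatzas-shreve}; uniqueness as in \cite[Theorem 7.7]{liptser-shiryaev}), so $\Phi$ is well defined, and moreover SDE$(b_0)$ itself is well posed.

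\emph{A priori estimates, and \eqref{ineq:expmoment:sublin}--\eqref{ineq:transp:sublin}.} Next I would derive the a priori bounds, which hold for \emph{any} solution. If $\mu = \Phi(\mu)$ with solution process $X$, then with $X_t^* := \|X\|_t$ and using $\langle\mu,\|\cdot\|_s\rangle = \E[X_s^*]$, a pathwise estimate for $X_t^*$ followed by taking expectations and Gronwall's inequality gives $\E[X_T^*] \le m_0$ for an explicit $m_0$ depending only on $(K,T,d)$ and $\int |x|\,\mu_0(dx)$. Feeding this back in, the pathwise Gronwall inequality yields $X_T^* \le e^{2KT}\big(|X_0| + (2K + Km_0)T + \|W\|_T\big)$; since $|X_0|$ has a Gaussian exponential moment by \eqref{def:dyn-expinteg-time0}, $\|W\|_T$ does too (maximal inequality), and they are independent, this gives \eqref{ineq:expmoment:sublin} with some $c>0$ depending only on the data, uniformly over all solutions. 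For \eqref{ineq:transp:sublin}, the second line of \eqref{condition:lingrowth} (crucially, with no $\|x\|_t$ term) yields $|b(t,x,y) - \langle\mu,b(t,x,\cdot)\rangle| \le \langle\mu, |b(t,x,y) - b(t,x,\cdot)|\rangle \le K(1 + \|y\|_T + m_0)$, hence
\begin{align*}
\sup_{(t,x) \in (0,T)\times\C_T^d}\int_{\C_T^d} e^{\kappa |b(t,x,y) - \langle\mu,b(t,x,\cdot)\rangle|^2}\,\mu(dy) \le e^{3\kappa K^2(1+m_0^2)}\,\big\langle \mu,\, e^{3\kappa K^2\|\cdot\|_T^2}\big\rangle < \infty
\end{align*}
on choosing $\kappa := c/(6K^2)$; Lemma \ref{le:integ-transp-equiv} then gives \eqref{ineq:transp:sublin}. (Note also that \eqref{ineq:expmoment:sublin} is exactly the integral criterion for the transport inequality $\W_1^2(\cdot,\mu) \le C\,H(\cdot\,|\,\mu)$ invoked in Section \ref{se:proofs:sublinear}.)

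\emph{Uniqueness.} Let $\mu,\tilde\mu$ both solve; both satisfy the bounds above, both start from $\mu_0$, and $\overline{b}_\mu(s,x) - \overline{b}_{\tilde\mu}(s,x) = \langle\mu[s]-\tilde\mu[s], b(s,x,\cdot)\rangle$ has linear growth in $\|x\|_s$ and is therefore square-integrable in $ds$ under $\mu$ by \eqref{ineq:expmoment:sublin}. Lemma \ref{le:entropy-diffusions}(i) with $b^2 := \overline{b}_{\tilde\mu}$ (whose SDE is well posed) gives $H(\mu[t]\,|\,\tilde\mu[t]) \le \tfrac12 \E^\mu\!\int_0^t |\langle\mu[s]-\tilde\mu[s],b(s,X,\cdot)\rangle|^2\,ds$. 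Applying the weighted Pinsker inequality \eqref{ineq:weightedPinsker} to the function $\epsilon\big(b(s,x,\cdot) - \langle\tilde\mu[s], b(s,x,\cdot)\rangle\big)$ for $\epsilon>0$ small and bounding the resulting exponential weight uniformly in $x$ via the exponential moment of $\tilde\mu$, one gets $|\langle\mu[s]-\tilde\mu[s],b(s,x,\cdot)\rangle|^2 \le \gamma' H(\mu[s]\,|\,\tilde\mu[s])$ for all $x$ and $s$, whence $H(\mu[t]\,|\,\tilde\mu[t]) \le \tfrac{\gamma'}{2}\int_0^t H(\mu[s]\,|\,\tilde\mu[s])\,ds$; Gronwall forces $H(\mu[t]\,|\,\tilde\mu[t])=0$ for all $t\le T$, so $\mu=\tilde\mu$.

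\emph{Existence, and the main obstacle.} For existence I would argue by approximation. Replace $b$ by $b^N := \pi_N \circ b$, where $\pi_N$ is the $1$-Lipschitz radial projection of $\R^d$ onto the ball of radius $N$; then $b^N$ is bounded and progressively measurable, $|b^N|\le|b|$, and $(b_0,b^N)$ still satisfies \eqref{condition:lingrowth} with the same $K$. Since SDE$(b_0)$ is well posed, Proposition \ref{pr:MVwellposed-bounded} provides a unique solution $\mu^N$, and the a priori estimates above hold for each $\mu^N$ with constants independent of $N$; together with the uniform-in-$N$ modulus-of-continuity estimate coming from the linear growth bound (using $|\langle\mu^N,b^N(s,x,\cdot)\rangle|\le K(1+\|x\|_s+m_0)$), this makes $(\mu^N)_N$ tight. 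The hardest step is to show that a subsequential weak limit $\mu$ solves the McKean-Vlasov equation: because $b$ is only measurable, $\langle\mu^{N_j}, b^{N_j}(t,x,\cdot)\rangle$ need not converge to $\langle\mu, b(t,x,\cdot)\rangle$ from weak convergence alone. I expect this to be the main obstacle, and would resolve it using the smoothing effect of the nondegenerate noise — the time marginals $\mu^N_t$ ($t>0$) have densities with locally uniform $L^p$-bounds (standard parabolic / Krylov estimates, uniform in $N$ since the drifts are uniformly of linear growth), which after mollifying $b$ justifies the limit passage in the martingale-problem formulation. The uniqueness already established then identifies the limit and completes the proof.
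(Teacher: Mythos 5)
Your setup, a priori estimates, derivation of \eqref{ineq:expmoment:sublin}--\eqref{ineq:transp:sublin}, and uniqueness argument all essentially match the paper's (the paper truncates by the componentwise clamp $b^n=(b\wedge n)\vee(-n)$ rather than a radial projection, but this is inessential). The gap is in your existence step.

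You correctly identify the obstacle: weak convergence of the approximants $\mu^N$ alone does not allow passing to the limit in $\langle\mu^{N}, b^{N}(t,x,\cdot)\rangle$ because $b$ is merely measurable. But the resolution you propose — uniform $L^p$ bounds on the time-marginal densities via parabolic/Krylov estimates, followed by mollification — does not work in the setting at hand, because $b(t,x,\cdot)$ is a functional on \emph{path space} $\C_T^d$, not on $\R^d$. Density control on the one-time marginals $\mu^N_t$ gives no handle on integrating a path-dependent integrand $b^N(t,x,\cdot)$ against $\mu^N$; the relevant object would be a ``density'' of the full path law in $\P(\C_T^d)$, which has no such smoothing structure. (Even in the Markovian case, the linear growth of $b$ strains the $L^p$/$L^q$ duality at infinity, so the argument would require additional localization and is far from automatic.)

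The paper sidesteps this entirely by proving the much stronger statement that $(\mu^n)$ is Cauchy \emph{in total variation}, using precisely the same entropy/Girsanov machinery you already deploy for uniqueness: one bounds $H(\mu^n[t]\,|\,\mu^m[t])$ via Lemma \ref{le:entropy-diffusions}, decomposes the drift difference into two truncation errors (vanishing as $n,m\to\infty$ by the uniform exponential moment) plus $|\langle\mu^n-\mu^m,b(s,x,\cdot)\rangle|^2$, controls the latter term via the weighted Pinsker inequality \eqref{ineq:weightedPinsker} uniformly in $n,m$, and closes with Gronwall. Total-variation convergence lets one integrate bounded measurable test functions; a Gibbs-variational-formula argument, using the uniform bound $\sup_n H(\mu^n\,|\,\overline\mu)<\infty$ against the law $\overline\mu$ of the drift-$b_0$ SDE, then upgrades this to linear-growth test functions, which is exactly what is needed to pass to the limit in the martingale problem. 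The key idea you are missing is that the entropic Gronwall estimate that yields uniqueness \emph{also} yields strong (TV) convergence of the truncated scheme, making any PDE/density smoothing argument unnecessary and insulating the proof from the path dependence of $b$.
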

\begin{proof} Fix an initial law $\mu_0 \in \P(\R^d)$ satisfying the stated condition.

{\ } \vskip-.2cm

\noindent\textit{Uniqueness.} We first prove that there is at most one solution. Let $X^i\sim\mu^i$ for $i=1,2$ be two solutions, 
\begin{align*}
dX^i_t = \left(b_0(t,X^i) + \langle \mu^i,b(t,X^i,\cdot)\rangle\right)dt + dW^i_t, \ \ X^i_0 \sim \mu_0, \ \ \mu^i=\mathrm{Law}(X^i).
\end{align*}
Using the assumption \eqref{condition:lingrowth}, for $t \in [0,T]$ we have
\begin{align*}
\|X^i\|_t \le |X_0^i| + KT + K\int_0^t (\|X^i\|_s + \E\|X^i\|_s)\,ds + \|W^i\|_t.
\end{align*}
Take expectations and apply Gronwall to get $\E\|X^i\|_T \le e^{2KT}(\E|X^i_0| + 4dT)$, where we used also $\E \|W^i\|_T^2 \le 4\E |W_T^i|^2 = 4dT$. Use Gronwall again to get also
\begin{align*}
\|X^i\|_T \le e^{KT}\left(|X^n_0| + KT + KT\E\|X^i\|_T +  \|W^i\|_T\right).
\end{align*}
Since $\E e^{c\|W\|_T^2} < \infty$ for sufficiently small $c > 0$ by Fernique's theorem, the assumption on $\mu_0$ ensures that we may find $c < 0$ small enough and $C < \infty$ that
\begin{align}
\int_{\C_T^d} e^{c\|x\|_T^2}\,\mu^i(dx) = \E\, e^{c\|X^i\|_T^2} \le C, \label{pf:wellposed-sublin3.0}
\end{align}
for $i=1,2$.
Next, note for $m \in \P(\C_T^d)$ that  the assumption \eqref{condition:lingrowth} yields
\begin{align}
\big|b(t,x,y) - \langle m,b(t,x,\cdot)\rangle\big| &\le \int_{\C^d_T} | b(t,x,y)-b(t,x,y')| \, m(dy') \nonumber \\
	&\le K\left(1 + \|y\|_T + \int_{\C^d_T}  \|y'\|_T \, m(dy')\right), \label{pf:wellposed-sublin3.1}
\end{align}
for all  $(t,x,y)$.
Use Lemma \ref{le:entropy-diffusions} and the weighted Pinsker inequality \eqref{ineq:weightedPinsker} to get
\begin{align}
H(\mu^1[t]\,|\,\mu^2[t]) &= \frac12\int_{\C_T^d}\int_0^t \left| \langle \mu^1 - \mu^2,b(s,x,\cdot)\rangle\right|^2\,ds \\
	&\le \epsilon^{-1}(1+ R_\epsilon ) \int_0^t H(\mu^1[s]\,|\,\mu^2[s]) \,ds, \label{pf:wellposed-sublin3.2}
\end{align}
for each $\epsilon > 0$, where we define
\begin{align*}
R_\epsilon := \sup_{t \in [0,T],\,x \in \C_T^d}\log\int \exp\left(\epsilon \big|b(t,x,y) - \langle \mu^2,b(t,x,\cdot)\rangle\big|^2\right)\,\mu^2(dy).
\end{align*}
If we choose $\epsilon = c/K^2$, then from \eqref{pf:wellposed-sublin3.1} and \eqref{pf:wellposed-sublin3.0} we deduce that $R_\epsilon < \infty$. Thus, applying Gronwall's inequality to \eqref{pf:wellposed-sublin3.2} shows that $H(\mu^1[t]\,|\,\mu^2[t]) =0$ for all $t$, and so $\mu^1=\mu^2$.
The finiteness of $R_\epsilon$, along with Lemma \ref{le:integ-transp-equiv}, implies that any solution must satisfy \eqref{ineq:transp:sublin}.

{\ }

\noindent\textit{Existence.} To prove existence of a solution, we truncate $b$, apply Proposition \ref{pr:MVwellposed-bounded}, and take limits. Note first that the linear growth assumption ensures that the SDE($b_0$) is well-posed. Indeed, existence follows from linear growth and Girsanov's theorem, by  an argument due to Bene\u{s} \cite[Corollary 3.5.16]{karatzas-shreve}. Uniqueness also follows from Girsanov, e.g., by using \cite[Theorem 7.7]{liptser-shiryaev} to identify the (unique) form of the Radon-Nikodym derivative of Wiener measure with respect to the law of the solution. Let $\overline\mu\in\P(\C_T^d)$ denote the law of the unique solution of the SDE($b_0$) started from $\mu_0$.

{\ }

\noindent\textit{Step 1.} We first define the truncations and establish some key integrability estimates.
Define $b^n:= (b \wedge n) \vee (-n)$ for each $n \in \N$. 
By Proposition \ref{pr:MVwellposed-bounded} and boundedness of $b^n$, there exists a unique solution $X^n \sim \mu^n$ of the McKean-Vlasov equation
\begin{align*}
dX^n_t = \left(b_0(t,X^n) + \langle \mu^n,b^n(t,X^n,\cdot)\rangle\right)dt + dW^n_t, \ \ X^n_0 \sim \mu_0, \ \ \mu^n=\mathrm{Law}(X^n).
\end{align*}
Since $b^n$ satisfies the same (first) sublinearity inequality \eqref{condition:lingrowth}, we have as in \eqref{pf:wellposed-sublin3.0} that
there exist $c,C_1 > 0$ such that
\begin{align}
\sup_n\int_{\C_T^d} e^{c\|x\|_T^2}\,\mu^n(dx)  \le C_1. \label{pf:wellposed-sublin3}
\end{align}
Similarly, $\overline\mu$ satisfies the same estimate.
For later use, we note also that if $\epsilon := c/K^2$ then \eqref{pf:wellposed-sublin3} and \eqref{pf:wellposed-sublin3.1} imply that
\begin{align}
R  := \sup_{n \in \N}\sup_{t \in [0,T],\,x \in \C_T^d}\log\int \exp\left(\epsilon \big|b(t,x,y) - \langle \mu^n,b(t,x,\cdot)\rangle\big|^2\right)\,\mu^n(dy) < \infty. \label{pf:wellposed-sublin-Rfin}
\end{align}
Lastly, we may use Lemma \ref{le:entropy-diffusions} to get 
\begin{align}
\sup_n H(\mu^n\,|\,\overline\mu) &= \frac12 \sup_n \int_{C_T^d}\int_0^T|\langle \mu^n,b^n(t,x,\cdot)\rangle|^2\,dt\,\mu^n(dx) < \infty, \label{pf:wellposed-sublin-Hfin}
\end{align}
which is finite because of the linear growth assumption and the estimate \eqref{pf:wellposed-sublin3}. This will not be needed until later.

{\ }

\noindent\textit{Step 2.} Next, we show that $(\mu^n)_{n \in \N}$ is a Cauchy sequence in the total variation norm, which implies by completeness that it converges.
Using Lemma \ref{le:entropy-diffusions}, we have for each $n,m \in \N$ and $t \in [0,T]$
\begin{align*}
H(\mu^n[t]\,|\,\mu^m[t]) &= \frac12 \int_{C_T^d}\int_0^t|\langle \mu^n,b^n(s,x,\cdot)\rangle - \langle \mu^m,b^m(s,x,\cdot)\rangle|^2\,ds\,\mu^n(dx) \\
	&\le \frac32\int_{C_T^d}\int_0^t|\langle \mu^n,b^n(s,x,\cdot) - b(s,x,\cdot)\rangle|^2 \,ds\,\mu^n(dx) \\
	&\qquad + \frac32\int_{C_T^d}\int_0^t|\langle \mu^m,b^m(s,x,\cdot) - b(s,x,\cdot)\rangle|^2 \,ds\,\mu^n(dx) \\
	&\qquad + \frac32\int_{C_T^d}\int_0^t|\langle \mu^n-\mu^m,b(s,x,\cdot)\rangle|^2 \,ds\,\mu^n(dx).
\end{align*}
By Jensen's inequality, 
\begin{align*}
\int_{\C_T^d} &|\langle \mu^n,b^n(s,x,\cdot)-b(s,x,\cdot)\rangle|^2\,\mu^n(dx)  \\ 
	\qquad &\le \int_{\C_T^d}\int_{\C_T^d}|b(s,x,y)|^21_{\{|b(s,x,y)| \ge n\}}\,\mu^n(dx)\,\mu^n(dy).
\end{align*}
It follows from \eqref{pf:wellposed-sublin3} and the linear growth assumption of \eqref{condition:lingrowth} that the right-hand side vanishes as $n\to\infty$.
Similarly,
\begin{align*}
\int_{\C_T^d} &|\langle \mu^m,b^m(s,x,\cdot)-b(s,x,\cdot)\rangle|^2\,\mu^n(dx)   \\	
\qquad &\le \int_{\C_T^d}\int_{\C_T^d}|b(s,x,y)|^21_{\{|b(s,x,y)| \ge m\}}\,\mu^m(dx)\,\mu^n(dy),
\end{align*}
which vanishes as $n,m\to\infty$ for the same reasons.
On the other hand, with $\epsilon=c/K^2$ and $R$ as in \eqref{pf:wellposed-sublin-Rfin}, the weighted Pinsker inequality \eqref{ineq:weightedPinsker} and the progressive measurability of $b$ yield
\begin{align*}
\int_{C_T^d} & \int_0^t |\langle \mu^n-\mu^m,b(s,x,\cdot)\rangle|^2  \,dt\,\mu^n(dx) \le \frac{2}{\epsilon} (1+R)\int_0^t H(\mu^n[s]\,|\,\mu^m[s])  \,dt.
\end{align*}
Using Gronwall's inequality, we finally deduce that $\lim_{n,m\to\infty}H(\mu^n[t]\,|\,\mu^m[t]) = 0$. The claim that $(\mu^n)_{n \in \N}$ is a $\|\cdot\|_{\mathrm{TV}}$-Cauchy sequence follows from Pinsker's inequality.

{\ }

\noindent\textit{Step 3.} Thanks to Step 2, we know $\|\mu^n -\mu\|_{\mathrm{TV}} \to 0$ for some $\mu \in \P(\C_T^d)$. We next upgrade the convergence to allow test functions with linear growth: If $f : \C_T^d \to \R$ is measurable with $\sup_x |f(x)|/(1+\|x\|_T) < \infty$, then $\langle \mu^n,f\rangle \to \langle \mu,f\rangle$. 
To see this, let $k \in \N$ and $\delta > 0$, and apply the Gibbs variational formula \cite[Proposition 2.3(b)]{budhiraja-dupuis} to get
\begin{align*}
\int_{\{|f| > k\}} |f|\,d\mu^n \le \delta \log \int e^{\delta^{-1} |f| 1_{\{|f| > k\}} } \,d\overline\mu + \delta H(\mu^n\,|\,\overline\mu).
\end{align*}
where we recall that $\overline\mu$ is the law of the solution of the SDE($b_0$) started from $\mu_0$.
The linear growth of $f$ and the estimate \eqref{pf:wellposed-sublin3} ensure that the first term converges to zero as $k\to\infty$, for any $\delta > 0$. Hence, sending $k\to\infty$ and then $\delta \to 0$, the uniform bound on $H(\mu^n\,|\,\overline\mu)$ from \eqref{pf:wellposed-sublin-Hfin} ensures that $\int_{\{|f| > k\}} f\,d\mu^n \to 0$ as $k\to\infty$, uniformly in $n$. The integrability \eqref{pf:wellposed-sublin3} for $\overline\mu$ implies also that $\int_{\{|f| > k\}} |f|\,d\overline\mu \to 0$  as $k\to\infty$. Writing
\begin{align*}
\langle \mu^n -\mu,f\rangle &= \int_{\{|f| \le k\}} f\,d(\mu^n-\mu) + \int_{\{|f| > k\}} f\,d(\mu^n-\mu),
\end{align*}
the right hand side vanishes by sending  $n\to\infty$ and then $k\to\infty$.

{\ }

\noindent\textit{Step 4.} We complete the proof by showing that $\mu$ is the law of a solution to the McKean-Vlasov equation. To do so, we study the corresponding \emph{martingale problem}, in the sense of Stroock-Varadhan \cite{stroock-varadhan}, and we refer to \cite{stroock-varadhan} for the now-classical definitions and connection to weak solutions of SDEs.
Using the McKean-Vlasov equation solved by  $\mu^n$, in martingale problem form, we know that 
\begin{align}
\begin{split}
0 &= \int_{\C_T^d}h(x|_{[0,s]})\Bigg[\varphi(x_t) - \varphi(x_s) \\
	&\qquad\quad - \int_s^t \left(\left(b_0(u,x) + \langle \mu^n,b^n(u,x,\cdot)\rangle\right) \cdot \nabla\varphi(x_u) + \frac12\Delta\varphi(x_u)\right)du\Bigg]\,\mu^n(dx),
\end{split} \label{pf:wellposed-sublin2.1}
\end{align}
for each $0 \le s < t \le T$, each $n \in \N$, each bounded continuous function $h : \C_s^d \to \R$, and each smooth function $\varphi : \R^d \to \R$ of compact support. We wish to send $n\to\infty$.

We claim first that, for each $t \ge 0$,
\begin{align}
\lim_{n\to\infty} \int_{\C_T^d} |\langle \mu^n, b^n(t,x,\cdot)\rangle - \langle \mu, b(t,x,\cdot)\rangle| \,\mu^n(dx) \to 0. \label{pf:wellposed-sublin2}
\end{align}
To see this, first note
using Jensen's inequality and the definition of $b^n$ that
\begin{align*}
\int_{\C_T^d} & |\langle \mu^n,b^n(t,x,\cdot)-b(t,x,\cdot)\rangle|\,\mu^n(dx)  \le  \int_{\C_T^d}\int_{\C_T^d}|b(t,x,y)| 1_{\{|b(t,x,y)| \ge n\}}\,\mu^n(dx)\,\mu^n(dy),
\end{align*}
which converges to zero thanks to the linear growth assumption and \eqref{pf:wellposed-sublin3}.
Hence, to prove \eqref{pf:wellposed-sublin2}, it suffices to show that $\langle \mu^n,|a_n|\rangle \to 0$, where we set $a_n(x) := \langle \mu^n-\mu, b(t,x,\cdot)\rangle$. The result of Step 3 and the linear growth assumption on $b$ imply that $a_n\to 0$ pointwise. Using the assumption \eqref{condition:lingrowth}, we have
\begin{align*}
|a_n(x)| &\le \int_{\C_T^d}\int_{\C_T^d} \left|b(t,x,y) - b(t,x,y')\right|\mu^n(dy)\mu(dy') \\
	&\le K\left(1 + \int_{\C_T^d}\|y\|_T \mu^n(dy) + \int_{\C_T^d}\|y'\|_T \mu(dy')\right) < \infty.
\end{align*}
For each $\delta > 0$, the Gibbs variational formula \cite[Proposition 2.3(b)]{budhiraja-dupuis} again yields
\begin{align*}
\langle \mu^n,|a_n|\rangle \le \delta \log\int e^{\delta^{-1}|a_n|}\,d\overline\mu + \delta \sup_k H(\mu^k\,|\,\overline\mu).
\end{align*}
For fixed $\delta > 0$, the first term vanishes as $n\to\infty$ by the bounded convergence theorem.
Recalling \eqref{pf:wellposed-sublin-Hfin}, we may send $n\to\infty$ and then $\delta \to 0$ to find $\langle \mu^n,|a_n|\rangle \to 0$.

With \eqref{pf:wellposed-sublin2} now established, it follows from \eqref{pf:wellposed-sublin2.1} that
\begin{align*}
0 &= \lim_{n\to\infty}\int_{\C_T^d}h(x|_{[0,s]})\Bigg[\varphi(x_t) - \varphi(x_s) \\
	&\qquad\qquad\quad - \int_s^t \left(\left(b_0(u,x) + \langle \mu,b(u,x,\cdot)\rangle\right) \cdot \nabla\varphi(x_u) + \frac12\Delta\varphi(x_u)\right)du\Bigg]\,\mu^n(dx).
\end{align*}
Using the linear growth assumption and the result of Step 3, we deduce
\begin{align*}
0 &= \int_{\C_T^d}h(x|_{[0,s]})\Bigg[\varphi(x_t) - \varphi(x_s) \\
	&\qquad\quad - \int_s^t \left(\left(b_0(u,x) + \langle \mu,b(u,x,\cdot)\rangle\right) \cdot \nabla\varphi(x_u) + \frac12\Delta\varphi(x_u)\right)du\Bigg]\,\mu(dx).
\end{align*}
This holds for all $(t,s,h,\varphi)$ as above, which implies that $\mu$ is a solution of the martingale problem corresponding to the McKean-Vlasov equation.
\end{proof}

\begin{remark} \label{re:MVuniqueness}
A more general principle is at work in the uniqueness part of Propositions \ref{pr:MVwellposed-bounded} and \ref{pr:MVwellposed-sublinear}.
Precisely, suppose $\mu \in \P(\C_T^d)$ is a solution of the McKean-Vlasov equation \eqref{def:introMV-nonMarkov} satisfying conditions (1) and (3) of Theorem \ref{th:intro-dynamic}. Suppose another solution $\widetilde\mu$ satisfies $\widetilde\mu_0=\mu_0$, and the function $[0,T] \times \C_T^d \ni (t,x) \mapsto |\langle \widetilde\mu - \mu,b(t,x,\cdot)\rangle|$ belongs to $L^2(dt \otimes (\mu+\widetilde\mu))$. 
Then the well-posedness assumption (1) justifies an application of Lemma \ref{le:entropy-diffusions}(iii), and  the transport-type inequality (3) and progressive measurability of $b$ imply
\begin{align*}
H(\widetilde\mu[t]\,|\,\mu[t]) &= \frac12 \int_0^t \int_{\R^d} |\langle \widetilde\mu - \mu,b(s,x,\cdot)\rangle|^2\,\mu(dx)\,ds \le \frac{\transpconst}{2}\int_0^t H(\widetilde\mu[s]\,|\,\mu[s])\,ds,
\end{align*}
for $t \in [0,T]$. Gronwall's inequality implies $\widetilde\mu=\mu$. 
\end{remark}

\section{Proofs for infinite-range interactions} \label{se:proofs-inf}

This section is devoted to the proof of Theorem \ref{th:intro-infrange}, which we first briefly preview. It follows a similar scheme as in the case of pairwise interactions, as outlined in Section \ref{se:ideas} and implemented in Section \ref{se:proofs-dyn}. In the finite-range case, where $s_\ell=0$ for all $\ell > r$ for some $r \in \N$, the main difference is that the final term in \eqref{idea:diffeq-entropy} becomes $H^{k+r}_t-H^k_t$ instead of $H^{k+1}_t-H^k_t$. In the infinite-range case, we truncate the range of interactions, and this is where the parameter $\ell$ and the term $\tailfn_0(\ell)$ in \eqref{infrange-est} come from.

Throughout this section, for integers $k < n$ we write $(k,n]:=\{k+1,k+2,\ldots,n\}$. When $k=0$, we write simply $[n]:=(0,n]$. The Cartesian product is denoted as usual $(k,n]^\ell$ for integers $\ell \ge 1$, and we write $(k,n]^\ell_{\neq}$ for the subset consisting of those vectors with distinct entries.
Note of course that $(k,n]^{\ell}_{\neq} = \emptyset$ if $\ell > n-k$ by the pigeonhole principle.
Note also that $|[k]^\ell|=k^\ell$ and $|[k]^{\ell}_{\neq}| = (k)_{\ell}$ for $\ell \le k$, where we use the falling factorial notation 
\[
(x)_{\ell} := x(x-1)\cdots(x-\ell+1) = \prod_{i=0}^{\ell-1}(x-i), \qquad x \in \R.
\]
We state here for later use a fairly standard estimate which can be interpreted as comparing sampling schemes \emph{with} versus \emph{without} replacement, cf.\ \cite{diaconis1980finite} or \cite[Theorem 3.1]{golse2013empirical}.

\begin{lemma} \label{le:combin1}
Let $\ell, n \in \N$, and let $-1 \le a_S \le 1$ for $S \in [n]^{\ell}$.  Then
\begin{align*}
\Bigg|\frac{1}{n^{\ell}}\sum_{S \in [n]^\ell } a_S - \frac{1}{(n-1)_{\ell}}\sum_{S \in (1,n]^{\ell}_{\neq} } a_S\Bigg| \le \frac{\ell(\ell+1)}{n}.
\end{align*}
\end{lemma}
\begin{proof}
If $\ell \ge n$, then $(1,n]_{\neq}^\ell=\emptyset$, and the claim follows from the trivial estimate
\begin{align*}
\Bigg|\frac{1}{n^{\ell}}\sum_{S \in [n]^\ell } a_S \Bigg| \le 1 \le \frac{\ell}{n}.
\end{align*}
If $\ell \le n-1$, then 
\begin{align*}
\Bigg|\frac{1}{n^{\ell}}\sum_{S \in [n]^\ell } a_S - \frac{1}{(n-1)_{\ell}}\sum_{S \in (1,n]^{\ell}_{\neq} } a_S\Bigg| &\le \left|\frac{1}{n^\ell} - \frac{1}{(n-1)_{\ell}}\right|\Bigg| \sum_{S \in (1,n]^{\ell}_{\neq}} a_S\Bigg| + \Bigg|\frac{1}{n^{\ell}}\sum_{S \in [n]^\ell \setminus (1,n]^{\ell}_{\neq}} a_S\Bigg| \\
	&\le 2\left( 1 - \frac{ (n-1)_{\ell} }{n^\ell} \right) = 2\left(1 - \prod_{i=0}^{\ell-1} \left( 1- \frac{1+i}{n}\right)\right) \\
	&\le 2\sum_{i=0}^{\ell-1}\frac{1+i}{n} = \frac{\ell(\ell+1)}{n}.
\end{align*}
The last line used the well known inequality 
\begin{align*}
1-\prod_{i=1}^{m}(1-c_i) \le \sum_{i=1}^m c_i, \quad \text{for } c_1,\ldots,c_m \in [0,1],
\end{align*}
which is precisely the inequality $f(0) - f(1) \le -f'(0)$ applied to the function $f(t) = \prod_{i=1}^{m}(1-t c_i)$ which is convex on $[0,1]$.
\end{proof}

\subsection{Estimating the full entropy} \label{se:infrange-fullentropy}

Abbreviate $H^k_t := H_t(P^{(k)}[t] \,|\, \mu^{\otimes k}[t])$ for each $k\in [n]$ and $t \in [0,T]$.
Recall that $(X^1,\ldots,X^n) \sim P^{(n)}$ solves the SDE system \eqref{def:mainSDE-finrange}. 

First, we will apply Lemma \ref{le:entropy-diffusions}(iii). To justify this, define $\overline{b} : [0,T] \times \C_T^d \to \R^d$ by
\begin{align*}
\overline{b}(t,x) = b_0(t,x) + b(t,x,\mu).
\end{align*}
Then, since $\overline{b}-b_0$ is bounded and the SDE($b_0^{\otimes n}$) is well-posed by assumption, a standard argument using Girsanov's theorem shows that that the SDE($\overline{b}^{\otimes n}$) is well-posed.
Hence, we may apply Lemma \ref{le:entropy-diffusions}(iii), recalling the definition of $\bar{s}_0$ from \eqref{def:tailfn}, to get
\begin{align}
H^n_T &= H^n_0 + \frac12 \sum_{i=1}^n \E\int_0^T \big| b(t,X^i,L^n) - b(t,X^i,\mu) \big|^2 dt \le n(C_0 + T\bar{s}_0^2/2). \label{pf:Hnbound-infrange}
\end{align}
Indeed, the last step used \eqref{def:powerseries} to deduce $|b(t,X^i,L^n) - b(t,X^i,\mu)| \le \bar{s}_0$, noting that the assumption $|b_\ell|\le 2^{-\ell}$ implies
\begin{align}
|\langle (L^n-\mu)^{\otimes \ell},b_\ell\rangle| \le 2^{-\ell} \|(L^n-\mu)^{\otimes \ell}\|_{\mathrm{TV}} \le 2^{-\ell} \|L^n-\mu\|_{\mathrm{TV}}^\ell \le 1. \label{pf:infrange-TVbound}
\end{align}

\subsection{Main proof}

Now, fix $k \in \N$ with $1 \le k < n/2$.
We will apply Lemma \ref{le:proj-pathdep} to identify a convenient representation of $P^{(k)}$. 
Let $\F^k_t = \sigma((X^1_s,\ldots,X^k_s)_{s \le t})$ denote the natural filtration of the first $k$ coordinates.
By Lemma \ref{le:proj-pathdep}, we have that $(X^1,\ldots,X^k)$ solves the SDE system
\begin{align*}
dX^i_t = \bigg(b_0(t,X^i) + \E[b(t,X^i,L^n)\,|\,\F^k_t]\bigg)dt + d\widehat{W}^i_t,
\end{align*}
for independent Brownian motions $\widehat{W}^1,\ldots,\widehat{W}^k$.
We then deduce, using Lemma \ref{le:entropy-diffusions} and exchangeability, that $t \mapsto H^k_t$ is absolutely continuous, and for a.e.\ $t \in [0,T]$
\begin{align*}
\frac{d}{dt}H^k_t &=\frac12 \sum_{i=1}^k \E\big|  \E[b(t,X^i,L^n)\,|\,\F^k_t] - b(t,X^i,\mu)\big|^2 \\
	&= \frac{k}{2}\E\big|  \E[b(t,X^1,L^n) - b(t,X^1,\mu)\,|\,\F^k_t]\big|^2 \\
	&= \frac{k}{2}\E\bigg| \sum_{\ell=1}^{\infty} \E\big[\big\langle (L^n-\mu)^{\otimes \ell} ,b_\ell(t,X^1,\cdot)\big\rangle\,\big|\,\F^k_t\big] \bigg|^2.
\end{align*}
We next truncate the infinite range of interactions, so that we may handle the finite-range part as in the proof of Theorem \ref{th:intro-dynamic}.
Let $\bar\ell \in \N$ with $\bar\ell < n/2$ and $\bar\ell \le n-k$. Then 
\begin{align*}
\frac{d}{dt}H^k_t  &\le k \E\bigg| \sum_{\ell=1}^{\bar\ell} \E\big[\big\langle (L^n-\mu)^{\otimes \ell} ,b_\ell(t,X^1,\cdot)\big\rangle\,\big|\,\F^k_t\big] \bigg|^2  \\
	&\quad + k \E\bigg| \sum_{\ell=\bar\ell+1}^{\infty} \E\big[\big\langle (L^n-\mu)^{\otimes \ell} ,b_\ell(t,X^1,\cdot)\big\rangle\,\big|\,\F^k_t\big] \bigg|^2  .
\end{align*}
Using $|b_\ell| \le 2^{-\ell}$ as in \eqref{pf:infrange-TVbound}, we may bound the second term by
\begin{align*}
 k \bigg(\sum_{\ell=\bar\ell+1}^{\infty}  s_\ell\bigg)^2 =  k \bar{s}_0^2 \tailfn_0^2(\bar\ell),
\end{align*}
For the first term, we expand the tensor power to write
\begin{align*}
\big\langle(L^n-\mu)^{\otimes \ell},b_\ell(t,X^1,\cdot)\big\rangle = \frac{1}{n^\ell}\sum_{S \in [n]^\ell} Z_S, \qquad Z_S := \Big\langle \bigotimes_{i \in S} (\delta_{X^i}-\mu), b_\ell(t,X^1,\cdot)\Big\rangle,
\end{align*}
which is well defined without regard to the order of the tensor product because the function $b_\ell(t,X^1,\cdot)$ is symmetric.
Since $|Z_S| \le 1$ (argued again as in \eqref{pf:infrange-TVbound}), Lemma \ref{le:combin1} implies
\begin{align*}
\Bigg|\frac{1}{n^{\ell}}\sum_{S \in [n]^\ell }Z_S - \frac{1}{(n-1)_{\ell}}\sum_{S \in (1,n]^\ell_{\neq} } Z_S\Bigg| \le \frac{ \ell(\ell+1)}{n} \le \frac{2\ell^2}{n},
\end{align*}
for each $\ell \ge 1$, where we used also $\ell+1 \le 2\ell$. Hence,
\begin{align}
\frac{d}{dt}H^k_t  &\le k \bar{s}_0^2 \tailfn_0^2(\bar\ell) + 2k\bigg( \sum_{\ell=1}^{\bar\ell} s_\ell \frac{2\ell^2}{n} \bigg)^2 + 2k \E \bigg| \sum_{\ell=1}^{\bar\ell} \frac{s_\ell}{(n-1)_{\ell}}\sum_{S \in (1,n]^{\ell}_{\neq} } \E[ Z_S\,|\,\F^k_t]  \bigg|^2 \nonumber \\
	&\le  k \bar{s}_0^2 \tailfn_0^2(\bar\ell) + \frac{8k}{n^2}\bar{s}_2^2 + R_1  + R_2, \label{pf:infrange-ent1}
\end{align}
where we define
\begin{align*}
R_1 &:= 4k \E \bigg| \sum_{\ell=1}^{\bar\ell} \frac{s_\ell}{(n-1)_{\ell}}\sum_{S \in (1,k]^\ell_{\neq}  } \E[ Z_S\,|\,\F^k_t] \bigg|^2, \\
R_2 &:= 4k \E \bigg| \sum_{\ell=1}^{\bar\ell} \frac{s_\ell}{(n-1)_{\ell}}\sum_{S \in (1,n]^\ell_{\neq} \setminus   (1,k]_{\neq}^\ell }  \E[ Z_S\,|\,\F^k_t] \bigg|^2.
\end{align*}
To estimate $R_1$, use use $|Z_S| \le 1$ and $(k-1)_\ell/(n-1)_\ell \le k/n$ for $\ell \ge 1$ to get
\begin{align}
R_1 &\le 4k\bigg(\sum_{\ell=1}^{\bar\ell} s_\ell \frac{(k-1)_\ell}{(n-1)_\ell}\bigg)^2 \le \frac{4 k^3}{n^2}\bar{s}_0^2. \label{pfinfrange-R1bound}
\end{align}
To estimate $R_2$, first use convexity of $x\mapsto x^2$ to get
\begin{align*}
R_2 &\le 4k\bigg( \sum_{\ell=1}^{\infty}  s_\ell  \bigg) \sum_{\ell=1}^{\bar\ell}\frac{s_\ell}{(n-1)_{\ell}}\sum_{S \in (1,n]^\ell_{\neq} \setminus   (1,k]_{\neq}^\ell } \E \big| \E[ Z_S\,|\,\F^k_t]\big|^2.
\end{align*}
We next use Pinsker's inequality and the chain rule for relative entropy \eqref{def:chainrule}. Let $1 \le \ell \le \bar\ell$ and $S \in (1,n]^\ell_{\neq} \setminus   (1,k]_{\neq}^\ell$ be arbitrary, and choose $i^* \in (k,n] \cap S$.
Let $P_S$ denote a version of the conditional law of $X^{i^*}$ given $\F^{S,i^*}_t$, the $\sigma$-algebra generated by $(X^j|_{[0,t]})_{j \in [k] \cup S \setminus \{i^*\}}$.
The key point is that, \emph{because $S$ contains $i^*$ at only one index}, we may use the identity $P_S=\E[\delta_{X_{i^*}}\,|\,\F^{S,i^*}_t]$ to deduce
\begin{align*}
\E[ Z_S\,|\,\F^{S,i^*}_t] &= \E\Big[\Big\langle \bigotimes_{i \in S} (\delta_{X^i}-\mu), b_\ell(t,X^1,\cdot)\Big\rangle \,\Big|\, \F^{S,i^*}_t\Big] \\
	&= \Big\langle (P_S - \mu ) \otimes \bigotimes_{i \in S \setminus \{i^*\}} (\delta_{X^i}-\mu), b_\ell(t,X^1,\cdot)\Big\rangle.
\end{align*}
Pinsker's inequality yields (since $|b_\ell| \le 2^{-\ell}$)
\begin{align*}
\langle P_S - \mu, b_\ell(t,X^1,(X^i)_{i \in S \setminus \{i^*\}},\cdot)\rangle \le 2^{1-\ell}H(P_S[t]\,|\,\mu[t]).
\end{align*}
Thus, since $|S \setminus\{i^*\}|=\ell-1$ and the norm of a tensor product is the product of the norms,
\begin{align*}
|\E[ Z_S\,|\,\F^{S,i^*}_t]| \le 2^{1-\ell}H(P_S[t]\,|\,\mu[t]) \bigg\| \bigotimes_{i \in S \setminus \{i^*\}} (\delta_{X^i}-\mu) \bigg\|_{\mathrm{TV}} \le H(P_S[t]\,|\,\mu[t]).
\end{align*}
Finally, since $\F^k_t \subset \F^{S,i^*}_t$, we deduce
\begin{align*}
\E\big|\E[ Z_S\,|\,\F^k_t]\big|^2 &\le \E\big|\E[ Z_S\,|\,\F^{S,i^*}_t] \big|^2 \le \E H(P_S[t]\,|\,\mu[t]) \le H^{k+\ell}_t-H^{k+\ell-1}_t \le H^{k+\bar\ell}_t-H^k_t,
\end{align*}
where we used the chain rule \eqref{def:chainrule} and the monotonicity of entropy \eqref{ineq:dataproc2} in the last two steps.
Using the trivial bound $|(1,n]^\ell_{\neq} \setminus   (1,k]_{\neq}^\ell| \le |(1,n]^\ell_{\neq}| = (n-1)_\ell$, we get
\begin{align}
R_2 &\le 4k \bar{s}_0^2 \big( H^{k+\bar\ell}_t - H^k_t\big). \label{pfinfrange-R2bound}
\end{align}

\subsection{Iterating}

Combine \eqref{pf:infrange-ent1} with \eqref{pfinfrange-R1bound} and \eqref{pfinfrange-R2bound} to get
\begin{align*}
\frac{d}{dt}H^k_t  &\le a_k + c_k \big( H^{k+\bar\ell}_t - H^k_t\big),
\end{align*}
where we define
\begin{align*}
a_k &:= k\bar{s}_0^2\tailfn_0^2(\bar\ell) + \frac{8k}{n^2}\bar{s}_2^2 + \frac{4k^3}{n^2} \bar{s}_0^2, \qquad c_k := 4k\bar{s}_0^2.
\end{align*}
By Gronwall's inequality, we get
\begin{align*}
H^k_t &\le e^{-c_kt}H^k_0 + \int_0^t e^{-c_k(t-s)}\left(  a_k + c_k H^{k+\bar\ell}_s \right)ds, \qquad t \in [0,T].
\end{align*}
For $\bar{r} \ge 1$ satisfying $k+ \bar{r} \bar\ell < n/2$, we may iterate this $\bar{r}$ times to get
\begin{align}
H^k_T &\le \sum_{r=0}^{\bar{r}-1} B_k^r(T)H^{k+r\bar\ell}_0 + \sum_{r=0}^{\bar{r}-1} \frac{a_{k+r\bar\ell}}{c_{k+r\bar\ell}} A_k^r(T) + A_k^{\bar{r}-1}(T)H^{k+\bar{r}\bar\ell}_T, \label{pf:infrange-key1}
\end{align}
where we define
\begin{align*}
A_k^r(t_0) &:= \bigg(\prod_{j=0}^{r}c_{k+j\bar\ell} \bigg) \int_0^{t_0}\int_0^{t_1}\cdots\int_0^{t_r} e^{- \sum_{j=0}^{r} c_{k+j\bar\ell}(t_j - t_{j+1}) } dt_{r+1}\cdots dt_{2}dt_{1}, \\
B_k^\ell(t_0) &= \bigg(\prod_{j=0}^{r-1}c_{k+j\bar\ell} \bigg) \int_0^{t_0}\int_0^{t_{1}}\cdots\int_0^{t_{r-1}} e^{ - c_{k+r\bar\ell}t_{r} - \sum_{j=0}^{r-1} c_{k+j\bar\ell}(t_j - t_{j+1}) } dt_{r}\cdots dt_{2}dt_{1},
\end{align*}
for $t_k \ge 0$, with $B_k^{k}(t) := e^{-c_k t}$. 

Next, note that 
\begin{align*}
\frac{a_{k}}{c_{k}} &= \frac14 \tailfn_0^2(\bar\ell) + \frac{2\bar{s}_2^2}{n^2\bar{s}_0^2} + \frac{ k^2 }{n^2 }.
\end{align*}
Use the assumption \eqref{asmp:infrange-init} to control the first term in \eqref{pf:infrange-key1}, along with the estimate \eqref{pf:Hnbound-infrange} for the last term, to get
\begin{align}
\begin{split}
H^k_T &\le \frac{C_0}{n^2}\sum_{r=0}^{\bar{r}-1} (k+r\bar\ell)^3 B_k^r(T) + \left(\frac14 \tailfn_0^2(\bar\ell) + \frac{2\bar{s}_2^2}{n^2\bar{s}_0^2}\right)\sum_{r=0}^{\bar{r}-1} A_k^r(T) \\
	&\quad  + \frac{1}{n^2 }\sum_{r=0}^{\bar{r}-1}  (k+r\bar\ell)^2 A_k^r(T) + A_k^{\bar{r}-1}(T)n\left(C_0 + T\bar{s}_0^2/2 \right).
\end{split} \label{pf:infrange-key3}
\end{align}
Note that this holds for integers $\bar\ell,\bar{r} \ge 1$ satisfying $k+ \bar{r} \bar\ell \le n/2$.

Next, apply Proposition \ref{pr:ABestimate} with $a= 4\bar{s}_0^2 k$ and $b= 4\bar{s}_0^2\bar\ell$ to get
\begin{align*}
A_k^{\bar{r}-1}(T) &\le \exp\left(-2(\bar{r}+(k/\bar\ell)+1)\left(e^{-4\bar{s}_0^2 \bar\ell T} - \frac{(k/\bar\ell)}{\bar{r}+(k/\bar\ell)} \right)_+^2 \right), 
\end{align*}
as well as
\begin{align*}
\sum_{r=0}^{\infty} (k+r\bar\ell)^2 A_k^r(T) &\le \frac{e^{12 \bar{s}_0^2 \bar\ell T} - 1}{3} (k/\bar\ell)((k/\bar\ell)+1)((k/\bar\ell)+2) \le \frac{2k^3}{\bar\ell}e^{12 \bar{s}_0^2 \bar\ell T}, \\
\sum_{r=0}^\infty A_k^r(T) &\le (k/\bar\ell)  e^{4\bar{s}_0^2 \bar\ell T}, \\
\sum_{r=0}^{\infty} (k+r\bar\ell)^3 B_k^r(T) &\le  4\bar{s}_0^2 k ((k/\bar\ell)+1)((k/\bar\ell)+2) e^{12\bar{s}_0^2 \bar\ell T} \le 24\bar{s}_0^2 k^2 e^{12\bar{s}_0^2 \bar\ell T} .
\end{align*}
We used $(k/\bar\ell) \le k$ in the first and last lines.
Apply these inequalities in \eqref{pf:infrange-key3} to get
\begin{align*}
H^k_T &\le 24 C_0\bar{s}_0^2\frac{k^3}{n^2} e^{12\bar{s}_0^2 \bar\ell T} + \frac{2k^3}{\bar\ell n^2} e^{12\bar{s}_0^2 \bar\ell T} + \left(\frac14 \tailfn_0^2(\bar\ell) + \frac{2\bar{s}_2^2}{n^2\bar{s}_0^2}\right)  \frac{k}{\bar\ell} e^{4\bar{s}_0^2 \bar\ell T} \\
	&\quad +  n\left(C_0 +  \bar{s}_0^2 T/2  \right)\exp\bigg(-2(\bar{r}+(k/\bar\ell)+1)\bigg(e^{-4 \bar{s}_0^2  \bar\ell T} - \frac{(k/\bar\ell)}{\bar{r}+(k/\bar\ell) } \bigg)_+^2 \bigg).
\end{align*}
Note that $1 \le (\bar{s}_2/\bar{s}_0)^2$.
Choose $\bar{r} = \lfloor ((n/2)-k)/\bar\ell \rfloor$, so that $k+\bar{r}\bar\ell \le n/2$ as was required above. Then $\bar{r} + 1 \ge ((n/2)-k)/\bar\ell$, and we get 
\begin{align}
\begin{split}
H^k_T &\le 24 C_0\bar{s}_0^2\frac{k^3}{n^2} e^{12\bar{s}_0^2 \bar\ell T} +  4 \frac{ \bar{s}_2^2 }{\bar{s}_0^2} \frac{k^3}{\bar\ell n^2} e^{12\bar{s}_0^2 \bar\ell T}  + \frac{k}{\bar\ell} \tailfn_0^2(\bar\ell) e^{4\bar{s}_0^2 \bar\ell T} \\
	&\quad +  n\left(C_0 + \bar{s}_0^2 T  \right)\exp\bigg(-  \frac{n}{\bar\ell} \bigg(e^{-4 \bar{s}_0^2 \bar\ell T} - \frac{k}{2n } \bigg)_+^2 \bigg).
\end{split} \label{pf:infrange-mainbound}
\end{align}
This completes the proof of Theorem \ref{th:intro-infrange}. \hfill \qedsymbol

\begin{remark} \label{re:inf-selfimprovement}
Having now an estimate on $H^k_T$, one might return to the estimate of $R_1$ and estimate the cross-terms more effectively upon expanding the square, as we did in Theorem \ref{th:intro-dynamic}. This may allow an improvement of the $k^3$ term in \eqref{pf:infrange-mainbound} to at best $k^2$, but the dependence on $\bar\ell$ in \eqref{pf:infrange-mainbound} makes this intricate.
\end{remark}

\appendix

\section{Proof of Lemma \ref{le:entropy-diffusions}} \label{ap:entropyestimates}

The claim (ii) follows immediately from \cite[Theorem 2.3]{leonard2012girsanov}, after noting that the \emph{uniqueness condition} assumed therein is implied by our notion of well-posedness given in Definition \ref{def:SDEwellposed}.
Let us prove first that (iii) follows from (i) and (ii). To see this, note first that the assumption \eqref{asmp:finiteentropy2} is stronger than \eqref{asmp:finiteentropy1}, so the conclusion of (i) holds. Note that $H(P^1[t]\,|\,P^2[t]) \ge H(P^1_0\,|\,P^2_0)$ for all $t \in [0,T]$, by the data processing inequality (recalled at the beginning of Section \ref{se:entropyestimates}). Hence, if $H(P^1_0\,|\,P^2_0)=\infty$, then $H(P^1[t]\,|\,P^2[t])=\infty$, and \eqref{eqn:entropyidentity1} holds trivially. If $H(P^1_0\,|\,P^2_0)<\infty$, then we deduce from (i) and the assumption \eqref{asmp:finiteentropy2} that $H(P^1\,|\,P^2) < \infty$, and we are then in a position to apply (ii).

Hence, it remains to prove (i), and it suffices to do so for $t=T$. The argument is similar to \cite[Proposition 1]{lehec2013representation}, which covers the case $b^2\equiv 0$. Let $X=(X_t)_{t \in [0,T]}$ denote the canonical process (coordinate map) on $\C_T^k$, and let $\FF=(\F_t)_{t \in [0,T]}$ denote its natural (unaugmented) filtration. Abbreviate $b^i_t:=b^i(t,X)$. Under $P^i$, the process
\begin{align*}
W^i_t := X^i_t - X^i_0 - \int_0^t b^i_s\,ds
\end{align*}
is a Brownian motion.
Suppose first that $\int_0^T|b^1_t-b^2_t|^2dt \in L^\infty(P^1)$. By Novikov's criterion and Girsanov's theorem \cite[Corollary 3.5.13 and Theorem 3.5.1]{karatzas-shreve}, the process
\begin{align*}
M_t := \exp\left(\int_0^t(b^2_s-b^1_s) \cdot dW^1_s - \frac12\int_0^t|b^1_s-b^2_s|^2ds\right), \quad t \in [0,T],
\end{align*}
is then a martingale under $P^1$, and the process
\begin{align*}
\widetilde{W}^1_t := W^1_t + \int_0^t(b^1_s-b^2_s) \, ds, \quad t \in [0,T],
\end{align*}
defines a Brownian motion under the probability measure defined by $M_T dP^1$. By well-posedness of the SDE$(b^2)$, we have $M_T dP^1 = dP^2$. Thus, letting $\E^i$ denote expectation under $P^i$, we have
\begin{align*}
H(P^1\,|\,P^2) = -\frac12 \E^1[\log M_T] = \frac12 \E^1\int_0^T|b^1_t-b^2_t|^2\,dt,
\end{align*}
proving the claim.

To handle the general case, define the stopping times
\begin{align*}
\tau_n := T \wedge \inf\left\{t \in [0,T] : \int_0^t|b^1_s-b^2_s|^2ds \ge n\right\},
\end{align*}
where $\inf\emptyset := \infty$ as usual.
By well-posedness of the SDE$(b^2)$, there exists for each $(t,z) \in [0,T) \times \C^k_T$ a unique in law solution of the SDE
\begin{align*}
d\widehat{X}_s &= b^2(s,\widehat{X})ds + d\widehat{W}_s,  \text{ for } s \in (t,T], \quad \text{ and } \quad \widehat{X}_s = z_s,  \text{ for } s \in [0,t],
\end{align*}
and we write $P^2_{t,z} \in \P(\C_T^k)$ for its law. Set also $P_{T,z}:=\delta_z$ for $z \in \C_T^d$. With $M$ defined as above, the process $M_{\cdot \wedge \tau_n}$ is a martingale under $P^1$.
The process
\begin{align*}
W^1_t + \int_0^{t \wedge \tau_n}(b^1_s-b^2_s)ds, \quad t \in [0,T],
\end{align*}
is a Brownian motion under the probability measure defined by $M_{\tau_n} dP^1$.
Define $\F_{\tau_n} := \sigma(X_{s \wedge \tau_n} : s \in [0,T])$. (Note by \cite[Lemma 1.3.3]{stroock-varadhan} that this agrees with the usual definition of filtration at a stopping time.)
It is tempting to claim that the well-posedness of the SDE$(b^2)$ implies that $M_{\tau_n} dP^1 = dP^2$ on $\F_{\tau_n}$, but this is not immediate; under $M_{\tau_n} dP^1$, the canonical process $X$ solves the SDE($b^2$) but only on the interval $[0,\tau_n]$, not all of $[0,T]$, and we must prove that uniqueness carries over to this random sub-interval.
To do this, we follow  \cite[Theorem 6.1.2]{stroock-varadhan},
to construct a probability measure on $\C_T^k$ which essentially follows $M_{\tau_n} dP^1$ up to time $\tau_n$ and then switches to follow the SDE($b^2$) thereafter. As a consequence of Lemma \ref{le:Ptx-measurability} below, $P^2_{\tau_n,X_{\cdot \wedge \tau_n}}(A)$ is $\F_{\tau_n}$-measurable for each Borel set $A \subset \C_T^k$. We may then uniquely define $Q_n \in \P(\C_T^k)$ by requiring that $dQ_n=M_{\tau_n} dP^1$ on $\F_{\tau_n}$ and that $Q_n(A \,|\,\F_{\tau_n}) = P^2_{\tau_n,X_{\cdot \wedge \tau_n}}(A)$ a.s.\ for each Borel set $A$. Then, using the final conclusion of \cite[Theorem 6.1.2]{stroock-varadhan}, one may check using L\'evy's criterion that
\begin{align*}
X_t - \int_0^t b^2(s,X)\,ds, \quad t \in [0,T],
\end{align*}
is a Brownian motion under $Q_n$. This implies $Q_n=P^2$, and in particular $M_{\tau_n} dP^1=dQ_n=dP^2$ on $\F_{\tau_n}$. Hence,
\begin{align*}
H(P^1|_{\F_{\tau_n}} \,|\, P^2|_{\F_{\tau_n}}) &= -\E^1[\log M_{\tau_n}] = \frac12\E^1\int_0^{\tau_n}|b^1_t - b^2_t|^2dt \le \frac12\E^1\int_0^T|b^1_t - b^2_t|^2dt.
\end{align*}
The assumption \eqref{asmp:finiteentropy1} implies that $P^i(\tau_n \to T)=1$, which in turn  implies that $\|P^i|_{\F_{\tau_n}} - P^i\|_{\mathrm{TV}}\to 0$, for $i=1,2$. Send $n\to\infty$ and use lower semicontinuity of relative entropy. \hfill \qedsymbol

\begin{lemma} \label{le:Ptx-measurability}
Let $b : [0,T] \times \C_T^k \to \R^k$ be progressively measurable, and suppose the SDE$(b)$ is well-posed in the sense of Definition \ref{def:SDEwellposed}. 
For each  $(t,z) \in [0,T) \times \C^k_T$, let $P_{t,z} \in \P(\C_T^k)$ denote the law of the unique solution of the SDE
\begin{align*}
dX_s &= \overline{b}(s,X)ds + dW_s,  \text{ for } s \in (t,T], \quad \text{ and } \quad X_s = z_s,  \text{ for } s \in [0,t],
\end{align*}
Let $P_{T,z}:=\delta_z$ for $z \in \C_T^d$.
Then the map $[0,T] \times \C_T^k \ni (t,z) \mapsto P_{t,z} \in \P(\C_T^k)$ is progressively measurable, with $\P(\C_T^k)$ equipped with the Borel $\sigma$-field of the topology of weak convergence.
\end{lemma}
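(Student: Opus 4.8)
The plan is to prove the measurability statement of Lemma \ref{le:Ptx-measurability} by a standard Stroock--Varadhan-style argument: the set of points $(t,z)$ admitting a solution of the given martingale problem is measurable, the solution map is single-valued by the well-posedness assumption (Definition \ref{def:SDEwellposed}), and a single-valued selection of a measurable multifunction with closed graph is measurable. Concretely, I would first recall from \cite[Chapter 6]{stroock-varadhan} that a measure $Q \in \P(\C_T^k)$ is $P_{t,z}$ if and only if $Q(X_s = z_s,\ 0\le s\le t)=1$ and, for every $\varphi \in C_c^\infty(\R^k)$ and every $0\le t \le u \le v \le T$ and every bounded $\F_u$-measurable continuous $h$ on $\C_T^k$,
\begin{align*}
\int_{\C_T^k} h(x)\Big[\varphi(x_v)-\varphi(x_u) - \int_u^v\big(\overline{b}(r,x)\cdot\nabla\varphi(x_r) + \tfrac12\Delta\varphi(x_r)\big)\,dr\Big]\,Q(dx) = 0.
\end{align*}
Since progressive measurability of $\overline b$ makes the bracketed expression an $\F_v$-measurable (and $dr$-integrable) function of $x$, each such constraint defines a Borel-measurable functional $\Phi_{\varphi,u,v,h}:\P(\C_T^k)\to\R$ (using that weak convergence controls bounded continuous test functionals, together with a monotone-class / dominated-convergence argument to handle the unbounded path-integral term — here I would truncate $\overline b$ and use that the martingale-problem formulation can be tested against bounded functionals). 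The graph
\[
\mathcal G := \{(t,z,Q) : Q = P_{t,z}\} = \{(t,z,Q): Q(x_s = z_s\ \forall s\le t)=1\}\cap\bigcap \Phi_{\varphi,u,v,h}^{-1}(\{0\})
\]
is then a Borel subset of $[0,T]\times\C_T^k\times\P(\C_T^k)$, the intersection being over a countable dense family of $(\varphi,u,v,h)$ (density being enough because both sides depend continuously on these parameters).

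Second, by the well-posedness assumption, for each $(t,z)$ there is exactly one $Q$ with $(t,z,Q)\in\mathcal G$; i.e.\ $\mathcal G$ is the graph of the genuine function $(t,z)\mapsto P_{t,z}$ (the boundary case $t=T$ with $P_{T,z}=\delta_z$ being handled separately and obviously continuously). A Borel function whose graph is Borel and whose domain is all of a Polish space is itself Borel measurable — this is the Kuratowski--Ryll-Nardzewski / closed-graph-type selection fact; alternatively, one can argue directly that for any open $U\subseteq\P(\C_T^k)$, the set $\{(t,z): P_{t,z}\in U\}$ equals the projection onto $[0,T]\times\C_T^k$ of $\mathcal G\cap([0,T]\times\C_T^k\times U)$, and since the fibers are singletons this projection is Borel by the Lusin--Novikov theorem on countable-to-one projections. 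So $(t,z)\mapsto P_{t,z}$ is Borel.

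Third, to upgrade Borel measurability to progressive measurability: I would observe that $P_{t,z}$ depends on $z$ only through $z|_{[0,t]}$, because the SDE freezes the path on $[0,t]$ and the restriction of a progressively measurable drift to $[t,T]$ sees only the past. Thus $P_{t,z}=P_{t,z'}$ whenever $z|_{[0,t]}=z'|_{[0,t]}$, and combined with Borel measurability this is exactly the definition of progressive measurability given in Section \ref{se:pathspacenotation}. I expect the main obstacle to be the bookkeeping in Step one: justifying rigorously that the path-dependent drift term $\int_u^v \overline b(r,x)\cdot\nabla\varphi(x_r)\,dr$, which is only a.e.-defined and not bounded, can be incorporated into a Borel functional on $\P(\C_T^k)$ — the clean route is to note this integral is well-defined and finite $Q$-a.s.\ for any candidate solution $Q$ (it is part of what ``solution'' means), approximate $\overline b$ by bounded progressively measurable truncations $\overline b\wedge m$, write $\Phi$ as a pointwise limit of the bounded-drift functionals $\Phi^m$ on the solution set, and check the limit is Borel; the truncation error is controlled on the solution set precisely because a solution has the requisite local integrability.
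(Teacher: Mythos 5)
Your overall strategy --- show the graph $\mathcal{G}=\{(t,z,P_{t,z})\}$ is Borel, invoke the Borel-graph theorem for functions between Polish spaces, and derive progressive measurability from the non-anticipativity supplied by well-posedness --- is exactly the paper's. The divergence, and the gap, is in the characterization of $\mathcal{G}$.

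Your martingale-problem functionals $\Phi_{\varphi,u,v,h}(Q)=\int h(x)\big[\varphi(x_v)-\varphi(x_u)-\int_u^v(\overline b\cdot\nabla\varphi+\tfrac12\Delta\varphi)\,dr\big]\,Q(dx)$ are not defined on all of $\P(\C_T^k)$: the paper's notion of solution requires only $\int_0^T|\overline b(s,X)|\,ds<\infty$ almost surely, not $L^1(Q)$-integrability of $\overline b\cdot\nabla\varphi$, so the global martingale property you test is the wrong one. Your proposed fix --- truncate to $\overline b\wedge m$ and take a pointwise limit of $\Phi^m$ ``on the solution set'' --- is circular (the solution set is precisely what you are trying to cut out), and even setting that aside it does not work: a.s.\ finiteness of $\int|\overline b|\,dr$ gives no dominating function, so $\Phi^m(Q)\to\Phi(Q)$ can fail even for the actual solution. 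A correct repair along your lines would localize, replacing the global martingale conditions by their stopped versions at $\tau_N=\inf\{s\ge t:\int_t^s|\overline b(r,x)|\,dr\ge N\}\wedge T$ for each $N\in\N$; the stopped integrand is bounded, each condition is Borel in $(t,z,Q)$, and the resulting countable family still singles out $P_{t,z}$. The paper sidesteps the issue by a different and cleaner device: it introduces the Borel map $\Phi_t(x)(s):=\sqrt{T/(T-t)}\big(x_{t+s(T-t)/T}-x_t-\int_t^{t+s(T-t)/T}\overline b(r,x)\,dr\big)$ (the time-rescaled driving noise) and characterizes $\mathcal{G}$ by requiring that $Q$ fixes the path on $[0,t]$, that $Q\circ\Phi_t^{-1}$ equals Wiener measure, and that $\Phi_t$ has independent increments under $Q$ --- each condition expressible through countably many bounded continuous test functions, with the unbounded drift confined inside the fixed Borel map $\Phi_t$, so no moment assumption on $\overline b$ under $Q$ is needed.
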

\begin{proof}
It follows from well-posedness that $P_{t,z}$ is non-anticipative in the sense that $P_{t,z} =P_{t,z'}$ for any $t \in [0,T]$  and $z,z' \in \C_T^k$ satisfying $z'|_{[0,t]}=z|_{[0,t]}$. We must show also that $(t,z) \mapsto P_{t,z}$ is Borel measurable.
It suffices by \cite[Theorem 14.12]{kechris2012classical} to show the Borel measurability of the graph
\begin{align*}
G := \left\{(t,x,P_{t,x}) : t \in [0,T], \, x \in \C_T^k\right\} \subset [0,T] \times \C_T^k \times \P(\C_T^k).
\end{align*} 
For each $t \in [0,T)$, define a map $\Phi_t : \C_T^k \to \C_T^k$ by
\begin{align*}
\Phi_t(x)(s) := \sqrt{\frac{T}{T-t}}\bigg(x_{t+s\frac{T-t}{T}}-x_t - \int_t^{t+s\frac{T-t}{T}} \overline{b}(r,x)\,dr\bigg), \quad s \in [0,T].
\end{align*}
Fix countable convergence-determining sets $\{f_n : n \in \N\}$ and $\{g_n : n \in \N\}$ of bounded continuous functions $f_n : \C_T^k \to \R$ and $g_n : \R^k \to \R$.  
Define $S(t,x)$ for each $(t,x)\in[0,T) \times \C_T^k$ as the set of $Q \in \P(\C_T^k)$ satisfying 
\begin{enumerate}[(a)]
\item $\int_{\C_T^k} f_n(z_{\cdot \wedge t})\, Q(dz) = f_n(x_{\cdot \wedge t})$ for all $n \in \N$,
\item $Q \circ \Phi_t^{-1}$ equals Wiener measure on $\C_T^k$, and
\item For every $n,m \in \N$ and rational numbers $s,u \in [t,T]$ with $s < u$,
\begin{align*}
\int_{\C_T^d}  & f_n(z_{\cdot \wedge s}) g_m ( \Phi_t(z)(r)-\Phi_t(z)(s))\,Q(dz) \\
	&= \int_{\C_T^k} f_n(z_{\cdot \wedge s})\,Q(dz) \int_{\C_T^d}g_m ( \Phi_t(z)(r)-\Phi_t(z)(s))\,Q(dz).
\end{align*}
\end{enumerate}
Let $S(T,x)=\{\delta_x\}$ for $x \in \C_T^k$.
Note that (a) is equivalent to the statement that $Q\{z \in \C_T^k : z_s=x_s \ \forall s \le t\}=1$, whereas (b) and (c) express the statement that $\Phi_t$ is a Brownian motion under $Q$, as it has the law of Wiener measure and independent increments with respect to the filtration generated by the coordinate process on $\C_T^k$. The well-posedness assumption that $P_{t,x}$ is in fact the \emph{unique} element of $S(t,x)$, for each $(t,x)$. The graph $G$ is thus precisely the set of $(t,x,Q) \in [0,T] \times \C_T^d \times \P(\C_T^d)$ satisfying the constraints specified in (a--c).
Note that $(t,x) \mapsto \Phi_t(x)$ is Borel measurable, and $\int_{\C_T^k} f_n(z_{\cdot \wedge t})\, Q(dz)$ and $f_n(x_{\cdot \wedge t})$ are continuous in $(t,x,Q)$. Thus, $G$ can be expressed as the intersection of countably many inverse images of Borel sets under various Borel maps, and we deduce that $G$ is Borel.
\end{proof}

\section{Proof of Lemma \ref{le:integ-transp-equiv}} \label{ap:transp-proof}

We first prove the implication (b) $\Rightarrow$ (a), with $\transpconst = 2(1+R)/\kappa$. Let $\nu \in \P(\C_T^d)$ and $(t,x) \in [0,T] \times \C_T^d$. For any unit vector $u \in \R^d$, we apply the weighted Pinsker inequality \eqref{ineq:weightedPinsker} with $f(y) :=  \kappa^{1/2} \, \left(b(t,x,y)-\langle\mu,b(t,x,\cdot)\rangle\right)$ to get
\begin{align*}
\left|  \langle \mu-\nu, b(t,x,\cdot)\rangle \right|^2 \le \frac{2}{\kappa} \left(1 + \log \int e^{ \kappa \left| b(t,x,y)-\langle\mu,b(t,x,\cdot)\rangle  \right|^2 }\,\mu(dy)\right) H(\nu\,|\,\mu).
\end{align*}
This is exactly (a).

We next prove that (a) $\Rightarrow$ (b), using a form of Marton's argument \cite{marton1986simple}. Fix $t \in (0,T)$ and $x \in \C_T^d$. Fix a coordinate $i\in \{1,\ldots,d\}$. Let $f_i(y)=b_i(t,x,y)-\langle\mu,b_i(t,x,\cdot)\rangle$ for $y \in \C_T^d$, noting that $\langle \mu,f_i\rangle=0$. For any Borel set $A \subset \C_T^d$ with $\mu(A) > 0$, we may apply (a) with $\nu \in \P(\C_T^d)$ defined by  $d\nu/d\mu=1_A/\mu(A)$ to get
\begin{align*}
\left|\frac{1}{\mu(A)}\int_A f_i\,d\mu \right|^2 = |\langle \nu-\mu,f_i\rangle|^2 &\le \transpconst H(\nu\,|\,\mu) = - \transpconst \log \mu(A).
\end{align*}
(Note that $f_i \in L^1(\nu)$ because $b(t,x,\cdot) \in L^1(\mu)$.)
We claim that $\mu(f_i \ge r) \le e^{-r^2/\transpconst}$ for all $r > 0$. This is trivially true if $\mu(f_i \ge r) = 0$.
If $\mu(f_i \ge r) > 0$, then applying the above with $A= \{f_i \ge r\}$ yields
\begin{align*}
 r^2 \le \bigg|\frac{1}{\mu(f_i \ge r)}\int_{\{f_i \ge r\}} f_i\,d\mu \bigg|^2 \le - \transpconst \log \mu(f_i \ge r).
\end{align*}
Similarly, $\mu(f_i \le -r) \le e^{-r^2/\transpconst}$. By a union bound,
\begin{align*}
\mu\big( |b(t,x,\cdot)-\langle\mu,b(t,x,\cdot)\rangle| \ge r\big) \le \sum_{i=1}^d \mu(|f_i|^2 \ge r^2/d) \le 2d e^{-r^2/\transpconst d}.
\end{align*}
Integrating this tail bound leads to (b), with constants $(\kappa,R)$ depending only on $(\transpconst,d)$. \hfill\qedsymbol

\section{Quadratic transport inequalities for Lipschitz SDEs}

The following proposition extends a well known idea to the case of random initial positions. Prior results, for non-random initial positions, may be found in \cite{djellout2004transportation,pal2012concentration,ustunel2012transportation}.

\begin{proposition} \label{pr:T2-randominit}
Let $k \in \N$. Suppose $b : [0,T] \times \C^k_T \to \R^k$ is progressively measurable, and 
\begin{align*}
\sup_{t \in [0,T]}|b(t,x)-b(t,y)| \le L\|x-y\|_t, \qquad \forall x,y \in \C^k_T.
\end{align*}
Assume also that $M_0 := \int_0^T|b(t,0)|^2dt < \infty$.
Let $P_0 \in \P(\R^k)$ have finite second moment, and let $P \in \P(\C^k_T)$ be the law of the unique solution of the SDE
\begin{align*}
dX_t = b(t,X)dt +  dW_t, \qquad X_0 \sim P_0.
\end{align*}
If there exists $C_0 < \infty$ such that
\begin{align}
\W_2^2(P_0,\nu) \le C_0 H(\nu\,|\,P_0), \quad \forall \nu \in \P(\R^k), \label{ap:asmp:Tineq_0}
\end{align}
then we have
\begin{align*}
\W_2^2(P,Q) \le 3(C_0 \vee 2T)e^{3TL^2} H(Q\,|\,P), \quad \forall Q \in \P(\C^k_T)
\end{align*}
\end{proposition}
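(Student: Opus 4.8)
The plan is to use the classical coupling/Girsanov argument for quadratic transport inequalities of SDEs (as in \cite{djellout2004transportation,ustunel2012transportation}), but carrying the randomness of the initial position through the estimate. Fix $Q \in \P(\C^k_T)$ with $H(Q\,|\,P) < \infty$ (otherwise there is nothing to prove). First I would reduce to the Girsanov picture: since the SDE$(b)$ is well-posed and $Q \ll P$, the process $\beta_t := \dot{X}_t - b(t,X)$ (made rigorous via the martingale part) is, under $Q$, an adapted drift so that $W^Q_t := X_t - X_0 - \int_0^t (b(s,X)+\beta_s)\,ds$ is a $Q$-Brownian motion, and the entropy decomposes by the chain rule as
\begin{align*}
H(Q\,|\,P) = H(Q_0\,|\,P_0) + \frac12\,\E^Q\!\int_0^T |\beta_s|^2\,ds.
\end{align*}
This is exactly Lemma \ref{le:entropy-diffusions}(ii) applied conditionally on the initial point, or equivalently Girsanov plus the chain rule for relative entropy \eqref{def:chainrule}.

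Next I would build the coupling. Using \eqref{ap:asmp:Tineq_0}, pick a coupling $(\xi,\xi')$ of $(P_0,Q_0)$ with $\E|\xi-\xi'|^2 = \W_2^2(P_0,Q_0) \le C_0 H(Q_0\,|\,P_0)$; realize on a common space a $Q$-Brownian motion $W^Q$, the process $X' := X$ solving the original SDE driven by $W^Q$ from $\xi$, and the process $X$ (the canonical $Q$-process) solving $dX_t = (b(t,X)+\beta_t)dt + dW^Q_t$ from $\xi'$. (Some care is needed to set this up on one probability space; this is standard.) Then $(X,X')$ is a coupling of $(Q,P)$, so $\W_2^2(P,Q) \le \E\|X-X'\|_T^2$. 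Now estimate pathwise: for $u \le t$,
\begin{align*}
|X_u - X'_u| \le |\xi-\xi'| + \int_0^u |b(s,X)-b(s,X')|\,ds + \int_0^u |\beta_s|\,ds \le |\xi-\xi'| + L\int_0^u \|X-X'\|_s\,ds + \int_0^T|\beta_s|\,ds.
\end{align*}
Taking suprema, squaring, using $(a+b+c)^2 \le 3(a^2+b^2+c^2)$, bounding $(\int_0^T|\beta_s|\,ds)^2 \le T\int_0^T|\beta_s|^2 ds$ by Cauchy–Schwarz, and applying Gronwall's inequality in the (deterministic-coefficient) integral inequality $\|X-X'\|_t^2 \le 3|\xi-\xi'|^2 + 3T\int_0^T|\beta_s|^2ds + 3L^2\int_0^t \|X-X'\|_s^2 ds$ gives
\begin{align*}
\E\|X-X'\|_T^2 \le 3 e^{3TL^2}\Big(\E|\xi-\xi'|^2 + T\,\E\!\int_0^T|\beta_s|^2\,ds\Big) \le 3e^{3TL^2}\Big(C_0 H(Q_0\,|\,P_0) + 2T\cdot\tfrac12\E^Q\!\int_0^T|\beta_s|^2 ds\Big).
\end{align*}
Bounding both $C_0$ and $2T$ by $C_0 \vee 2T$ and recombining via the entropy decomposition yields $\W_2^2(P,Q) \le 3(C_0\vee 2T)e^{3TL^2} H(Q\,|\,P)$, as desired. (The hypothesis $M_0 < \infty$, together with $P_0$ having finite second moment, guarantees $P$ has finite second moment so that $\W_2$ is finite and the manipulations are legitimate.)

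The main obstacle is the rigorous construction of the coupling on a single probability space: one must simultaneously couple the initial conditions optimally \emph{and} run both the reference SDE and the $\beta$-perturbed SDE against the same Brownian motion, while ensuring that the perturbed process has law exactly $Q$. The clean way is to work on the canonical space $\C^k_T$ under $Q$, extract $\beta$ from the Girsanov/semimartingale decomposition of $X$ under $Q$ (well-posedness of SDE$(b)$ is what makes $\beta$ and $W^Q$ well-defined), then enlarge the space to carry an independent copy of $P_0$ coupled optimally to $Q_0$ and solve the reference SDE$(b)$ from it using $W^Q$; pathwise uniqueness for the Lipschitz SDE$(b)$ makes this solution a genuine version of $P$. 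Everything else — Gronwall, Cauchy–Schwarz, the chain rule for entropy — is routine.
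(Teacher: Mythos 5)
Your proposal is correct and follows essentially the same route as the paper's proof: Girsanov plus the entropy chain rule to split $H(Q\,|\,P)$ into $H(Q_0\,|\,P_0)$ and the drift term, an optimal coupling of the initial laws realized on an enlargement of the canonical space under $Q$, a synchronous coupling of the reference SDE (solved strongly from the coupled initial point, using Lipschitz pathwise uniqueness) against the $Q$-Brownian motion, and Gronwall. The technical device you flag and resolve (extracting $\alpha$/$\beta$ via martingale representation on the canonical space and enlarging to carry the coupled initial condition) is exactly the paper's construction, and your constant bookkeeping matches the paper's.
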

\begin{proof}
We follow the ideas of \cite[Section 5]{djellout2004transportation} and \cite{ustunel2012transportation}, with some adaptations to account for the random initial state.
Let $X$ denote the canonical process on $\C^k_T$, so that 
\begin{align}
W_t := X_t - X_0 - \int_0^t b(s,X)\,ds, \quad t \in [0,T],  \label{pf:T2-B1}
\end{align}
defines a $P$-Brownian motion.
Let $\F_t=\sigma(X_s:s \le t)$ denote the canonical filtration on $\C^k_T$.
Let $Q \in \P(\C^k_T)$ satisfy $H(Q\,|\,P) < \infty$. We may assume that $dQ/dP$ is strictly positive; indeed, we may first study $dQ_\epsilon/dP_\epsilon = (\epsilon+1)^{-1}(\epsilon + dQ/dP)$ and then send $\epsilon \downarrow 0$ to deduce the general case from the strictly positive case, as in \cite[Proof of Theorem 9.1]{ustunel2012transportation}.

Let $\pi$ denote an optimal coupling for $\W_2^2(P_0,Q_0)$. We may identify $\pi$ as the pushforward of $Q_0 \otimes \mathrm{Leb}$ through the map $\R^d \times [0,1] \ni (x,u) \mapsto (\xi(x,u),x) \in \R^d \times \R^d$, where $\xi : \R^k \times [0,1] \to \R^k$ is a measurable function, and $\mathrm{Leb}$ denotes the uniform measure on $[0,1]$.

A standard argument using the martingale representation theorem (see \cite[Theorem 2.1]{leonard2012girsanov} for a general version) shows that there exists a progressively measurable process $\alpha : [0,T] \times \C^k_T \to \R^k$ such that $\E^P\int_0^T|\alpha(t,X)|^2dt < \infty$, and 
\begin{align*}
\frac{dQ}{dP} &= \frac{dQ_0}{dP_0}(X_0)\, \exp\left(\int_0^T\alpha(t,X) \cdot dW_t - \frac12 \int_0^T|\alpha(t,X)|^2dt\right), \ \ a.s.
\end{align*}
Letting $(Q^x)_{x \in \R^k}$ and $(P^x)_{x \in \R^k}$ denote the conditional measures of $Q$ and $P$ given the initial position, we have also
\begin{align*}
\frac{dQ^x}{dP^x} &= \exp\left(\int_0^T\alpha(t,X) \cdot dW_t - \frac12 \int_0^T|\alpha(t,X)|^2dt\right), \ \ a.s., \ \ x \in \R^k.
\end{align*}
Under $Q$, the process
\begin{align}
\widehat{W}_t := W_t - \int_0^t \alpha(s,X)ds, \quad t \in [0,T], \label{pf:T2-B2}
\end{align}
is a Brownian motion. Now, enlarge the probability space by setting $\Omega = \C^k_T \times [0,1]$, equipped with the probability measure $\overline{Q}=Q \otimes \mathrm{Leb}$ and the filtration $\overline{\F}_t := \F_t \otimes \B[0,1]$, where $\B[0,1]$ denotes the Borel $\sigma$-field on $[0,1]$.
All random variables defined on $\C^k_T$ extend in the obvious way to $\Omega$. Let $U:\Omega \to [0,1]$ denote the second coordinate map.
On this enlarged space, $\widehat{W}$ remains a $\overline{Q}$-Brownian motion. Let $\xi=\xi(X_0,U)$ be the map given in the paragraph above, noting that $\xi$ is $\overline{\F}_0$-measurable and satisfies
\begin{align*}
\overline{Q} \circ \xi^{-1} = \mu_0, \qquad \overline{Q} \circ (\xi^{-1},X_0) = \pi.
\end{align*}
Let $Y$ be the unique strong solution of the SDE
\begin{align*}
dY_t = b(t,Y)dt + \sigma d\widehat{W}_t, \qquad Y_0 = \xi.
\end{align*} 
Then $\overline{Q} \circ Y^{-1}=P$.
Recalling that $X : \Omega \to \C_T^k$ is the first coordinate map, we deduce that $\overline{Q} \circ (X,Y)^{-1}$ is a coupling of $(Q,P)$. This implies
\begin{align*}
\W_2^2(P,Q) &\le \E^{\overline{Q}}\|X-Y\|_T^2.
\end{align*}
To estimate this quantity, note for all $t \in [0,T]$ that \eqref{pf:T2-B1} and \eqref{pf:T2-B2} imply
\begin{align*}
|X_t-Y_t|^2 
&= \left| X_0 - \xi + \int_0^t\left(b(s,Y) - \alpha(s,X) - b(s,X)\right)ds\right| \\
&\le 3|X_0-\xi|^2 + 3TL^2\int_0^t\|X-Y\|_s^2ds + 3T\int_0^T|\alpha(s,X)|^2ds.
\end{align*}
By Gronwall's inequality,
\begin{align*}
\|X-Y\|_T^2 \le 3e^{3TL^2}\left(|X_0-\xi|^2 + T\int_0^T|\alpha(t,X)|^2dt\right).
\end{align*}
Recall by construction that the joint law of $(\xi,X_0)$ under $\overline{Q}$ is exactly the optimal coupling for $(\mu_0,\nu_0)$. Hence, taking expectations, we find
\begin{align*}
\W_2^2(P,Q) &\le 3e^{3TL^2}\left(\E^{\overline{Q}}|X_0-\xi|^2 + T\E^{\overline{Q}}\int_0^T|\alpha(t,X)|^2dt\right) \\
	&= 3e^{TL^2}\left(\W_2^2(P_0,Q_0) + 2T \int_{\R^k} H(Q^x\,|\,P^x)\,Q_0(dx)\right),
\end{align*}
with the last term coming from Lemma \ref{le:entropy-diffusions}(ii). Use the assumption \eqref{ap:asmp:Tineq_0} and the chain rule for relative entropy \eqref{def:chainrule} to get
\begin{align*}
\W_2^2(P,Q) &\le 3e^{3TL^2}\left(C_0H(Q_0\,|\,P_0) + 2T\int_{\R^k} H(Q^x\,|\,P^x)\,Q_0(dx)\right) \\
	&= 3(C_0 \vee 2T)e^{3TL^2} H(Q\,|\,P).
\end{align*}
\end{proof}

\bibliographystyle{amsplain}
\bibliography{biblio}

\end{document}